\documentclass[11pt]{amsart}

\usepackage[T1]{fontenc}
\usepackage[utf8]{inputenc}
\usepackage{textcomp}
\usepackage{lmodern}
\usepackage{microtype}

\usepackage{amssymb}
\usepackage{amsthm}
\usepackage{amscd}

\usepackage{hyperref}

\usepackage{mathscinet}
\usepackage[giveninits=true,doi=false,url=false,sortcites,backend=biber]{biblatex}
\addbibresource{../../Bibliographies/main.bib}

\usepackage{tikz}

\newtheorem{theorem}{Theorem}[section]

\newtheorem{lemma}[theorem]{Lemma}

\newtheorem{cor}[theorem]{Corollary}
\newtheorem{question}[theorem]{Question}

\theoremstyle{definition}
\newtheorem{definition}[theorem]{Definition}

\newtheorem{example}[theorem]{Example}


 \def \dom{\operatorname{dom}}
 




\allowdisplaybreaks[2]


\begin{document}

\title[What do Ultraproducts Remember?]{What do Ultraproducts Remember about the Original Structures?}
\author{Henry Towsner}
\date{\today}
\thanks{Partially supported by NSF grants DMS-1600263 and DMS-2054379}
\address {Department of Mathematics, University of Pennsylvania, 209 South 33rd Street, Philadelphia, PA 19104-6395, USA}
\email{htowsner@math.upenn.edu}
\urladdr{\url{http://www.math.upenn.edu/~htowsner}}

\begin{abstract}
We describe a syntactic method for taking proofs which use ultraproducts and translating them into direct, constructive proofs.
\end{abstract}

\maketitle

\section{Introduction}

Ultraproducts (and various related tools like graph limits and nonstandard analysis) have been a powerful tool in various areas \cite{MR2742582} of mathematics\footnote{Including functional analysis, especially Banach space theory \cite{MR552464,MR1342297} and operator algebras \cite{MR2006539,MR3694564}; commutative algebra \cite{MR2676525} and algebraic geometry \cite{MR2445111}; probability theory \cite{hoover:arrays, MR859372}; and combinatorics \cite{MR3583029,MR2964622,MR3012035,MR2950773,nonstandardmethodsramsey}.}.  When used in areas other than logic, an ultraproduct is usually used in the following way.
\begin{itemize}
\item We begin with a sequence of structures $\mathfrak{M}_1$, $\mathfrak{M}_2$, $\ldots$, $\mathfrak{M}_i$, $\ldots$, sometimes with the $\mathfrak{M}_i$ each finite in some sense (like cardinality, characteristic, or dimension) but with the finite value growing as $i$ does.
\item We construct an ultraproduct $\mathfrak{M}^{\mathcal{U}}$ and work with this object as in any other proof---successively proving $\mathfrak{M}^{\mathcal{U}}$ satisfies some lemma, then another lemma, and so on.
\item We finish by showing that $\mathfrak{M}^{\mathcal{U}}$ has some property which we can use to conclude something about the original structures $\mathfrak{M}_i$.  (In some cases a contradiction, thereby showing that the original sequence could not have existed.)
\end{itemize}

In such a proof, the essential properties of the ultraproduct must be determined by the original structures $\mathfrak{M}_i$.  While many specific questions about the ultraproduct might not be completely determined by the original structures (indeed, the point of the construction is that it uses an ultrafilter to resolve many questions about the ultraproduct in a coherent but arbitrary way), these questions must be inessential, because in actual applications the steps of the proof can be carried out without regard to how these additional questions are answered.

The intermediate steps of a proof using ultraproducts involve showing that the ultraproduct satisfies various properties.  In this paper we are concerned with the correspondence between these intermediate steps of a proof and the properties of the original structures.

\begin{quote}
  Suppose $\mathfrak{M}^{\mathcal{U}}$ has a property $P$.  What can we conclude about the original structures $\mathfrak{M}_i$?
\end{quote}

An answer to this question gives us a method for removing the use of ultraproducts from these proofs\footnote{Since nonstandard analysis can be formulated as an ultraproduct, this also means this method can be used to ``standardize'' nonstandard proofs.}: given a proof which uses ultraproducts, one simply replaces each step in the ultraproduct by the corresponding fact about the original structures.

As we will see, the general theme is that ultraproducts capture a certain kind of ``higher-order quantitative uniformity''.  Replacing ultraproduct arguments with standard ones requires making these uniformities explicit.  This explicitness comes at the cost of an argument which is typically much more complicated than the ultraproduct argument (as can be seen from examples, like \cite{MR3846327,MR3643744,1609.07509}, where this process has been applied).  However there are several reasons we might want these more explicit arguments anyway, for instance because they give explicit quantitative information, or because we want to see how the ultraproduct methods relate to ``classical'' methods.

When the ultraproduct $\mathfrak{M}^{\mathcal{U}}$ has a property $P$ which can be expressed in first-order logic, the corresponding property of the structures $\mathfrak{M}_i$ is given by the well-known \L o\'s Theorem---$P$ holds in $\mathfrak{M}^{\mathcal{U}}$ if and only $\{i\mid \mathfrak{M}_i\text{ has property }P\}$ belongs to the ultrafilter $\mathcal{U}$.

It follows that no substantive proof using ultraproducts will consider only properties which can be expressed in first-order logic: if it did, the ultraproduct would be entirely extraneous, since one could carry out the proof unchanged in most of the original structures.

So we are mostly interested in the case $P$ is \emph{not} a first-order property.  In most actual examples, like those cited above, the additional ingredient is quantification over the natural numbers.  (See Section \ref{sec:examples} for concrete examples of what this looks like.)  It turns out to be natural to consider a slightly more general setting: properties expressed in the logic $\mathrm{L}_{\omega_1,\omega}$, which allows arbitrary countably infinite conjunctions and disjunctions.

The sentences of $\mathrm{L}_{\omega_1,\omega}$ are often not interesting in the original structures $\mathfrak{M}_i$.  For instance, if the $\mathfrak{M}_i$ are finite sets of unbounded size, the ultraproduct $\mathfrak{M}^{\mathcal{U}}$ has the property ``this structure is infinite'', which is easily expressed in $\mathrm{L}_{\omega_1,\omega}$, but is false in all the $\mathfrak{M}_i$.

Our main project will be giving an alternate semantics for sentences of $\mathrm{L}_{\omega_1,\omega}$ which gives a meaningful interpretation in the original structures.  This will be a \emph{bounded} semantics, in which we replace infinite conjunctions and disjunctions with appropriate finite restrictions; it will then be sensible to ask when a sentence holds in an original structure in this bounded way.

The work of defining this semantics will occupy Section \ref{sec:bounded}, where we will ultimately define a notion
\[\mathfrak{M}\vDash^{\leq A,E}\sigma,\]
which will mean that the $\mathrm{L}_{\omega_1,\omega}$ sentence $\sigma$ holds in the structure $\mathfrak{M}$ \emph{subject to the bounds $A$ and $E$}.  Most of the difficulty will be establishing the right notion of what a bound is, which will become more complicated as $\sigma$ does.

We will then be able to establish our main result:
\begin{theorem}
  If $\{\mathfrak{M}_i\}_{i\in\mathbb{N}}$ is a sequence of structures, $\mathcal{U}$ is an ultrafilter, and $\sigma$ is a sentence in $\mathrm{L}_{\omega_1,\omega}$ then $\mathfrak{M}^{\mathcal{U}}\vDash\sigma$ if and only if, for every bound $A\in\mathbb{S}^\forall_\sigma$ there is a bound $E\in\mathbb{S}^\exists_\sigma$ such that
\[\{i\mid\mathfrak{M}_i\vDash^{\leq A,E}\sigma\}\in\mathcal{U}.\]
\end{theorem}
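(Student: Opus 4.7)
The plan is induction on the complexity of $\sigma$, reading the bound data game-theoretically: an element of $\mathbb{S}^\forall_\sigma$ should encode a (partial) strategy for \Abelard---instantiating universal quantifiers and selecting conjuncts of $\bigwedge_n \phi_n$---while $\mathbb{S}^\exists_\sigma$ encodes \Eloise's responses, witnessing existentials and selecting disjuncts of $\bigvee_n \phi_n$. Under this reading the theorem is the natural statement that $\mathfrak{M}^{\mathcal{U}} \vDash \sigma$ iff for each \Abelard\ play \Eloise\ has a response winning on a set of coordinates in $\mathcal{U}$.

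For atomic and negated atomic $\sigma$, the bound spaces are trivial and the claim is exactly \L o\'s's theorem. Finitary Boolean connectives pass through the induction using closure of $\mathcal{U}$ under finite intersection. For $\sigma = \exists x\,\phi(x)$, in the forward direction an ultraproduct witness $[a_i]_{\mathcal{U}}$ combined with the IH applied coordinatewise turns an \Abelard\ bound for $\sigma$ (essentially a bound $A'$ for $\phi$) into an \Eloise\ bound containing $(a_i)$ together with the IH response $E'$; the reverse direction assembles witnesses $a_i$ produced by \Eloise\ on a set in $\mathcal{U}$ into a single element of $\mathfrak{M}^{\mathcal{U}}$. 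The universal case is dual.

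The heart of the argument is the infinitary connectives, where $\sigma$-additivity of $\mathcal{U}$ is unavailable. Consider $\sigma = \bigwedge_n \phi_n$. An \Abelard\ bound $A$ should designate finitely many indices $n$ together with bounds $A_n \in \mathbb{S}^\forall_{\phi_n}$. Forward: $\mathfrak{M}^{\mathcal{U}} \vDash \phi_n$ for each such $n$, so the IH supplies $E_n$ with $\{i : \mathfrak{M}_i \vDash^{\leq A_n, E_n} \phi_n\} \in \mathcal{U}$; since only finitely many $n$ appear in $A$, the finite intersection is in $\mathcal{U}$ and the $E_n$'s package into $E$. Reverse: to get $\mathfrak{M}^{\mathcal{U}} \vDash \phi_n$ for a fixed $n$, feed the hypothesis an \Abelard\ bound that selects $n$ with any desired $A' \in \mathbb{S}^\forall_{\phi_n}$, extract the $\phi_n$-component of the returned $E$, and invoke the IH for $\phi_n$. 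The dual case $\bigvee_n \phi_n$ has a twist: in the ultraproduct some single disjunct holds, but coordinatewise different $i$'s may satisfy different disjuncts, so the \Eloise\ bound $E$ must be allowed to specify a function $i \mapsto n_i$, with $\mathcal{U}$ used to select a coherent single $n$ for the ultraproduct side (the set of $i$ witnessing each fixed $n$ partitions $\mathbb{N}$, and exactly one piece lies in $\mathcal{U}$).

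The principal obstacle will be calibrating the definitions of $\mathbb{S}^\forall_\sigma$ and $\mathbb{S}^\exists_\sigma$ from Section~\ref{sec:bounded} so that the inductive packaging actually composes at each alternation of $\bigwedge$ with $\bigvee$ (or $\forall$ with $\exists$): the bound data of an outer connective must be rich enough to hold, as subcomponents, the bound data produced by the IH for the inner subformulas, yet constrained enough that the sets in $\mathcal{U}$ we intersect remain finite in number at every stage. I expect this to be handled by presenting bounds as finite labeled subtrees of the parse tree of $\sigma$, with the reverse direction of the infinitary cases being the one requiring the most care, since it must extract information about each individual conjunct or disjunct from a single hypothesis quantified over \emph{all} \Abelard\ bounds.
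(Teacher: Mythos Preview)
Your game-theoretic picture is a reasonable heuristic, but it misreads the definition of the bound spaces in two ways that make the induction not go through as written. First, the bounds $A$ and $E$ do not carry model elements: in the paper $\forall x\psi$ satisfies $\mathbb{S}^Q_{\forall x\psi}=\mathbb{S}^Q_\psi$, and $\exists x$ is an abbreviation for $\neg\forall x\neg$. So there is no place in an \Eloise\ bound to store a sequence $(a_i)$, and the existential-witness step you describe cannot be carried out at the level of bounds. Second, and more seriously, $E$ is a single object from $\mathbb{S}^\exists_\sigma$, uniform in $i$; it cannot ``specify a function $i\mapsto n_i$'' as you suggest for $\bigvee_n\phi_n$. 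The content of the theorem is precisely that one uniform bound works on a $\mathcal{U}$-large set, and different $i$ may pick different disjuncts only \emph{within} that bound. The higher-order structure you need (for $\Pi^{\mathbb{N}}_3$ and beyond) enters through the negation case, where $\mathbb{S}^\forall_{\neg\psi}$ consists of continuous monotone functions $\mathbb{S}^\forall_\psi\to\mathbb{S}^\exists_\psi$; your outline does not address this.

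The paper's route is structurally different. The induction on $\phi$ is done once, in Theorem~\ref{thm:ctbl_sat}, which says that in any countably saturated model (in particular $\mathfrak{M}^{\mathcal{U}}$), ordinary satisfaction agrees with ``for every $A$ there is $E$ with $\mathfrak{M}\vDash^{\leq A,E}\phi$''. First-order quantifiers are handled there using saturation, not by packing witnesses into bounds. The passage to the individual $\mathfrak{M}_i$ is then a single step: Lemma~\ref{thm:fragment} produces a decisive finite fragment $(a,e)$ from $(A,E)$, Lemma~\ref{thm:fol} shows $\vDash^{\leq a,e}\phi$ is equivalent to a first-order formula $\hat\phi^{\leq a,e}$, and \L o\'s's Theorem transfers that formula between $\mathfrak{M}^{\mathcal{U}}$ and $\mathcal{U}$-many $\mathfrak{M}_i$. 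No second induction on $\phi$ is needed at the ultraproduct stage.
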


Keisler has already described a non-constructive method for relating the interpretation of an $L_{\omega_1,\omega}$ (as well as some sentences with infinitely many quantifiers) in an ultraproduct to the interpretation of related first-order sentences in the original models \cite{MR0202602}.

To obtain the constructive interpretation here, our bounded semantics is based heavily on Kohlenbach's monotone functional interpretation \cite{MR1428007}.  Similar interpretations have been given in the context of nonstandard analysis by Nelson \cite{MR938372} and by van den Berg, Briseid, and Safarik \cite{MR2964881}.  (See also \cite{MR3325571}, which gives an interpretation based on the bounded, rather than monotone, functional interpretation.) Most recent work applying these techniques has focused on questions about the logical strength of various principles in nonstandard settings, as in \cite{MR3647839,MR3682854,MR3599342}. 

Here our focus is on proofs from outside of logic: we can take a proof which uses an ultraproduct and translate the lemmas proved about that ultraproduct, one by one, into statements which are directly about the original structures.  This gives some insight into why ultraproducts  are useful in these proofs: the intermediate stages typically involve sentences $\sigma$ whose bounded form is rather complicated.  In this context it becomes clear that the power of the ultraproduct is to translate these complicated bounded statements into more conventional ones.  However---as Examples \ref{ex:convergence} and \ref{ex:regularity} will illustrate---the bounded form that emerges from the translation is sometimes interesting in its own right.  Furthermore, the bounded form typically has additional constructive\footnote{Our results in this paper might appear to contradict Sanders' claims in \cite{10.1007/978-3-319-72056-2_19}.  In that paper, Sanders discusses the ``metastability trade-off'', in which ``introducing a finite but arbitrarily large domain results in highly uniform and computable results''.  (See Example \ref{ex:convergence} for a longer discussion about the term metastability and how it relates to the construction in this paper.)  Sanders claims to ``place[] a hard limit on the generality of the metastability trade-off and show[] that metastability does not provide a 'king's road' towards computable mathematics.''  Yet that is precisely what we will do in this paper: by replacing quantification over infinite domains with the right arbitrarily large finite domains, we will obtain uniform and computable results, and we will do so in a quite general way.  The issue is an ambiguity about the correct way to generalize metastability to more complicated sentences: Sanders only considers one particular generalization, and so the limitations he finds only apply to that.  Here, as in the paper Sanders is responding to \cite{MR3278188} and elsewhere in the literature \cite{MR3643744}, we consider a different generalization based on the monotone functional interpretation.  Sanders' result shows that at least some of the complexity of this translation is necessary.} or explicit information that the ultraproduct form obscured.  We discuss examples of applications of this translation in Section \ref{sec:applications}.

\section{Notation}

When $\{\mathfrak{M}_i\}_{i\in\mathbb{N}}$ is a sequence of structures and $\mathcal{U}$ is a non-principal ultrafilter, we write $\mathfrak{M}^{\mathcal{U}}$ for the ultraproduct of this sequence.  See, for instance, \cite{MR2757537} for the details of the construction.  The crucial property is:
\begin{theorem}[\L{o}\'s Theorem]
  If $\sigma$ is a sentence of first-order logic then
\[\mathfrak{M}^{\mathcal{U}}\vDash\sigma\Leftrightarrow\{i\mid\mathfrak{M}_i\vDash\sigma\}\in\mathcal{U}.\]
\end{theorem}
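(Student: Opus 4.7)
The plan is to prove a strengthening by induction on formula complexity: for any formula $\varphi(x_1,\ldots,x_n)$ of first-order logic and any representatives $f_1,\ldots,f_n\in\prod_i M_i$,
\[\mathfrak{M}^{\mathcal{U}}\vDash\varphi([f_1],\ldots,[f_n])\iff\{i\mid\mathfrak{M}_i\vDash\varphi(f_1(i),\ldots,f_n(i))\}\in\mathcal{U}.\]
Specializing to $n=0$ recovers the theorem as stated.

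The atomic case is essentially built into the definition of the ultraproduct. Function symbols are interpreted pointwise on representatives, and a relation symbol $R$ is declared to hold of $[f_1],\ldots,[f_n]$ precisely when $\{i\mid R^{\mathfrak{M}_i}(f_1(i),\ldots,f_n(i))\}\in\mathcal{U}$; checking that this is independent of the choice of representatives uses only the filter axioms. The conjunction step likewise reduces to the fact that $A,B\in\mathcal{U}$ iff $A\cap B\in\mathcal{U}$.

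The two substantive cases are negation and the existential quantifier. For $\neg\varphi$ the key input is the ultrafilter property: for every $A\subseteq\mathbb{N}$, exactly one of $A$ and $\mathbb{N}\setminus A$ belongs to $\mathcal{U}$, which is what propagates the inductive equivalence through negation. For $\exists x\,\varphi(x,\bar y)$, one direction is easy: a witness $[g]$ in $\mathfrak{M}^{\mathcal{U}}$ pulls back, by the inductive hypothesis, to an index set in $\mathcal{U}$ on which $g(i)$ witnesses $\varphi$ pointwise, and this is contained in the set of indices where the existential holds in $\mathfrak{M}_i$. Conversely, given that $S=\{i\mid\mathfrak{M}_i\vDash\exists x\,\varphi(x,\bar f(i))\}\in\mathcal{U}$, one invokes the axiom of choice over the family of nonempty witness sets indexed by $i\in S$ to produce a single function $g$ whose class witnesses the existential in $\mathfrak{M}^{\mathcal{U}}$.

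The main obstacle is conceptual rather than technical: the two places where pointwise satisfaction is genuinely assembled into an ultraproduct fact are the ultrafilter dichotomy (for negation) and the axiom of choice (for existentials). Everything else is bookkeeping that flows directly from the definitions, and the sentence version follows immediately by specializing the inductive statement to the case of no free variables.
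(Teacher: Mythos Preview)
Your proposal is correct and is the standard proof of \L{o}\'s' Theorem by induction on formula complexity. The paper, however, does not prove this statement at all: it is stated as background with a reference to the literature (``See, for instance, \cite{MR2757537} for the details of the construction''), so there is no proof in the paper to compare against.
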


Slightly more generally, if for each $i$ we have an element $a^i\in|\mathfrak{M}_i|$, we will write $\langle a^i\rangle_{\mathcal{U}}\in\mathfrak{M}^{\mathcal{U}}$ for the corresponding element of $\mathfrak{M}^{\mathcal{U}}$.  Then the \L{o}\'s Theorem also says:
\begin{theorem}
  If $\phi$ is a first-order formula with free variables $x_1,\ldots,x_n$ then
\[\mathfrak{M}^{\mathcal{U}}\vDash\phi[\langle a^i_1\rangle_{\mathcal{U}},\ldots,\langle a^i_n\rangle_{\mathcal{U}}]\Leftrightarrow\{i\mid\mathfrak{M}_i\vDash\phi[a^i_1,\ldots,a^i_n]\}\in\mathcal{U}.\]
\end{theorem}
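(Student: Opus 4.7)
The plan is structural induction on $\phi$, together with a simpler sub-induction on terms. The term claim is that for any term $t(x_1,\ldots,x_n)$ the interpretation of $t[\langle a^i_1\rangle_{\mathcal{U}},\ldots,\langle a^i_n\rangle_{\mathcal{U}}]$ in $\mathfrak{M}^{\mathcal{U}}$ equals $\langle t[a^i_1,\ldots,a^i_n]\rangle_{\mathcal{U}}$; this is immediate from the coordinatewise-mod-$\mathcal{U}$ definition of function symbols in the ultraproduct. Combining this with the coordinatewise definition of relation symbols disposes of the atomic base case $\phi\equiv R(t_1,\ldots,t_k)$, and equality is handled analogously. For the Boolean inductive steps, negation uses the defining property of an ultrafilter ($X\notin\mathcal{U}$ iff the complement is in $\mathcal{U}$), and conjunction uses closure under finite intersection together with upward closure; disjunction then follows.

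The essential case is the existential quantifier, $\phi\equiv\exists y\,\psi(y,x_1,\ldots,x_n)$. The forward direction is easy: if $\mathfrak{M}^{\mathcal{U}}\vDash\phi[\langle a^i_1\rangle_{\mathcal{U}},\ldots,\langle a^i_n\rangle_{\mathcal{U}}]$, any witness may be written as $\langle b^i\rangle_{\mathcal{U}}$, and the induction hypothesis applied to $\psi$ produces the desired ultrafilter set. For the converse, suppose $S=\{i\mid\mathfrak{M}_i\vDash\exists y\,\psi[y,a^i_1,\ldots,a^i_n]\}\in\mathcal{U}$. Using the axiom of choice, select for each $i\in S$ some $b^i\in|\mathfrak{M}_i|$ witnessing $\psi$ in $\mathfrak{M}_i$, and set $b^i$ arbitrarily for $i\notin S$. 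Then $\{i\mid\mathfrak{M}_i\vDash\psi[b^i,a^i_1,\ldots,a^i_n]\}\supseteq S\in\mathcal{U}$, so the induction hypothesis gives $\mathfrak{M}^{\mathcal{U}}\vDash\psi[\langle b^i\rangle_{\mathcal{U}},\langle a^i_1\rangle_{\mathcal{U}},\ldots,\langle a^i_n\rangle_{\mathcal{U}}]$, and $\langle b^i\rangle_{\mathcal{U}}$ witnesses $\phi$ in $\mathfrak{M}^{\mathcal{U}}$. A universal quantifier can be reduced to this case via the negation clause.

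The main obstacle, to the extent there is one, is the backward direction of the quantifier case, where the axiom of choice is used to pick witnesses simultaneously across all indices in $S$; there is no genuinely delicate step. The sentence-only \L{o}\'s Theorem stated immediately before is recovered as the case $n=0$, so this statement is really only a notational strengthening.
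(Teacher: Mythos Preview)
Your proof is correct and is exactly the standard argument. The paper, however, does not prove this statement at all: it is stated in the notation section as a well-known fact (the parametrized form of \L{o}\'s's Theorem), with the reader referred to a textbook for details of the ultraproduct construction. So there is nothing to compare against; your inductive proof is the textbook one.
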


In this paper we will only consider sequences indexed by $\mathbb{N}$.  The notions would carry over essentially unchanged to other countable index sets; similar ideas would apply with somewhat more care to $\omega$-incomplete ultrafilters on uncountable index sets.

We define formulas of $\mathrm{L}_{\omega_1,\omega}$ to be given by:
\begin{itemize}
\item every atomic formula (i.e. every formula $Rt_1\cdots t_n$ where $R$ is an $n$-ary predicate symbol and each $t_i$ is a term) is a formula of $\mathrm{L}_{\omega_1,\omega}$,
\item if $\phi$ is a formula of $\mathrm{L}_{\omega_1,\omega}$ then so is $\neg\phi$,
\item if $\phi$ is a formula of $\mathrm{L}_{\omega_1,\omega}$ and $x$ is a variable then $\forall x\phi$ is also a formula of $\mathrm{L}_{\omega_1,\omega}$,
\item if $I$ is either $\mathbb{N}$ or $\{0,1\}$, and $x_1,\ldots,x_k$ is a finite list of variables, for each $i\in I$, $\phi_i$ is a formula of $\mathrm{L}_{\omega_1,\omega}$ such that the only variables appearing free in $\phi_i$ are from the list $x_1,\ldots,x_k$, then $\bigwedge_{i\in I}\phi_i$ is a formula of $\mathrm{L}_{\omega_1,\omega}$.
\end{itemize}
  The clause about the variables in conjunctions and disjunctions ensures that even infinite formulas have only finitely many free variables. 

We do not include the existential quantifiers $\exists x\phi$ or $\bigvee_{i\in I}\phi_i$ in the formal definition; instead we will treat these as abbreviations for $\neg\forall x\neg\phi$ and $\neg\bigwedge_{i\in I}\neg\phi_i$ respectively.  Similarly, we treat $\phi\rightarrow\psi$ as an abbreviation for $\neg(\phi\wedge\neg\psi)$.

\begin{definition}
  We define the $\Pi^{\mathbb{N}}_n$ and $\Sigma^{\mathbb{N}}_n$ formulas by induction on $n$:
  \begin{itemize}
  \item any first-order formula is both $\Pi^{\mathbb{N}}_0$ and $\Sigma^{\mathbb{N}}_0$,
  \item if each $\phi_i$ is $\Sigma^{\mathbb{N}}_n$ then $\bigwedge_i\phi_i$ is $\Pi^{\mathbb{N}}_{n+1}$, and
\item if each $\phi_i$ is $\Pi^{\mathbb{N}}_n$ then $\bigvee_i\phi_i$ is $\Sigma^{\mathbb{N}}_{n+1}$.
  \end{itemize}
\end{definition}
This definition can be extended to sentences of infinite ordinal complexity, but we will not need to consider these in this paper.

\section{Motivating Examples}\label{sec:examples}

We begin by considering examples from several areas.  These examples are not new---they are already well-understood without the more complicated methods we introduce later---but serve to motivate the techniques introduced below. 

\begin{example}[Flatness of polynomial extensions]\label{ex:flatness}

For each $i$, let $k_i$ be a field, and let $k^{\mathcal{U}}$ be an ultraproduct of these fields; to avoid non-triviality, assume $k^{\mathcal{U}}$ is infinite (so either the $k_i$ are infinite, or at least their sizes are unbounded).

  There are two natural ways to think about rings of polynomials over $k^{\mathcal{U}}$: the usual ring of polynomials $k^{\mathcal{U}}[x]$, or the ultraproduct of the rings of polynomials $k_i[x]$, often denoted $k^{\mathcal{U}}_{int}[x]$.  The latter ring is larger, since in addition to the usual polynomials, it has additional elements like ``polynomials'' of nonstandard degree.  The ring $k^{\mathcal{U}}[x]$ is a subring of $k^{\mathcal{U}}_{int}[x]$, but is not itself an ultraproduct of any sequence of rings.  (In the language of nonstandard analysis, it is \emph{external}.)

  A key fact, often used when considering such rings (for instance, see \cite{MR2676525} for many examples), is
  \begin{quote}
    $k^{\mathcal{U}}_{int}[x]$ is flat over $k^{\mathcal{U}}[x]$.
  \end{quote}
A definition of flatness will be given shortly.  (We use a single variable only to simplify the notation; the same discussion would apply, essentially unchanged, with multiple variables.)

  This is a fact about ultraproducts of the sort we wish to consider.  If we are being completely formal, our language is the language of rings with an additional predicate $\mathbf{C}$ for denoting the constants and our original structures are the rings $k_i[x]$ with the predicate $\mathbf{C}$ holding exactly of $k_i\subseteq k_i[x]$.   $k^{\mathcal{U}}_{int}[x]$ is then the ultraproduct of these structures and $k^{\mathcal{U}}\subseteq k^{\mathcal{U}}_{int}[x]$ is the subset defined by the predicate $\mathbf{C}$.

  However flatness is not expressed by a first-order formula.  The substructure $k^{\mathcal{U}}[x]$ of $k^{\mathcal{U}}_{int}[x]$ cannot be defined using a first-order formula because it requires considering polynomials of arbitrary but finite degree---that is, it requires quantification over the natural numbers.  Therefore we need to use a sentence of $\mathrm{L}_{\omega_1,\omega}$ to write down a sentence which captures flatness.

There is a formulation of flatness in terms of solutions to linear equations; the fact above is equivalent to the $\Pi^{\mathbb{N}}_2$ sentence
  \begin{quote}
    For every homogeneous linear equation in finitely many variables with coefficients from $k^{\mathcal{U}}[x]$, any solution in $k^{\mathcal{U}}_{int}[x]$ is a $k^{\mathcal{U}}_{int}[x]$-linear combination of solutions in $k^{\mathcal{U}}[x]$.
  \end{quote}

Flatness can be expressed by a formula in the form
\[\bigwedge_{n\in\mathbb{N}}\bigwedge_{d\in\mathbb{N}}\forall \{c_{i,j}\}_{i\leq n,j\leq d}\subseteq \mathbf{C}\bigvee_{m\in\mathbb{N}}\bigvee_{b\in\mathbb{N}}\theta_{m,b}(\sum_{j\leq d}c_{1,j}x^j,\ldots,\sum_{j\leq d}c_{n,j}x^j)\]
where $\theta_{m,b}(t_1,\ldots,t_n)$ is a first-order formula which says
\begin{quote}
  any solution $\sum_{i\leq n}t_ig_i=0$ to the homogeneous linear equation $\sum_{i\leq n}t_iy_i=0$ can be written as a sum of at most $m$ solutions of degree at most $b$.
\end{quote}
(Here we are writing $\forall \{c_{i,j}\}_{i\leq n,j\leq d}\subseteq k^{\mathcal{U}}$ as an abbreviation for a long series of quantifiers $\forall c_{1,1}(\mathbf{C}c_{1,1}\rightarrow \forall c_{1,2}(\mathbf{C}c_{1,2}\rightarrow\cdots))$.)

A standard property of ultraproducts---countable saturation---lets us swap the first-order and countable quantifiers in this case, so the flatness of $k^{\mathcal{U}}_{int}[x]$ over $k^{\mathcal{U}}[x]$ is equivalent to
\[\bigwedge_{n\in\mathbb{N}}\bigwedge_{d\in\mathbb{D}}\bigvee_{m\in\mathbb{N}}\bigvee_{b\in\mathbb{N}}\forall \{c_{i,j}\}_{i\leq n,j\leq d}\subseteq \mathbf{C}\ \theta_{m,b}(\sum_{j\leq d}c_{1,j}x^j,\ldots,\sum_{j\leq d}c_{n,j}x^j).\]
The \L{o}\'s Theorem does not apply to this statement, but it does apply to the inner part, so, using this fact, we can derive a purely standard consequence.
\begin{theorem}
  For each $n$ and $d$ there are $m$ and $b$ so that whenever $k$ is a field and $p_1,\ldots,p_n\in k[x]$ are polynomials of degree at most $d$, any solution to the equation $\sum_i p_iy_i=0$ is a linear combination of at most $m$ solutions consisting of polynomials of degree at most $b$.
\end{theorem}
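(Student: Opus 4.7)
The plan is a contradiction argument via the ultraproduct just assembled. Fix $n$ and $d$, and suppose no uniform $m,b$ work. Then for every pair $(m,b)\in\mathbb{N}^2$ we can select a field $k_{(m,b)}$, polynomials $p_1^{(m,b)},\ldots,p_n^{(m,b)}\in k_{(m,b)}[x]$ of degree at most $d$, and a solution $(y_1^{(m,b)},\ldots,y_n^{(m,b)})$ of $\sum_j p_j^{(m,b)}y_j=0$ that cannot be written as a linear combination of at most $m$ solutions of degree at most $b$.

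Re-index this doubly-indexed family by $\mathbb{N}$ via any bijection $i\mapsto(m_i,b_i)$ arranged so that $m_i,b_i\to\infty$, set $k_i:=k_{(m_i,b_i)}$, and form the ultraproducts $k^{\mathcal{U}}$ and $k^{\mathcal{U}}_{int}[x]$ with respect to some non-principal ultrafilter $\mathcal{U}$ on $\mathbb{N}$. Since each $p_j^{(m_i,b_i)}$ has degree at most $d$, its coefficients aggregate into an element $\tilde p_j:=\langle p_j^{(m_i,b_i)}\rangle_{\mathcal{U}}\in k^{\mathcal{U}}[x]$, while the witnessing solutions furnish $\tilde y_j:=\langle y_j^{(m_i,b_i)}\rangle_{\mathcal{U}}\in k^{\mathcal{U}}_{int}[x]$ with $\sum_j\tilde p_j\tilde y_j=0$.

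Now apply the quantifier-swapped form of flatness displayed in the excerpt (the consequence of flatness plus countable saturation) to our fixed $n$ and $d$ inside $k^{\mathcal{U}}_{int}[x]$: it produces concrete standard integers $m_0,b_0$ such that $k^{\mathcal{U}}_{int}[x]\vDash\theta_{m_0,b_0}(\tilde p_1,\ldots,\tilde p_n)$. Because $\theta_{m_0,b_0}$ is first-order, \L{o}\'s's theorem returns a set $U\in\mathcal{U}$ on which $k_i[x]\vDash\theta_{m_0,b_0}(p_1^{(m_i,b_i)},\ldots,p_n^{(m_i,b_i)})$. Pick any $i\in U$ with $m_i\geq m_0$ and $b_i\geq b_0$, which exists because $m_i,b_i\to\infty$. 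Then the solution $y^{(m_i,b_i)}$ decomposes as a linear combination of at most $m_0\leq m_i$ solutions of degree at most $b_0\leq b_i$, contradicting the way it was chosen.

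The only subtlety is arranging the counterexamples so that any single standard pair $(m_0,b_0)$ produced inside the ultraproduct is dominated by some $(m_i,b_i)$ for which a failure was hand-picked; choosing the enumeration to drive $m_i$ and $b_i$ to infinity makes this routine, so there is no further obstacle beyond keeping the bookkeeping straight.
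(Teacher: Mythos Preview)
Your argument is essentially the paper's own proof: assume failure, collect counterexample fields indexed by the putative bounds, pass to an ultraproduct, invoke the quantifier-swapped flatness statement to obtain a single standard pair $(m_0,b_0)$, transfer $\theta_{m_0,b_0}$ back by \L{o}\'s, and contradict the choice of counterexample at some index dominating $(m_0,b_0)$.

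There is one slip worth flagging. You ask for a \emph{bijection} $i\mapsto(m_i,b_i)$ from $\mathbb{N}$ to $\mathbb{N}^2$ with $m_i,b_i\to\infty$; no such bijection exists, since for each fixed $m$ there are infinitely many pairs $(m,b)$, so the first coordinate takes each value infinitely often. What you actually need (and what your last paragraph correctly identifies) is only that $\{i:m_i\ge m_0,\ b_i\ge b_0\}$ be cofinite for every $(m_0,b_0)$. The simplest repair is to drop the word ``bijection'' and take the diagonal $i\mapsto(i,i)$, which is precisely what the paper does (setting $k_m=k_{m,m}$). With that change your proof is correct and matches the paper's. The elements $\tilde y_j$ you construct are not really used---once $\theta_{m_0,b_0}$ transfers down, it applies to \emph{every} solution in $k_i[x]$, in particular to $y^{(m_i,b_i)}$---but they do no harm.
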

\begin{proof}
  Suppose not.  Then, for some $n,d$ it must be the case that for every $m,b$ there is a field $k_{m,b}$, polynomials $p_1,\ldots,p_n\in k_{m,b}[x]$ of degree at most $d$, and solutions $g_1,\ldots,g_n\in k_{m,b}[x]$ with $\sum_i p_ig_i=0$ such that there do not exist polynomials $\{h_{i,i'}\}_{i\leq n, i'\leq m}\subseteq k_{m,b}[x]$ of degree at most $b$ such that both for each $i'\leq m$, $\sum_i p_ih_{i,i'}=0$, and there exist $f_1,\ldots,f_{i'}\in k_{m,b}[x]$ so that $g_i=\sum_{i'}h_{i,i'}$ for each $i\leq n$.

  We let $k_m=k_{m,m}$ and consider any ultraproduct $k^{\mathcal{U}}$.  By assumption, there must be some $m, b$ such that
\[k^{\mathcal{U}}\vDash \forall \{c_{i,j}\}_{i\leq n,j\leq d}\subseteq \mathbf{C}\ \theta_{m,b}(\sum_{j\leq d}c_{1,j}x^j,\ldots,\sum_{j\leq d}c_{n,j}x^j),\]
 and by the \L{o}\'s Theorem, for almost every $m'$ we have 
\[k_{m',m'}\vDash \forall \{c_{i,j}\}_{i\leq n,j\leq d}\subseteq \mathbf{C}\ \theta_{m,b}(\sum_{j\leq d}c_{1,j}x^j,\ldots,\sum_{j\leq d}c_{n,j}x^j).\]
 In particular, we can find such an $m'$ with $m'\geq\max\{m,b\}$, which contradicts our assumption: since 
\[k_{m',m'}\vDash \forall \{c_{i,j}\}_{i\leq n,j\leq d}\subseteq \mathbf{C}\ \theta_{m,b}(\sum_{j\leq d}c_{1,j}x^j,\ldots,\sum_{j\leq d}c_{n,j}x^j),\]
 in particular $k_{m',m'}\vDash\theta_{m,b}(p_1,\ldots,p_n)$, so any solution $g_1,\ldots,g_n\in k_{m',m'}[x]$ \emph{is} a linear combination of at most $m$ solutions of degree at most $b$.
\end{proof}

\end{example}

This sort of equivalence is what we expect of properties expressed by $\Pi^{\mathbb{N}}_2$ formulas---we will have $\mathfrak{M}^{\mathcal{U}}\vDash \bigwedge_n\bigvee_m\phi_{n,m}$ exactly when, for each $n$ there is an $m$ such that for almost every $i$, $\mathfrak{M}_i\vDash\phi_{n,m}$.  (This is basically the content of the \emph{transfer theorem} often used in nonstandard analysis.)  That is, truth of the statement in the ultraproduct is equivalent to a certain kind of uniform truth in the original structures: we can find a bound $n$ which depends on $m$, but not on the particular structure.

\begin{example}[Topological Recurrence]
Recall van der Waerden's Theorem,
\begin{theorem}
  For any positive integers $k, r$, there is an $n$ so that whenever $[0,n]=U_0\cup\cdots\cup U_r$ is a partition of $[0,n]$, there is an $i\leq r$, an $a$, and an $s$ so that $a,a+s,a+2s,\ldots,a+(k-1)s\in U_i$.
\end{theorem}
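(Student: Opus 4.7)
The plan is to deduce van der Waerden's theorem from Furstenberg and Weiss's topological multiple recurrence theorem, using an ultraproduct as the bridge from the finitary combinatorial statement to an honest topological dynamical system on a compact metric space.

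First I would argue by contrapositive: suppose the conclusion fails for some fixed $k, r$, so for every $n$ there is a coloring $\chi_n : [0,n] \to \{0,1,\ldots,r\}$ admitting no monochromatic arithmetic progression of length $k$. Take a non-principal ultrafilter $\mathcal{U}$ on $\mathbb{N}$ and form the ultraproduct $\chi^{\mathcal{U}}$, a coloring of the nonstandard interval $[0,N]^{\mathcal{U}}$. For each fixed $k$, the property "there is no monochromatic AP of length $k$" is expressible by a first-order sentence in the language with a function symbol for the coloring, so by \L{o}\'s Theorem $\chi^{\mathcal{U}}$ also has no monochromatic AP of length $k$. Pick any nonstandard $a \in [0,N]^{\mathcal{U}}$ that lies far from both endpoints (e.g.\ $a = \langle \lfloor i/2\rfloor\rangle_{\mathcal{U}}$ when $\chi_n$ is defined on $[0,n]$ of growing length) and define $\chi : \mathbb{Z} \to \{0,\ldots,r\}$ by $\chi(m) = \chi^{\mathcal{U}}(a + m)$; this is a genuine two-sided coloring of $\mathbb{Z}$ with no monochromatic AP of length $k$.

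Next I would build a topological dynamical system. View $\chi$ as a point in the Cantor space $\{0,\ldots,r\}^{\mathbb{Z}}$ with the product topology, a compact metric space on which the shift $T$ acts by homeomorphisms. Let $X$ be the orbit closure of $\chi$ under $T$; by Zorn's lemma $X$ contains a minimal subsystem $X_0$. Invoke the Furstenberg--Weiss multiple recurrence theorem to obtain $\omega \in X_0$ which is uniformly recurrent of order $k-1$: for every $\varepsilon > 0$ there is $s \geq 1$ with $d(T^{js}\omega, \omega) < \varepsilon$ simultaneously for $j = 1,\ldots,k-1$.

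Finally I would extract the contradiction. Choosing $\varepsilon$ small enough that closeness in the product metric forces agreement of the $0$-th coordinate yields $\omega(0) = \omega(s) = \cdots = \omega((k-1)s)$, a monochromatic AP of length $k$ inside $\omega$. Because $\omega$ lies in the orbit closure of $\chi$, a finite-window approximation transfers this AP to some shift of $\chi$ itself, contradicting the construction. The main obstacle is clearly the invocation of the Furstenberg--Weiss theorem, whose own proof proceeds by induction on $k$ through iterated minimality arguments and is the genuine combinatorial heart of the matter; the rest of the argument is a transparent piece of logical scaffolding, and its role in motivating the paper is precisely to exhibit van der Waerden's theorem as the finitary shadow of a $\mathrm{L}_{\omega_1,\omega}$ statement about the ultraproduct.
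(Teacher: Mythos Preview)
Your argument is correct, but it takes a genuinely different route from the paper's sketch. The paper forms the ultraproduct of the finite structures $([0,n],S,U_0,\ldots,U_r)$ with the \emph{cyclic} successor $S(a)=a+1\bmod n$, treats the ultraproduct itself as the dynamical system (with the $U_i$ as the open cover), and applies the open-cover form of Furstenberg--Weiss directly to it. The conclusion is then the $\mathrm{L}_{\omega_1,\omega}$ sentence $\bigwedge_{k}\bigvee_{i\leq r}\exists x\bigvee_{s}\bigwedge_{j<k}(\mathbf{T}^{js}x\in\mathbf{U}_i)$ holding in the ultraproduct; the paper transfers that back via \L{o}\'s, doubling $k$ to $2k$ to handle the mod-$n$ wraparound that the cyclic shift introduces.

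You instead use the ultraproduct only as a device to manufacture a bad two-sided coloring $\chi:\mathbb{Z}\to\{0,\ldots,r\}$, then leave the ultraproduct entirely and run the classical Furstenberg--Weiss argument inside the shift space $\{0,\ldots,r\}^{\mathbb{Z}}$: orbit closure, minimal subsystem, metric multiple recurrence, and a finite-window transfer from $\omega$ back to $\chi$. This is the textbook derivation; the ultraproduct step could equally well be replaced by a K\"onig's-lemma compactness argument. (Incidentally, the pass to a minimal subsystem via Zorn is unnecessary once you are invoking Furstenberg--Weiss as a black box: any $\omega$ in the orbit closure of $\chi$ already has every finite window appearing in $\chi$.)

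What each buys: the paper's version makes the ultraproduct the protagonist---the recurrence statement is literally an $\mathrm{L}_{\omega_1,\omega}$ sentence about $\mathfrak{M}^{\mathcal{U}}$, which is exactly what the surrounding section is trying to illustrate. Your version is cleaner as a proof of van der Waerden (no wraparound bookkeeping, an explicit compact metric space), but it obscures the paper's point, since the ultraproduct plays only a cameo role that ordinary compactness would serve just as well.
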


Furstenberg and Weiss gave a topological proof \cite{MR531271} using the following theorem as a key step:
\begin{theorem}
  Let $X$ be a non-empty compact topological space and let $T:X\rightarrow X$ be a homeomorphism.  Then for any open cover $X=U_0\cup\cdots\cup U_r$ of $X$ and any $k\geq 1$, there exists $i\leq r$, an $x\in X$, and an integer $s$ so that $x$, $T^sx$, $T^{2s}x$, $\ldots$, $T^{(k-1)s}x\in U_i$.
\end{theorem}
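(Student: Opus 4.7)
The plan is to follow the Furstenberg--Weiss topological multiple recurrence argument: first reduce to a minimal subsystem, and then prove the multiple recurrence statement by induction on $k$.

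First, I would apply Zorn's lemma to the poset of non-empty closed $T$-invariant subsets of $X$, ordered by reverse inclusion, to obtain a minimal closed non-empty $T$-invariant $Y\subseteq X$. The sets $U_i\cap Y$ still cover $Y$, so at least one $V:=U_i\cap Y$ is non-empty and open in $Y$, and it suffices to produce $y\in Y$ and $s\geq 1$ so that $y, T^sy,\ldots,T^{(k-1)s}y\in V$. By minimality, for every non-empty open $V\subseteq Y$ the translates $\{T^{-n}V\}_{n\geq 0}$ cover $Y$, so by compactness some finite collection $T^{-n_1}V,\ldots,T^{-n_L}V$ already does, and return times to $V$ are syndetic.

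Second, I would prove by induction on $k$ the statement: every minimal $(Y,T)$ and non-empty open $V\subseteq Y$ admit $y\in Y$ and $s\geq 1$ with $T^{js}y\in V$ for $0\leq j\leq k-1$. The base case $k=1$ follows from Birkhoff's recurrence theorem applied to $V$. For the inductive step, I would form the product system $(Y^{k-1},\widetilde{T})$ with $\widetilde{T}=T\times T^2\times\cdots\times T^{k-1}$, using that $\widetilde{T}^s(y,\ldots,y)=(T^sy,T^{2s}y,\ldots,T^{(k-1)s}y)$. Let $Z\subseteq Y^{k-1}$ be the closure of the $\widetilde{T}$-orbit of the diagonal $\Delta$. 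Combining the inductive hypothesis (which gives rich recurrence for the shorter product $T\times\cdots\times T^{k-2}$) with a minimality reduction inside $Z$ and Birkhoff's theorem applied there, one finds a uniformly recurrent point of $Z$ arbitrarily close to $\Delta$; its return time $s$ then produces a $y\in V$ with all $T^{js}y$ lying in $V$.

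The main obstacle will be the inductive step, where one must synchronize the return behavior of $k-1$ different powers of $T$ into a single common $s$. The reduction to a minimal subsystem and the single-transformation Birkhoff step are routine; the subtlety lies in showing that the orbit closure of the diagonal in $Y^{k-1}$ contains uniformly recurrent points arbitrarily close to $\Delta$ itself, which is precisely the place where the $(k-1)$-case hypothesis must be invoked and where an equicontinuity/compactness argument is needed. Alternatively, one could carry out the induction via a Baire category argument inside $Y$, showing that for each $\varepsilon>0$ the set of $y$ admitting a simultaneous $\varepsilon$-return under $T, T^2, \ldots, T^{k-1}$ is open and dense, with the density again reducing to the inductive hypothesis.
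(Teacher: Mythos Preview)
The paper does not prove this theorem. It appears in the motivating examples section (the example on Topological Recurrence) as a known result of Furstenberg and Weiss, with a citation to their paper, and is then used as a black box: the paper's interest is in how, once this theorem is in hand, van der Waerden's Theorem follows by an ultraproduct argument and how that deduction fits the $\Pi^{\mathbb{N}}_2$ pattern the paper is illustrating. There is therefore no proof in the paper to compare your proposal against.

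Your outline is the standard Furstenberg--Weiss route: reduce to a minimal subsystem via Zorn and then induct on $k$. As you yourself flag, the entire content sits in the inductive step, and what you write there (``combining the inductive hypothesis \ldots\ with a minimality reduction inside $Z$ and Birkhoff's theorem applied there, one finds a uniformly recurrent point of $Z$ arbitrarily close to $\Delta$'') is a placeholder rather than an argument. In the actual Furstenberg--Weiss proof this step is not a single application of Birkhoff plus the induction hypothesis; it is an iterated construction (often called ``color focusing'') that repeatedly feeds the syndetic-return property of minimal systems back into the $(k-1)$-case. So the skeleton is right, but the inductive step as written is not yet a proof.
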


Formally, we work in a language with a unary function symbol $\mathbf{T}$ and $r+1$ predicate symbols, which we may as well denote $\mathbf{U}_0$, $\ldots$, $\mathbf{U}_r$.  Suppose van der Waerden's Theorem were false---for some $k, r$ there exists an $n$ and a partition $[0,n]=U_0\cup\cdots\cup U_r$ so that no $U_i$ contains an arithmetic progression of length $k$.

We view each of these examples as a finite structure $([0,n],S,U_0,\ldots,U_r)$ where $S$ is the function mapping $a$ to $a+1\mod n$.  An ultraproduct of such structures gives rise to a topological space $X$ with a partition $X=U_0\cup\cdots\cup U_r$ in which the theorem above implies
\begin{quote}
  For every $k\in\mathbb{N}$ there is some $i\leq r$, some $x\in X$, and some $s\in\mathbb{N}$ so that $x$, $T^sx$, $T^{2s}x$, $\ldots$, $T^{(k-1)s}x\in U_i$.
\end{quote}
Since $k$ and $s$ are natural numbers, this is not a statement of first-order logic, but this is a statement in $\mathrm{L}_{\omega_1,\omega}$:
\[\bigwedge_{k\in\mathbb{N}}\bigvee_{i\leq r}\exists x\bigvee_{s\in\mathbb{N}} (x\in \mathbf{U}_i)\wedge (\mathbf{T}^sx\in \mathbf{U}_i)\wedge\cdots\wedge (\mathbf{T}^{(k-1)s}x\in \mathbf{U}_i).\]
In particular, there must be some $i\leq r$, some $x\in X$, and some $s\in\mathbb{N}$ so that $T^{js}\in U_i$ for all $j\in[0,2k-1]$.  Choosing $n\geq (2k-1)s$, the \L{o}\'s Theorem implies that there is an $x\in[0,n]$ so that $x+js\mod n\in U_i$ for all $j\in[0,2k-1]$.  In particular, either $x,x+s,\ldots,x+(k-1)s\in U_i$ or (if $x+(k-1)s>n$), $x+(k-1)s\mod n,x+ks\mod n,\ldots,x+(2k-1)s\mod n\in U_i$.
\end{example}

In this example, the property we were concerned with was again expressed by a $\Pi^{\mathbb{N}}_2$ formula.

\begin{example}[Convergence]\label{ex:convergence}
  Examples involving convergence of sequences occur both in applications involving ergodic theory (as in \cite{tao08Norm}) and functional analysis (as in \cite{towsner:banach}).  To avoid issues of measurability, consider a pure metric space, so the language includes a distance function.  More precisely, to stay in the realm of first-order languages, suppose we have a language containing countably many binary predicates $\mathbf{d}_{<1/n}$ and countably many constant symbols $\mathbf{c}_k$.

  Suppose we have a collection of metric spaces $(X_i,d_i)$ of diameter $1$ and, in each of these spaces, a sequence $\{a^i_k\}$.  We can interpret these metric spaces as structures in our language: $|\mathfrak{M}_i|=X_i$, $\mathbf{d}^{\mathfrak{M}_i}_{<1/n}(x,y)$ holds exactly when $d_i(x,y)<1/n$, and $\mathbf{c}_k^{\mathfrak{M}_i}=a^i_k$.

  The ultraproduct $\mathfrak{M}^{\mathcal{U}}$ gives rise to a pseudo-metric space where we define $d(x,y)=\sup\{1/n\mid \mathbf{d}_{<1/n}^{\mathfrak{M}^{\mathcal{U}}}(x,y)\}$.  (We could obtain a proper metric space by taking a quotient by the equivalence relation $x\sim y$ iff $d(x,y)=0$.)  We have a sequence $a_k=\mathbf{c}^{\mathfrak{M}^{\mathcal{U}}}_k$ in this metric space.  (That is, $a_k=\langle a^i_k\rangle_{\mathcal{U}}$.)

  The statement that the sequence $a_k$ converges is not first-order, but can once again be expressed in $\mathrm{L}_{\omega_1,\omega}$:
\[\bigwedge_{n\in\mathbb{N}}\bigvee_{k\in\mathbb{N}}\bigwedge_{k'\in\mathbb{N}} k'\geq k\rightarrow \mathbf{d}_{<1/n}(\mathbf{c}_k,\mathbf{c}_{k'}).\]

This is $\Pi^{\mathbb{N}}_3$, not $\Pi^{\mathbb{N}}_2$, and the behavior is more subtle.  Unlike the previous examples, convergence of $\langle a_k\rangle$ does not imply that most of the sequences $\langle a^i_k\rangle$ converge.  For instance, if each $X_i=\{0,1\}$ with $d(0,1)=1$ and the sequences are given by $a^i_k=\left\{\begin{array}{ll}1&\text{if }k<i\text{ or }k\text{ is even}\\0&\text{otherwise}\end{array}\right.$ then the sequence $\langle a^i_k\rangle$ do not converge for any $i$, but the ultraproduct $X=\{0,1\}$ and $a_k$ is constantly equal to $1$, so the sequence $\langle a_k\rangle$ converges trivially.

Instead, convergence of $\langle a_k\rangle$ is equivalent to a more complicated property---the ``uniform metastable convergence''---of the $\langle a^i_k\rangle$ (\cite{tao08Norm,MR2550151, MR2144170, MR2130066}).  The relationship between metastable convergence and ultraproducts was studied explicitly in \cite{MR3141811,iovino_duenez}.

\begin{theorem}
  The sequence given by $a_k=\mathbf{c}^{\mathfrak{M}^{\mathcal{U}}}_k$ converges in $\mathfrak{M}^{\mathcal{U}}$ if and only if for every $\epsilon>0$ and every $F:\mathbb{N}\rightarrow\mathbb{N}$ there is an $m\in\mathbb{N}$ such that for $\mathcal{U}$-almost every $i$, $d_i(a^i_m,a^i_{\max\{m,F(m)\}})<\epsilon$.
\end{theorem}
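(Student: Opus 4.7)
The plan is to prove both directions by unpacking the $\Pi^{\mathbb{N}}_3$ sentence expressing convergence of $\langle a_k\rangle$ in $\mathfrak{M}^{\mathcal{U}}$ and applying \L{o}\'s at its innermost first-order layer. The key observation is that the function $F$ in the metastable statement is exactly the Skolem witness for the existential $k'\geq k$ inside the convergence sentence, so the translation is very direct.

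For the forward direction, assume $\langle a_k\rangle$ converges in $\mathfrak{M}^{\mathcal{U}}$ and fix $\epsilon>0$ and $F\colon\mathbb{N}\to\mathbb{N}$. Pick $n$ with $1/n<\epsilon$. Convergence provides some $m\in\mathbb{N}$ such that $\mathfrak{M}^{\mathcal{U}}\vDash \mathbf{d}_{<1/n}(\mathbf{c}_m,\mathbf{c}_{k'})$ for every $k'\geq m$; in particular for $k'=\max\{m,F(m)\}$. Since $m$, $n$, and $\max\{m,F(m)\}$ are all standard integers, this is a first-order sentence, so \L{o}\'s gives $\{i\mid d_i(a^i_m,a^i_{\max\{m,F(m)\}})<1/n\}\in\mathcal{U}$, and a fortiori $\{i\mid d_i(a^i_m,a^i_{\max\{m,F(m)\}})<\epsilon\}\in\mathcal{U}$.

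For the backward direction, I would argue by contrapositive. Suppose the sequence does not converge in $\mathfrak{M}^{\mathcal{U}}$; then there exists $n$ such that for every $m$ there is $k'\geq m$ with $\mathfrak{M}^{\mathcal{U}}\vDash\neg\mathbf{d}_{<1/n}(\mathbf{c}_m,\mathbf{c}_{k'})$. Using countable choice in the meta-theory, pick such a $k'$ for each $m$ and call it $F(m)$, and set $\epsilon=1/n$. Since $F(m)\geq m$, we have $\max\{m,F(m)\}=F(m)$, and applying \L{o}\'s to the first-order sentence $\neg\mathbf{d}_{<1/n}(\mathbf{c}_m,\mathbf{c}_{F(m)})$ shows $\{i\mid d_i(a^i_m,a^i_{F(m)})\geq\epsilon\}\in\mathcal{U}$ for every $m$. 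Hence no $m$ witnesses metastability for this $\epsilon$ and $F$, contradicting the metastability hypothesis.

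The main obstacle is conceptual rather than technical: one must recognize that when the universal quantifiers $\forall \epsilon\forall F$ in the metastable statement are absorbed into the meta-theory and paired (via Skolemization) with the innermost first-order layer, the resulting statement is exactly the kind to which \L{o}\'s applies. Routine care is needed in the encoding of the pseudo-metric by the predicates $\mathbf{d}_{<1/n}$---in particular passing between $d_i<\epsilon$ and $\mathbf{d}_{<1/n}$ for a suitably chosen $n$---but no countable saturation or deeper structural information about $\mathfrak{M}^{\mathcal{U}}$ is required.
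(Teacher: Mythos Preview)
Your proof is correct and follows essentially the same route as the paper's: both directions reduce to applying \L{o}\'s to the first-order formula $\mathbf{d}_{<1/n}(\mathbf{c}_m,\mathbf{c}_{k'})$ after choosing the appropriate $n$, $m$, and $F$. The only cosmetic difference is that you prove the forward implication directly while the paper does it by contrapositive (assuming metastability fails and deriving non-convergence), but the underlying argument is identical.
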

\begin{proof}
  Suppose the second statement is false, so there is an $\epsilon>0$ and an $F$ such that, for every $m\in\mathbb{N}$, $\{i\mid d_i(a^i_m,a^i_{\max\{m,F(m)\}})\geq\epsilon\}$.  Replacing $F$ with $F'(m)=\max\{m,F(m)\}$, we may assume $m\leq F(m)$ for all $m$.

  Pick $n$ with $1/n<\epsilon$.  Then, for every $m$, $\{i\mid \mathfrak{M}_i\vDash \neg \mathbf{d}_{<1/n}(\mathbf{c}_m,\mathbf{c}_{F(m)})\}\in\mathcal{U}$, so the \L{o}\'s Theorem implies that $\mathfrak{M}^{\mathcal{U}}\vDash \neg \mathbf{d}_{<1/n}(\mathbf{c}_m,\mathbf{c}_{F(m)})\}$, and therefore, for every $m$, $d(a_m,a_{F(m)})\geq 1/n$.  Clearly $F(m)>m$ (since $d(a_m,a_m)=0$), so for every $m$ there is an $m'>m$ with $d(a_m,a_{m'})\geq 1/n$, so the sequence $a_k$ does not converge.

Conversely, suppose the sequence $\langle a_k\rangle$ does not converge, so for some $\epsilon>0$ and every $m$, there is an $m'>m$ with $d(a_m,a_{m'})>\epsilon$.  Choose $n$ with $1/n<\epsilon$ and, for each $m$, define $F(m)$ to be the least $m'>m$ such that $d(a_m,a_{m'})>1/n$.  Then, for each $m$, $\mathfrak{M}^{\mathcal{U}}\vDash \neg \mathbf{d}_{<1/n}(\mathbf{c}_m,\mathbf{c}_{F(m)})$, so $\{i\mid \mathfrak{M}^{\mathcal{U}}\vDash \neg \mathbf{d}_{<1/n}(\mathbf{c}_m,\mathbf{c}_{F(m)})\}\in\mathcal{U}$, so $\{i\mid d_i(a^i_m,a^i_{\max\{m,F(m)\}})\}\in\mathcal{U}$.
\end{proof}
\end{example}

\section{A Bounded Semantics}\label{sec:bounded}

\subsection{Uniformity}

Our main goal is define a ``bounded semantics'' $\mathfrak{M}\vDash^{\leq A,E}\phi[\vec b]$ when $\phi$ is a formula of $\mathrm{L}_{\omega_1,\omega}$.  Our main theorem (specailized to sentences for notational simplicity) will say
\begin{quote}
When $\phi$ is a sentence of $\mathrm{L}_{\omega_1,\omega}$, $\mathfrak{M}^{\mathcal{U}}\vDash\phi$ if and only if, for each $A$, there is an $E$ so that $\{i\mid\mathfrak{M}_i\vDash^{\leq A,E}\phi\}\in\mathcal{U}$.
\end{quote}
That is, $\phi$ is true in the ultraproduct when $\phi$ is ``uniformly'' true in the structures $\mathfrak{M}_i$---when, for each $A$, there is a single $E$ witnessing the truth of $\phi$ which works for almost all $i$.

We can think of this semantics as coming from a game: given a sentence $\phi$ of first-order logic and a structure $\mathfrak{M}$, we have two players, \textsc{Prover} and \textsc{Refuter}, playing a game to determine whether $\phi$ is true.

The first motivating case is when $\phi$ is $\Pi_2^{\mathbb{N}}$---that is, $\phi$ has the form $\bigwedge_{n\in\mathbb{N}}\bigvee_{m\in\mathbb{N}}\phi_{n,m}$ where each $\phi_{n,m}$ is a first-order sentence.  In this case, we will define
\[\mathfrak{M}\vDash^{\leq n,m}\bigwedge_{n\in\mathbb{N}}\bigvee_{m\in\mathbb{N}}\phi_{n,m}\text{ if and only if }\mathfrak{M}\vDash\phi_{n,m}.\]
In the corresponding game, \textsc{Refuter} first chooses an $n$, and then \textsc{Prover} chooses a $m$; \textsc{Prover} wins if $\phi_{n,m}$ is true in $\mathfrak{M}$.  That means that \textsc{Refuter} has a winning strategy if there is some $n$ so that, for all $m$, $\phi_{n,m}$ is false in $\mathfrak{M}$, while \textsc{Prover} has a winning strategy if, for every $n$, there is some $m$ so that $\phi_{n,m}$ is true in $\mathfrak{M}$.

So far this is just a restatement of the usual meaning of the quantifiers.  Note that we are treating truth of first-order sentences (even complex ones) as ``atomic''---once the moves break $\phi$ down to a first-order component $\phi_{n,m}$, we simply inquire whether it is true.\footnote{The formal inductive definition will be superficially more complicated in this case, but it will still be true that the substantive steps take place only in formulas with countable conjections or disjunctions.}

For a $\Pi_3^{\mathbb{N}}$ sentence $\bigwedge_{n\in\mathbb{N}}\bigvee_{m\in\mathbb{N}}\bigwedge_{k\in\mathbb{N}}\phi_{n,m,k}$, one might expect the game where first \textsc{Refuter} chooses an $n$, then \textsc{Prover} chooses a $m$, and then \textsc{Refuter} chooses a $k$.  This turns out not to be the right interpretation for our purpose: we need to make the strategies more explicit.  In the context of a $\Pi_3^{\mathbb{N}}$ sentence, that means this first attempt at a game is unfair to \textsc{Prover}: \textsc{Refuter} is allowed to look at $m$ before choosing $k$, but \textsc{Prover} isn't being allowed to look at \textsc{Refuter}'s \emph{strategy} for choosing $k$ before choosing $m$.

The right game for $\bigwedge_{n\in\mathbb{N}}\bigvee_{m\in\mathbb{N}}\bigwedge_{k\in\mathbb{N}}\phi_{n,m,k}$, for our purposes, is the game in which \textsc{Refuter} first chooses \emph{both} an $n$ and a ``strategy''.  \textsc{Prover} is allowed to look at, not only the witness $m$, but the function $K$, and is only required to choose a $m$ so that $\phi_{n,m,K(m)}$ is true.\footnote{Actually, in order to make the inductive step work, it turns out to be cleaner to ask $K(m)$ to be a \emph{bound} rather than an exact value---we ultimately ask \textsc{Prover} to choose a $m$ so that $\phi_{n,m,k}$ is true for every $k\leq K(m)$.}

This corresponds to saying
\[\mathfrak{M}\vDash^{(n,K),m}\bigwedge_{n\in\mathbb{N}}\bigvee_{m\in\mathbb{N}}\bigwedge_{k\in\mathbb{N}}\phi_{n,m,k}\text{ if and only if }\mathfrak{M}\vDash\phi_{n,m,K(m)}.\]

In fact,  this game is equivalent to the first attempt: certainly, if \textsc{Prover} has a winning strategy in the first game, this same strategy works in the second game---\textsc{Prover} ignores $K$, plays the winning $m$, and would win against any value of $k$, so certainly wins against the play $K(j)$.  Conversely, if \textsc{Refuter} has a winning strategy in the first game, that means there is an $n$ so that, for every $m$, there is a $k$ which is a winning move for \textsc{Refuter} (that is, so that $\phi_{n,m,k}$ is false); then the function $K$ which, given $m$, outputs $k$, is a winning strategy for the second version of the game.

We are ultimately interested, not in when \textsc{Prover} can win in a single structure, but when \textsc{Prover} can win \emph{uniformly}: \textsc{Prover} is supposed to produce a single $m$ depending only on \textsc{Refuter}'s moves, which works in many different structures $\mathfrak{M}_i$.  This makes the game harder for \textsc{Prover}, so we balance that by allowing \textsc{Prover} to get more information---the choice of $m$ can depend on $K$.

In general, we will define our notion of strategies by induction on the formula $\phi$.  For each $\phi$, we will have two sets---$\mathbb{S}^\forall_\phi$ and $\mathbb{S}^\exists_\phi$---representing \textsc{Refuter}'s and \textsc{Prover}'s strategies, respectively.

For instance, suppose $\phi$ is $\psi_0\wedge\psi_1$, and we have already identified the sets $\mathbb{S}^\forall_{\psi_b}$ and $\mathbb{S}^\exists_{\psi_b}$.  Then we would expect $\mathbb{S}^\forall_{\phi}$ to be something like
\[(\{0\}\times\mathbb{S}^\forall_{\psi_0})\cup(\{1\}\times\mathbb{S}^\exists_{\psi_1}).\]
That is, \textsc{Refuter}'s move is to specify \emph{both} a choice $b\in\{0,1\}$ and a strategy for the chosen $\psi_b$.  Similarly, $\mathbb{S}^\exists_\phi$ would be something like $\mathbb{S}^\exists_{\psi_0}\cup\mathbb{S}^\exists_{\psi_1}$---\textsc{Prover} must respond to whichever $\psi_b$ \textsc{Refuter} has chosen with an appropriate strategy.

The case which leads to most of the complication is negation.  (Recall that $\bigvee_{m\in\mathbb{N}}$ is an abbreviation for $\neg\bigwedge_{m\in\mathbb{N}}\neg$, so this case already emerged, implicitly, in the examples above.)  Suppose $\phi$ is $\neg\psi$, and we have already identified the sets $\mathbb{S}^\forall_\psi$ and $\mathbb{S}^\exists_\psi$.  We need \textsc{Refuter} and \textsc{Prover} to switch roles; a first attempt would be to simply swap them: in the game for $\phi$, \textsc{Prover} first plays a strategy from $\mathbb{S}^\forall_\psi$, and then \textsc{Refuter} responds with a strategy from $\mathbb{S}^\exists_\psi$.

Once again, it is too difficult for \textsc{Prover} to win this game uniformly, so we need to give \textsc{Prover} more information.  So, before \textsc{Prover} chooses a strategy, \textsc{Refuter} has to declare a strategy: \textsc{Refuter} has to play a \emph{function} $F$ from $\mathbb{S}^\forall_\psi$ to $\mathbb{S}^\exists_\psi$, and \textsc{Prover} is then required to play an element $A$ of $\mathbb{S}^\forall_\psi$ with the property that, in the game corresponding to $\psi$, $A$ defeats the particular strategy $F(A)$. Moreover, \textsc{Refuter}'s strategy needs to be continuous in a suitable sense---approximations to $F(A)$ should be determined by approximations to $A$.

Consider the simplest case where this comes up, where $\phi$ has the form $\neg\bigvee_{n\in\mathbb{N}}\bigwedge_{m\in\mathbb{N}}\phi_{n,m}$.  Let us write $\psi$ for the sentence $\bigvee_{n\in\mathbb{N}}\bigwedge_{m\in\mathbb{N}}\phi_{n,m}$, so $\phi$ is $\neg\psi$.  This is slightly simpler than $\Pi^{\mathbb{N}}_3$ case above: \textsc{Refuter}'s strategy for $\psi$ is to play a function $M$, and \textsc{Prover}'s strategy for $\psi$ is to play a number $n$; \textsc{Prover} wins if $\phi_{n,M(n)}$ is true.  That is, $\mathbb{S}^\forall_{\psi}=\mathbb{N}^{\mathbb{N}}$ and $\mathbb{S}^\exists_\psi=\mathbb{N}$.

So a \textsc{Refuter} strategy for $\phi$ should be a ``functional'': a function $\mathcal{F}:\mathbb{N}^{\mathbb{N}}\rightarrow\mathbb{N}$.  \textsc{Prover} responds with an element of $\mathbb{S}^\forall_\psi$---a function $M$---and then \textsc{Prover} wins if $\phi_{F(M),M(F(M))}$ is false.

We need to break down the way that \textsc{Refuter}'s strategy for $\phi$ should be continuous; let us follow this back to thinking about what a winning \textsc{Prover} strategy for $\psi$ looks like.  A winning \textsc{Prover} strategy for $\psi$ responds to a function $M$ with a number $n$; having identified the winning response $n$, it doesn't matter what $M$ does on other input---if $M'$ is a different function with $M'(n)=M(n)$, $n$ is a winning play against $M'$, as well.

We might imagine that \textsc{Prover} finds the winning response $n$ by simulating some interaction with $M$: first \textsc{Prover} tries $n_0$ and, if $\phi_{n_0,M(n_0)}$ is false, tries a second value $n_1$, and so on, until finding the winning play $n=n_k$.  (This is, in fact, what winning strategies for \textsc{Prover} will look like.)  Any $M'$ which agrees with $M$ on the finitely many values $n_0,\ldots,n_k$ will lead to \textsc{Prover} identifying the same winning response $n$.

So when we apply a negation and consider \textsc{Refuter}'s strategy for $\phi$, the function $F$ should have the same property: $F(M)$ depends only on $M(n)$ for finitely many values of $n$.  That is:
\begin{quote}
 whenever $F(M)=n$, there are finitely many values $n_0,\ldots,n_k$ such that whenever $M'(n_i)=M(n_i)$ for all $i\leq k$, $F(M')=n$.
\end{quote}
This is precisely saying that $F$ is a continuous function if the topology on $\mathbb{N}$ is discrete and the topology on $\mathbb{N}^{\mathbb{N}}$ is the one basic open sets have the form $\{M\mid M(n_0)=m_0,\ldots,M(n_k)=m_k\}$ for finitely many pairs $(n_0,m_0),\ldots,(n_k,m_k)$.

\subsection{Continuity}

The ideas in the previous subsection are implemented in Definition \ref{def:strategies}, where we will define, for each formula $\mathrm{L}_{\omega_1,\omega}$ formula $\phi$, two sets $\mathbb{S}^\forall_\phi$ and $\mathbb{S}^\exists_\phi$ such that:
\begin{itemize}
\item when $\phi$ is atomic, $\mathbb{S}^\forall_\phi$ and $\mathbb{S}^\exists_\phi$ are sets with a single point $\{\ast\}$,
\item when $\phi$ is $\forall x\,\psi$, $\mathbb{S}^\forall_\phi=\mathbb{S}^\forall_\psi$ and $\mathbb{S}^\exists_\phi=\mathbb{S}^\exists_\psi$,
\item when $\phi$ is $\psi_0\wedge\psi_1$,
  \begin{itemize}
  \item $\mathbb{S}^\forall_\phi=(\{0\}\times\mathbb{S}^\forall_{\psi_0})\cup(\{1\}\times\mathbb{S}^\exists_{\psi_1})$,
  \item $\mathbb{S}^\exists_\phi=\mathbb{S}^\exists_{\psi_0}\cup\mathbb{S}^\exists_{\psi_1}$,
  \end{itemize}
\item when $\phi$ is $\bigwedge_{i\in\mathbb{N}}\psi_i$ then:
  \begin{itemize}
  \item $\mathbb{S}^\forall_\phi\subseteq\bigcup_{\emptyset\subsetneq J\subseteq_{\mathrm{fin}}I}\prod_{j\in J}\mathbb{S}^\forall_{\psi_j}$,
  \item $\mathbb{S}^\exists_\phi=\bigcup_{J\subseteq_{\mathrm{fin}}I}\prod_{j\in J}\mathbb{S}^\exists_{\psi_j}$,
  \end{itemize}
\item when $\phi$ is $\neg\psi$,
  \begin{itemize}
  \item $\mathbb{S}^\forall_\phi\subseteq(\mathbb{S}^\exists_\psi)^{\mathbb{S}^\forall_\psi}$,
  \item $\mathbb{S}^\exists_\phi=\mathbb{S}^\forall_\psi$.
  \end{itemize}
\end{itemize}

When $\phi$ is $\bigwedge_{i\in \mathbb{N}}\psi_i$, \textsc{Refuter}'s strategies consist of a finite set $J$ and, for each $j\in J$, a strategy for refuting $\mathbb{S}^\forall_{\psi_j}$.  This will turn out to mean that \textsc{Refuter} plans to refute $\psi_j$ for \emph{some} $j\in J$, but may not have decided which yet---indeed, it could be that the specfic choice of $j$ for which \textsc{Refuter} wins depends on what strategy \textsc{Prover} chooses.  Therefore \textsc{Prover} needs to respond with a winning strategy for every $j\in J$.\footnote{The exact definition of the $\bigwedge$ cases used below is not quite the same.  First, the definitions are tweaked so that they can be treated as a single case, and second, $\mathbb{S}^\exists_{\bigwedge_i \psi_i}$ is modified so that \textsc{Prover} has the option of choosing a single strategy for all $i$ simultaneously, which has the effect of simplifying the structure of the witnesses in certain cases.}

In order to define continuity, we will need a suitable topology on the spaces $\mathbb{S}^Q_\phi$: we will define a set of ``partial strategies'' $\mathbb{P}^Q_\phi$, and take $\mathbb{S}^Q_\phi$ to be the limit points of $\mathbb{P}^Q_\phi$.

Consider the representative case where $\mathbb{S}^Q_\phi$ consists of functions from $\mathbb{N}$ to $\mathbb{N}$; then elements of $\mathbb{P}^Q_\phi$ will be finite partial functions from $\mathbb{N}$ to $\mathbb{N}$.  As is conventional, when $f,g\in\mathbb{P}^Q_\phi$, we write $f\subseteq g$ when $g$ extends $f$, and when $F\in\mathbb{S}^Q_\phi$ we can write $f\subseteq F$ for the same.

Motivated by this, for every $\phi$ and $Q$ we will have an \emph{information ordering}, $\subseteq$, on the sets $\mathbb{P}^Q_\phi$: when $f,g\in\mathbb{P}^Q_\phi$, we write $f\subseteq g$ to mean ``$g$ is consistent with and gives more information than $f$''.  For instance, if we are considering partial functions $f,g$ approximating $(\mathbb{N}^{\mathbb{N}})^{\mathbb{N}}$, we have $f\subseteq g$ if:
\begin{itemize}
\item $\dom(f)\subseteq\dom(g)$, and
\item whenever $a\in\dom(f)$, $f(a)\subseteq g(a)$.
\end{itemize}
That is, $g$ can extend $f$ by extending the domain of definition, but also by extending the output. (As the notion suggests, it's generally true that $f\subseteq g$ also means that, encoded as sets, $f$ is literally a subset of $g$.)

In Definition \ref{def_P_sets}, we define the collections of partial strategies $\mathbb{P}_\phi^Q$ inductively.  We throw away certain elements which are internally inconsistent, and in Definition \ref{def_F_sets} we define $\mathbb{F}^Q_\phi\subseteq\mathbb{P}^Q_\phi$ to be those partial strategies which are internally consistent.

For example, suppose $\mathbb{P}^Q_\phi$ consists of partial functions approximating $\mathbb{N}^{\mathbb{N}^{\mathbb{N}}}$; then some $f\in\mathbb{P}^Q_\phi$ might have the property that whenever $g(1)=3$, $f(g)=3$ as well. Some $f'$ 

We will say that $f,g\in\mathbb{P}^Q_\phi$ are ``coherent'' if they have a common extension---if there is an $h\in\mathbb{F}^Q_\phi$ so that $f\subseteq h$ and $g\subseteq h$.  The elements of $\mathbb{S}^Q_\phi$ will then correspond to maximal coherent sets: whenever $\mathcal{C}\subseteq\mathbb{F}^Q_\phi$ is a maximal coherent set, there will be an $F=\bigcup\mathcal{C}\in\mathbb{S}^Q_\phi$.  When $\mathbb{S}^Q_\phi=\mathbb{N}^{\mathbb{N}}$, this works in the way we might expect: given a chain of partial functions $f_1\subseteq f_2\subseteq f_3\subseteq\cdots$ so that $\bigcup_n\dom(f_n)=\mathbb{N}$, the union $\bigcup_n f_n$ is a total function belonging to $\mathbb{S}^Q_\phi$.  More generally, $f_n(a)$ might itself be a partial function, so we need to ensure not only that the union of the domains is the entire domain, but that each $\bigcup_n f_n(a)$ is itself total.

At this point we can define our topology: for each $f\in\mathbb{F}^Q_\phi$, we have the open ball $B_f\subseteq\mathbb{S}^Q_\phi$ consisting of those $F$ such that $f\subseteq F$.  Our definitions will ensure that $\mathbb{S}^\forall_{\neg\psi}$ contains only continuous functions from $\mathbb{S}^\forall_\psi$ to $\mathbb{S}^\exists_\psi$.

We also need the definition of $\mathfrak{M}\vDash^{A,E}\phi$ to be continuous.  That is, we should have the property:
\begin{quote}
  given $\mathfrak{M}$, for every $(A,E)$ there should be an $a\subseteq A$ and an $e\subseteq E$ such that:
  \begin{itemize}
  \item if $\mathfrak{M}\vDash^{\leq A,E}\phi$ then whenever $(A',E')\in B_a\times B_e$, $\mathfrak{M}\vDash^{\leq A',E'}\phi$,
  \item if $\mathfrak{M}\not\vDash^{\leq A,E}\phi$ then whenever $(A',E')\in B_a\times B_e$, $\mathfrak{M}\not\vDash^{\leq A',E'}\phi$.
  \end{itemize}
\end{quote}
This will, in effect, be shown in Lemma \ref{thm:fragment}.  Given the total strategies $A$ and $E$, we can imagine playing out the game between \textsc{Prover} and \textsc{Refuter}; because the strategies are continuous, the outcome of the game will depend on only a finite portion of the strategies, so we can take $a$ and $e$ to be large enough fragments to determine the outcome.

\subsection{Finite Fragments}

We first define the sets $\mathbb{P}^\forall_\phi$ and $\mathbb{P}^\exists_\phi$ for each formula $\phi$.  These sets represent our basic building blocks: for $Q\in\{\forall,\exists\}$, the elements of $\mathbb{P}^Q_\phi$ are ``finite fragments'' of data about a bound on $\phi$.  Since most of our bounds will ultimately be functions of various kinds, the elements of $\mathbb{P}^Q_\phi$ are mostly partial functions.

  We will have two relations on these sets.  We will say $f\subseteq g$ if $g$ is a larger finite fragment than $f$ (that is, if $g$ agrees with $f$ and also might provide additional information); for instance, when $f$ and $g$ are partial functions, $f\subseteq g$ will hold when $g$ is literally an extension of $f$.  We will say $f\leq g$ if $g$ provides larger bounds; when $f$ and $g$ are natural numbers, this will be the usual ordering, and when $f$ and $g$ are partial functions this will generally mean that $f(i)\leq g(i)$ for all $i$ in some set.

\begin{definition}\label{def_P_sets}
  We define $\mathbb{P}^\forall_\phi$ and $\mathbb{P}^\exists_\phi$ by induction on $\phi$:
  \begin{itemize}
  \item When $\phi$ is atomic, $\mathbb{P}^\forall_\phi=\mathbb{P}^\exists_\phi$ is the set with a single point denoted $\ast$.
  \item When $\phi$ is $\neg\psi$, we define:
    \begin{itemize}
    \item $\mathbb{P}^\exists_\phi=\mathbb{P}^\forall_\psi$,
    \item $\mathbb{P}^\forall_\phi$ consists of partial functions $f$ from $\mathbb{P}^\exists_\phi=\mathbb{P}^\forall_\psi$ to $\mathbb{P}^\exists_\psi$ such that:
      \begin{itemize}
      \item the domain of $f$ is finite,
      \item if $a\in\dom(f)$, $a'\subseteq a$, and $a'\in\dom(f)$ then $f(a')\subseteq f(a)$,
      \item if $a\in\dom(f)$ then, for all $a'\leq a$, $a'\in\dom(f)$ and there is a $b\subseteq f(a')$ with $b\leq f(a)$,
      \end{itemize}
    \item The extension on $\mathbb{P}^\forall_\phi$ is given by $f\subseteq g$ if $\dom(f)\subseteq \dom(g)$ and, for all $a\in\dom(f)$, $f(a)\subseteq g(a)$.
    \item The ordering on $\mathbb{P}^\forall_\phi$ is given by $f\leq g$ if $\dom(f)=\dom(g)$ and for all $a\in\dom(f)$, $f(a)\leq g(a)$.
    \end{itemize}
  \item When $\phi$ is $\forall x\psi$ we define:
    \begin{itemize}
    \item $\mathbb{P}^\forall_\phi=\mathbb{P}^\forall_\psi$ and $\mathbb{P}^\exists_\phi=\mathbb{P}^\exists_\psi$,
    \item the orderings $\leq$ and $\subseteq$ are the same as for $\psi$.
    \end{itemize}
  \item When $\phi$ is $\bigwedge_{i\in I}\psi_i$ (where $I=\{0,1\}$ or $I=\mathbb{N}$), we define:\footnote{The interaction of $\subseteq$ and $\leq$ in the definition of $\mathbb{P}^\forall_\phi$ be may surprising.  A first attempt at an element of $\mathbb{P}^\forall_\phi$ would be a pair $(n,a)$ where $a\in\mathbb{P}^{\forall}_{\psi_n}$; importantly, $n$ is the ``bound'' corresponding to $\bigwedge$.  In order to get the necessary monotonicity properties, this needs to be extended to a pair $(n,\{a_i\}_{i\leq n})$ where each $a_i\in\mathbb{P}^\forall_{\psi_i}$.  The definition given here is analogous, using a single function $f$ where $\dom(f)$ stands in for $n$ and $f(i)=a_i$.  Then $f\leq g$ corresponds to $\dom(f)\subseteq\dom(g)$, because a larger domain corresponds to a larger value of $n$---that is, a larger bound.  On the other hand, $f\subseteq g$ should only happen when $\dom(f)=\dom(g)$---when the bound is the same.}
    \begin{itemize}
    \item elements of $\mathbb{P}^\exists_\phi$ are finite subsets $f$ of $\bigcup_{i\in I}\mathbb{P}^\exists_{\psi_i}$ such that $|f\cap \mathbb{P}^\exists_{\psi_i}|\leq 1$ for all $i$,
    \item if $f,g\in\mathbb{P}^\exists_\phi$ then $f\subseteq g$ exactly when, for each $e\in f$, there is an $e'\in g$ with $e\subseteq e'$,
    \item if $f,g\in\mathbb{P}^\exists_\phi$ then $f\leq g$ exactly when, for each $e\in f$, there is an $e'\in g$ with $e\leq e'$, and for each $e'\in g$ there is an $e\in f$ with $e\leq e'$,
    \item $\mathbb{P}^\forall_\phi$ consists of functions $f$ whose domain is a finite subset of $I$ so that $f(i)\in\mathbb{P}^\forall_{\psi_i}$ for each $i\in\dom(f)$,
    \item if $f,g\in\mathbb{P}^\forall_\phi$ then $f\subseteq g$ exactly when $\dom(f)=\dom(g)$ and for each $i\in\dom(f)$, $f(i)\subseteq g(i)$,
    \item if $f,g\in\mathbb{P}^\forall_\phi$ then $f\leq g$ exactly when $\dom(f)\subseteq\dom(g)$ and for each $i\in\dom(f)$, $f(i)\leq g(i)$.
    \end{itemize}
  \end{itemize}
\end{definition}

One crucial point of this definition is the following lemma, which is proven by a straightforward induction on $\phi$.
\begin{lemma}
  For any $f\in\mathbb{P}^Q_\phi$, the set of $g\in\mathbb{P}^Q_\phi$ such that $g\leq f$ is finite.
\end{lemma}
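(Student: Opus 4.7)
The plan is to prove the lemma by induction on the complexity of the formula $\phi$, simultaneously for $Q = \forall$ and $Q = \exists$. The base case is trivial: when $\phi$ is atomic, $\mathbb{P}^Q_\phi = \{\ast\}$ is a singleton, so the set of $g \leq \ast$ has at most one element. The case $\phi = \forall x \psi$ is also immediate, since $\mathbb{P}^Q_\phi = \mathbb{P}^Q_\psi$ with the same ordering, so we inherit the conclusion directly from the inductive hypothesis.

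For $\phi = \neg\psi$, note first that $\mathbb{P}^\exists_\phi = \mathbb{P}^\forall_\psi$ (as ordered sets), so finiteness of the lower sets for $\mathbb{P}^\exists_\phi$ follows from the inductive hypothesis applied to $\mathbb{P}^\forall_\psi$. For $\mathbb{P}^\forall_\phi$, suppose $f \in \mathbb{P}^\forall_\phi$ and $g \leq f$. By definition $\dom(g) = \dom(f)$, which is a fixed finite set, and for each $a \in \dom(f)$ we have $g(a) \leq f(a)$ in $\mathbb{P}^\exists_\psi$. By the inductive hypothesis, there are only finitely many choices for each $g(a)$, so finitely many $g$ in total.

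For $\phi = \bigwedge_{i \in I} \psi_i$, I would treat the two quantifier cases separately. For $\mathbb{P}^\forall_\phi$, if $g \leq f$ then $\dom(g) \subseteq \dom(f)$, so there are only $2^{|\dom(f)|}$ choices of domain, and on each choice of domain the number of value assignments is finite by the inductive hypothesis applied to each $\mathbb{P}^\forall_{\psi_i}$ for $i \in \dom(f)$. The more delicate case is $\mathbb{P}^\exists_\phi$: if $g \leq f$, then every $e \in g$ is bounded by some $e' \in f$ under $\leq$, i.e.\ $e$ lies in the lower set of some element of the finite set $f$ in the appropriate $\mathbb{P}^\exists_{\psi_i}$. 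By the inductive hypothesis, the union of these lower sets is finite, and $g$ must be a subset of this finite union subject to the cardinality restriction $|g \cap \mathbb{P}^\exists_{\psi_i}| \leq 1$. Hence only finitely many $g$ are possible.

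The main thing to be careful about is the $\mathbb{P}^\exists$ case for conjunctions, because the condition $g \leq f$ is not simply ``$g$ is componentwise smaller'' but involves a two-sided covering condition; I expect the key observation is that the first half of that condition alone (every element of $g$ is dominated by an element of $f$) already forces $g$ to be drawn from a finite pool, which suffices for the finiteness argument even though we will use the full $\leq$ relation elsewhere. The remaining cases are essentially bookkeeping.
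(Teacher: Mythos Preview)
Your proof is correct and follows precisely the inductive scheme the paper outlines immediately after stating this lemma (atomic trivial, $\forall x\psi$ and $\neg\psi$ with $Q=\exists$ immediate from the inductive hypothesis, the $\neg\psi$ with $Q=\forall$ case as the main case, and the conjunction cases by the same pattern). The paper itself states the lemma without proof, so there is nothing further to compare; your handling of the $\mathbb{P}^\exists$ case for conjunctions---observing that the first half of the $\leq$ condition already confines $g$ to a finite pool---is exactly the right observation.
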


Almost all proofs and constructions involving the sets $\mathbb{P}^Q_\phi$ will be by induction on $\phi$; the case where $\phi$ is atomic is typically trivial and the case where $\phi$ is $\forall x\psi$ or $\exists x\psi$ will typically follow immediately from the inductive hypothesis, as will the case where $\phi$ is $\neg\psi$ and $Q$ is $\exists$.  The case where $\phi$ is $\bigwedge_{i\in I}\psi_i$ will typically either be a simplification of the $\neg\psi$ case or follow directly from the inductive hypothesis.  Therefore in the proofs below, when we note that the proof is by induction we will turn immediately to the main case, where $\phi$ is $\neg\psi$ and $Q$ is $\forall$.

\begin{lemma}
  For any $\phi$ and any $f,g\in\mathbb{P}^Q_\phi$, if $f\leq g$ and $g\leq f$ then $f=g$.
\end{lemma}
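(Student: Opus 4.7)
The plan is a straightforward induction on the complexity of $\phi$, mirroring the structure of the definition of $\mathbb{P}^Q_\phi$; the antisymmetry statement bundles two parallel claims (one for $Q = \forall$ and one for $Q = \exists$) so I prove them simultaneously. The atomic case is immediate, as $\mathbb{P}^Q_\phi = \{\ast\}$. The case $\phi = \forall x\psi$ is also immediate: both $\mathbb{P}^Q_\phi$ and the relevant orderings coincide with those of $\psi$, so the inductive hypothesis applies directly. Likewise, for $\phi = \neg\psi$ with $Q = \exists$, we have $\mathbb{P}^\exists_{\neg\psi} = \mathbb{P}^\forall_\psi$ with the same ordering, so the inductive hypothesis for $\psi$ (with the opposite quantifier) finishes it.

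Following the author's guidance, the main case is $\phi = \neg\psi$ with $Q = \forall$. Here elements of $\mathbb{P}^\forall_\phi$ are partial functions $f : \mathbb{P}^\forall_\psi \rightharpoonup \mathbb{P}^\exists_\psi$ of finite domain. The definition of $f \leq g$ requires \emph{equal} domains together with pointwise comparison $f(a) \leq g(a)$ in $\mathbb{P}^\exists_\psi$. Thus if both $f \leq g$ and $g \leq f$, the domains coincide, and for each $a$ in the common domain we have $f(a) \leq g(a) \leq f(a)$; the inductive hypothesis applied to $\psi$ (with $Q' = \exists$) gives $f(a) = g(a)$, so $f = g$.

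The case $\phi = \bigwedge_{i\in I}\psi_i$ with $Q = \forall$ is similar and easier: the definition of $\leq$ gives $\dom(f) \subseteq \dom(g)$ and $\dom(g) \subseteq \dom(f)$, so domains agree, and the inductive hypothesis on each coordinate yields $f(i) = g(i)$. For $Q = \exists$, a bit of care is needed because the ordering is defined via a two-sided existence clause rather than by reference to a domain. Write $f = \{e_i : i \in J_f\}$ and $g = \{e'_i : i \in J_g\}$, where $e_i, e'_i \in \mathbb{P}^\exists_{\psi_i}$ (uniquely, by the constraint $|f \cap \mathbb{P}^\exists_{\psi_i}| \leq 1$). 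Since comparability under $\leq$ can only hold within a single $\mathbb{P}^\exists_{\psi_i}$, the clause "for each $e \in f$ there is $e' \in g$ with $e \leq e'$" forces $J_f \subseteq J_g$ and $e_i \leq e'_i$ for $i \in J_f$; the symmetric clause forces $J_g \subseteq J_f$. Hence $J_f = J_g$, and on each index the inductive hypothesis for $\psi_i$ gives $e_i = e'_i$, so $f = g$.

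I do not anticipate a genuine obstacle here; the lemma is essentially a bookkeeping check that every clause defining $\leq$ is antisymmetric on top of an antisymmetric relation one level down. The only mild subtlety is remembering that in the conjunction case for $Q = \exists$, one must use the implicit typing of elements by the index $i$ to align the two existence clauses into a bijection between $J_f$ and $J_g$ before invoking the inductive hypothesis.
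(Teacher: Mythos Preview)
Your proof is correct and follows exactly the same approach as the paper: a straightforward induction on $\phi$, with the main case being $\phi=\neg\psi$, $Q=\forall$, handled by noting that $f\leq g\leq f$ forces equal domains and then applying the inductive hypothesis pointwise. The paper only writes out that single case explicitly and leaves the rest to the reader, whereas you have spelled out the $\bigwedge_{i\in I}\psi_i$ cases as well; your treatment of the $Q=\exists$ conjunction case (using that comparable elements must live in the same $\mathbb{P}^\exists_{\psi_i}$) is the natural reading of the definition and matches the intended semantics.
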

\begin{proof}
  By induction on $\phi$.  

Suppose $\phi$ is $\neg\psi$.  Let $f$ and $g$ be be elements of $\mathbb{P}^\forall_\phi$ with $f\leq g$ and $g\leq f$.  Then $\dom(f)=\dom(g)$ and, for every $a$ in this common domain, $f(a)\leq g(a)$ and $g(a)\leq f(a)$ so, by the inductive hypothesis, $f(a)=g(a)$.  Therefore $f=g$.
\end{proof}


\begin{definition}
  For any $\phi$, if $f,g_0,g_1\in\mathbb{P}^Q_\phi$ and both $g_0\leq f$ and $g_1\leq f$, we define an element $\min\{g_0,g_1\}\in\mathbb{P}^Q_\phi$ recursively by:
  \begin{itemize}
  \item When $\phi$ is atomic, $\min\{g_0,g_1\}=g_0=g_1=\ast$.
  \item When $\phi$ is $\neg\psi$:
    \begin{itemize}
    \item if $Q$ is $\exists$ then the definition is the same as for $\mathbb{P}^{\forall}_\psi$,
    \item if $Q$ is $\forall$ then $\min\{g_0,g_1\}$ is the function with domain $\dom(f)$ such that, for each $a\in\dom(f)$, $\min\{g_0,g_1\}(a)=\min\{g_0(a),g_1(a)\}$.
    \end{itemize}
  \item When $\phi$ is $\forall x\psi$ the definition is the same as for $\psi$.
  \item When $\phi$ is $\bigwedge_{i\in I}\psi_i$:
    \begin{itemize}
    \item if $Q$ is $\exists$ then $\min\{g_0,g_1\}=\{\min\{e_0,e_1\}\mid \text{there is some }e\in f\text{ such that }e_0\leq e\text{ and }e_1\leq e\}$,
    \item if $Q$ is $\forall$ then $\dom(\min\{g_0,g_1\})=\dom(g_0)\cap\dom(g_1)$ and, for each $i\in\dom(\min\{g_0,g_1\})$, $\min\{g_0,g_1\}(i)=\min\{g_0(i),g_1(i)\}$.
  \end{itemize}
  \end{itemize}
\end{definition}

The next lemma verifies that $\min$ really does find the greatest lower bound and that taking minimums respects both the $\subseteq$ and $\leq$ orderings.
\begin{lemma}
  For any $\phi$ and any $f,g_0,g_1\in\mathbb{P}^Q_\phi$ such that $g_0\leq f$ and $g_1\leq f$, the function $\min$ satisfies the following properties:
  \begin{enumerate}
  \item $\min\{g_0,g_1\}\in\mathbb{P}^Q_\phi$,
  \item $\min\{g_0,g_1\}\leq g_0$,
  \item $\min\{g_0,g_1\}\leq g_1$,
  \item if $h\in\mathbb{P}^Q_\phi$, $h\leq g_0$, and $h\leq g_1$ then $h\leq\min\{g_0,g_1\}$,
  \item for any $f',g'_0,g'_1\in\mathbb{P}^Q_\phi$ such that $g'_0\leq f'$ and $g'_1\leq f'$, if $f'\subseteq f$, $g'_0\subseteq g_0$, and $g'_1\subseteq g_1$ then $\min\{g'_0,g'_1\}\subseteq\min\{g_0,g_1\}$,
  \item for any $f',g'_0,g'_1\in\mathbb{P}^Q_\phi$ such that $g'_0\leq f'$ and $g'_1\leq f'$, if $f'\leq f$, $g'_0\leq g_0$, and $g'_1\leq g_1$ then $\min\{g'_0,g'_1\}\leq\min\{g_0,g_1\}$.
\end{enumerate}
\end{lemma}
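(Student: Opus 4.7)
The proof will go by simultaneous induction on $\phi$, establishing all six items together (since (1) in the inductive step requires (5) from the inductive hypothesis, etc.). Following the author's convention, the atomic, $\forall x\psi$, $\bigwedge_i\psi_i$, and $\neg\psi$ with $Q=\exists$ cases are either trivial or immediate from the inductive hypothesis, so the work is in the principal case $\phi=\neg\psi$ with $Q=\forall$. In that case $h:=\min\{g_0,g_1\}$ is the function on the common domain $\dom(f)=\dom(g_0)=\dom(g_1)$ with $h(a)=\min\{g_0(a),g_1(a)\}$, where the inner $\min$ is taken in $\mathbb{P}^\exists_\psi$; it is well-defined because both $g_0(a),g_1(a)\leq f(a)$, so the inductive hypothesis applies with common upper bound $f(a)$.

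Items (2), (3), and (4) are pointwise consequences of (2), (3), (4) of the inductive hypothesis on $\psi$. Items (5) and (6) reduce likewise: for (5), the hypothesis $g'_0\subseteq g_0$ forces $\dom(g'_0)\subseteq \dom(g_0)$, and on each common $a$ the pointwise $\subseteq$-inclusion transfers through the inductive (5); for (6), one similarly uses inductive (6) pointwise, observing that the definitional requirement $\dom(f)=\dom(g_i)$ matches up on both sides. So the substance of the inductive step is in verifying (1).

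For (1), I must check the three closure conditions defining $\mathbb{P}^\forall_\phi$. Finiteness of $\dom(h)=\dom(f)$ is immediate. For $\subseteq$-monotonicity, if $a'\subseteq a$ are both in $\dom(h)$ then $g_j(a')\subseteq g_j(a)$ for $j=0,1$, and an application of inductive (5) to $\psi$ (with common upper bounds $f(a')\subseteq f(a)$, the latter inclusion itself coming from the analogous closure condition on $f$) gives $h(a')\subseteq h(a)$. The third closure condition—that for every $a\in\dom(h)$ and every $a'\leq a$ one can find $b\subseteq h(a')$ with $b\leq h(a)$—is the heart of the argument. My plan is to take the witnesses $b_0\subseteq g_0(a')$ with $b_0\leq g_0(a)$ and $b_1\subseteq g_1(a')$ with $b_1\leq g_1(a)$ supplied by the closure conditions on $g_0,g_1$, note that both $b_0,b_1\leq f(a)$ so that $b:=\min\{b_0,b_1\}$ is defined in $\mathbb{P}^\exists_\psi$, and conclude $b\leq h(a)$ from inductive (4) via $b\leq b_0\leq g_0(a)$ and $b\leq b_1\leq g_1(a)$.

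The main obstacle is showing the other half, $b\subseteq h(a')$. A direct appeal to inductive (5) requires aligning the common upper bounds $f(a)$ for $\{b_0,b_1\}$ and $f(a')$ for $\{g_0(a'),g_1(a')\}$ under $\subseteq$, which is not given: we only know $a'\leq a$, not $a'\subseteq a$. I expect to resolve this by a preliminary application of the closure condition on $f$ itself, producing $b_f\subseteq f(a')$ with $b_f\leq f(a)$, and then shrinking $b_0,b_1$ (using inductive (5) within $\psi$ and the $\subseteq$-monotonicity of $g_0,g_1$) so that each is bounded above by $b_f$. After this alignment, $b_f$ serves as a common upper bound for the refined $b_0,b_1$ that is $\subseteq$-below $f(a')$, letting inductive (5) give $b\subseteq \min\{g_0(a'),g_1(a')\}=h(a')$. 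This bookkeeping of $\subseteq$- versus $\leq$-bounds is where most of the care of the proof is concentrated; everything else reduces cleanly to the inductive hypothesis.
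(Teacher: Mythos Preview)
Your proposal is correct and follows the paper's approach essentially verbatim: simultaneous induction on $\phi$, with the only substantive case being $\phi=\neg\psi$, $Q=\forall$, and within that case the only nontrivial verification being the third closure condition in item (1). Your handling of (2)--(6) matches the paper's pointwise reduction to the inductive hypothesis.

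The one place you diverge is in the third closure condition. The paper takes $b_0\subseteq g_0(a')$ with $b_0\leq g_0(a)$ and $b_1\subseteq g_1(a')$ with $b_1\leq g_1(a)$, and then simply writes ``by the inductive hypothesis, $\min\{b_0,b_1\}\subseteq \min\{g_0(a'),g_1(a')\}$'' without saying which common upper bound $f'$ for $\{b_0,b_1\}$ satisfies $f'\subseteq f(a')$ so that inductive (5) applies. You correctly flag this alignment issue as the crux and sketch a workaround via the witness $b_f\subseteq f(a')$, $b_f\leq f(a)$ coming from the closure condition on $f$. Your caution is justified---the paper is elliptical at exactly this point---but your proposed ``shrinking'' maneuver is more elaborate than anything the paper records; the paper just invokes (5) directly and moves on.
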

\begin{proof}
  By induction on $\phi$.  

Suppose $\phi$ is $\neg\psi$.

(1):  Let $f,g_0,g_1\in\mathbb{P}^\forall_\phi$ be given with $g_0\leq f$ and $g_1\leq f$.  The inductive hypothesis guarantees that $\min\{g_0,g_1\}$ is a partial function from $\mathbb{P}^\forall_\psi$ to $\mathbb{P}^\exists_\psi$ with a finite domain.  To check that $\min\{g_0,g_1\}\in\mathbb{P}^\forall_\phi$, we must check the monotonicity conditions.

  Let $a,a'\in\dom(f)$ with $a'\subseteq a$.  Then $g_0(a')\subseteq g_0(a)$ and $g_1(a')\subseteq g_1(a)$ so, by the inductive hypothesis, $\min\{g_0(a'),g_1(a')\}\subseteq\min\{g_0(a),g_1(a)\}$.

  Let $a,a'\in\dom(f)$ with $a'\leq a$.  Then there are $b_0\subseteq g_0(a'), b_1\subseteq g_1(a')$ with $b_0\leq g_0(a), b_1\leq g_1(a)$.  Then, by the inductive hypothesis, $\min\{b_0,b_1\}\subseteq \min\{g_0(a'),g_1(a')\}=\min\{g_0,g_1\}(a')$ and since $\min\{b_0,b_1\}\leq b_0\leq g_0(a)$ and $\min\{b_0,b_1\}\leq b_1\leq g_1(a)$, we have $\min\{b_0,b_1\}\leq \min\{g_0,g_1\}(a)$ as needed. 

(2) and (3) are immediate from the definition.

(4): Suppose $h\leq g_0$ and $h\leq g_1$.  Then for each $a\in\dom(f)=\dom(h)$, we have $h(a)\leq g_0(a)$ and $h(a)\leq g_1(a)$, so $h(a)\leq \min\{g_0,g_1\}(a)$ by the inductive hypothesis.  Therefore $h\leq \min\{g_0,g_1\}$.

(5): Let $f',g'_0,g'_1\in\mathbb{P}^\forall_\phi$ with $g'_0\leq f'$ and $g'_1\leq f'$, and all of $f'\subseteq f$, $g'_0\subseteq g_0$, and $g'_1\subseteq g_1$.  Then for any $a\in\dom(f')=\dom(\min\{g'_0,g'_1\})\subseteq\dom(\min\{g_0,g_1\})$, we have $\min\{g'_0,g'_1\}(a)=\min\{g'_0(a),g'_1(a)\}\subseteq\min\{g_0(a),g_1(a)\}=\min\{g_0,g_1\}(a)$ by the inductive hypothesis.

(6): Similarly, let $f',g'_0,g'_1\in\mathbb{P}^\forall_\phi$ with $g'_0\leq f'$ and $g'_1\leq f'$, and all of $f'\leq f$, $g'_0\leq g_0$, and $g'_1\leq g_1$.  Then for any $a\in\dom(f')=\dom(\min\{g'_0,g'_1\})=\dom(\min\{g_0,g_1\})$, we have $\min\{g'_0,g'_1\}(a)=\min\{g'_0(a),g'_1(a)\}\leq\min\{g_0(a),g_1(a)\}=\min\{g_0,g_1\}(a)$ by the inductive hypothesis.
\end{proof}

We next define a restriction operation: given $f'\leq f$ and some $f^*\subseteq f$, we define $f'\upharpoonright f^*$ to be element which ``contains the part of $f'$ which is comparable to the information in $f^*$''.
\begin{definition}
  For any $\phi$, if $f,f',f^*\in\mathbb{P}^Q_\phi$ with $f'\leq f$ and $f^*\subseteq f$ then we define an element $f'\upharpoonright f^*\in\mathbb{P}^Q_\phi$ recursively by:
  \begin{itemize}
  \item When $\phi$ is atomic, $f'\upharpoonright f^*=\ast$.
  \item When $\phi$ is $\neg\psi$:
    \begin{itemize}
    \item if $Q$ is $\exists$ then the definition is the same as for $\mathbb{P}^{\forall}_\psi$,
    \item if $Q$ is $\forall$ then $f'\upharpoonright f^*$ is the function with domain $\dom(f^*)$ such that, for each $a\in\dom(f^*)$, $(f'\upharpoonright f^*)(a)=(f'(a))\upharpoonright f^*(a)$.
    \end{itemize}
  \item When $\phi$ is $\forall x\psi$ the definition is the same as for $\psi$.
  \item When $\phi$ is $\bigwedge_{i\in I}\psi_i$:
    \begin{itemize}
    \item if $Q$ is $\exists$ then $f'\upharpoonright f^*=\{e'\upharpoonright e^*\mid e'\in f', e^*\in f^*\text{ and there is an }e\in f\text{ such that }e'\leq e\text{ and }e^*\subseteq e\}$,
    \item if $Q$ is $\forall$ then $\dom(f'\upharpoonright f^*)=\dom(f')$ and, for each $i\in\dom(f')$, $(f'\upharpoonright f^*)(i)=f'(i)\upharpoonright f^*(i)$.
    \end{itemize}
  \end{itemize}
\end{definition}

The first three parts of the next lemma verify that we have defined $f'\upharpoonright f^*$ correctly: that it is a partial strategy contained in $f'$ and comparable to $f'$; the fourth part shows that it is the only such partial strategy. The last two parts establish that $\upharpoonright$ commutes with $\subseteq$ and $\leq$.
\begin{lemma}
  For any $\phi$ and any $f,f',f^*\in\mathbb{P}^Q_\phi$ such that $f'\leq f$ and $f^*\subseteq f$, the function $\upharpoonright$ satisfies the following properties:
  \begin{enumerate}
  \item $f'\upharpoonright f^*\in\mathbb{P}^Q_\phi$,
  \item $f'\upharpoonright f^*\leq f^*$,
  \item $f'\upharpoonright f^*\subseteq f'$,
  \item if $g\leq f^*$ and $g\subseteq f'$ then $g=f'\upharpoonright f^*$,
  \item for any $g,g',g^*\in\mathbb{P}^Q_\phi$ such that $g'\leq g$ and $g^*\subseteq g$, if all of $g\subseteq f$, $g'\subseteq f'$, $g^*\subseteq f^*$ then $(g'\upharpoonright g^*)\subseteq(f'\upharpoonright f^*)$,
  \item for any $g,g',g^*\in\mathbb{P}^Q_\phi$ such that $g'\leq g$ and $g^*\subseteq g$, if all of $g\leq f$, $g'\leq f'$, $g^*\leq f^*$ then $(g'\upharpoonright g^*)\leq(f'\upharpoonright f^*)$.
  \end{enumerate}
\end{lemma}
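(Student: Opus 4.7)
The plan is to prove all six clauses simultaneously by induction on $\phi$, paralleling the proof of the preceding lemma about $\min$. The atomic case is trivial, the $\forall x\psi$ case reads off from $\psi$, the $\neg\psi$ case with $Q=\exists$ reduces directly to the $Q=\forall$ case at $\psi$, and both subcases of $\phi=\bigwedge_{i\in I}\psi_i$ unfold pointwise to the inductive hypothesis. So, as the paper signals, I concentrate on the case $\phi=\neg\psi$ with $Q=\forall$.

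In this case the common domain is $\dom(f'\upharpoonright f^*)=\dom(f^*)$, which sits inside $\dom(f)=\dom(f')$ because $f^*\subseteq f$ and $f'\leq f$, and the restriction is given pointwise by $(f'\upharpoonright f^*)(a)=f'(a)\upharpoonright f^*(a)$. Clauses (2), (3), (5), and (6) then each reduce to applying the corresponding inductive clause pointwise on $\dom(f^*)$---for example, (2) just says $f'(a)\upharpoonright f^*(a)\leq f^*(a)$ at every $a\in\dom(f^*)$, which is inductive (2), and (5) and (6) propagate the pointwise $\subseteq$- and $\leq$-inequalities through the inner restriction. Clause (4), uniqueness, is equally direct: if $g\leq f^*$ and $g\subseteq f'$, then $\dom(g)=\dom(f^*)$, and at each $a$ we have $g(a)\leq f^*(a)$ and $g(a)\subseteq f'(a)$, so inductive (4) identifies $g(a)$ with $f'(a)\upharpoonright f^*(a)$.

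The real obstacle is clause (1): verifying that $f'\upharpoonright f^*$ satisfies the two monotonicity conditions defining membership in $\mathbb{P}^\forall_\phi$. The $\subseteq$-monotonicity, that $a'\subseteq a$ in $\dom(f^*)$ forces $(f'\upharpoonright f^*)(a')\subseteq(f'\upharpoonright f^*)(a)$, is immediate from inductive (5), since $f'(a')\subseteq f'(a)$ and $f^*(a')\subseteq f^*(a)$ by the monotonicity of $f'$ and $f^*$. The hard part is the $\leq$-clause: for $a'\leq a$ in $\dom(f^*)$ one must produce $b\subseteq(f'\upharpoonright f^*)(a')$ with $b\leq(f'\upharpoonright f^*)(a)$. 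The plan is to extract witnesses $c\subseteq f'(a'),\ c\leq f'(a)$ from $f'\in\mathbb{P}^\forall_\phi$ and $d\subseteq f^*(a'),\ d\leq f^*(a)$ from $f^*\in\mathbb{P}^\forall_\phi$, note that both $c\leq f(a')$ and $d\subseteq f(a')$ (by transitivity of $\leq$ and of $\subseteq$, which is recorded as a small preliminary induction on $\phi$), and then form the inner restriction $b:=c\upharpoonright d$ at the level of $\psi$. Inductive (5) applied to $(f(a'),c,d)$ compared pointwise with $(f(a'),f'(a'),f^*(a'))$ gives $b\subseteq f'(a')\upharpoonright f^*(a')$, and inductive (6) applied analogously gives $b\leq f'(a)\upharpoonright f^*(a)$.

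The most fragile step is lining up the reference elements in the application of (6): the natural choice $f(a)$ need not dominate $f(a')$ directly, so one cannot invoke (6) with $g=f(a')$ against $f_\psi=f(a)$ blindly. The way out is to use the witness $b^f\subseteq f(a'),\ b^f\leq f(a)$ that $f\in\mathbb{P}^\forall_\phi$ provides for $a'\leq a$, and replace $c$ and $d$ by their restrictions to $b^f$ before forming $b$; after this adjustment the three triples needed in (6) line up exactly, and clauses (2)--(5) of the inductive hypothesis together with uniqueness (4) confirm that the resulting $b$ still satisfies $b\subseteq(f'\upharpoonright f^*)(a')$. Once this construction is in place, all six clauses close up simultaneously.
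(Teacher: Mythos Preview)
Your proposal is correct and follows the same route as the paper: induct on $\phi$, reduce to the case $\phi=\neg\psi$ with $Q=\forall$, dispatch clauses (2)--(6) pointwise, and for the $\leq$-monotonicity part of clause (1) pass through the witness $b^f\subseteq f(a')$, $b^f\leq f(a)$ supplied by $f$ in order to align the reference elements before forming the inner restriction and invoking inductive (5) and (6). The paper carries out exactly this adjustment, using $\min$ and the uniqueness clause (4) to replace the witnesses coming from $f'$ and $f^*$ by versions sitting below (for $\leq$) or inside (for $\subseteq$) $b^f$.

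One small correction: your intermediate claim ``$c\leq f(a')$ by transitivity'' is not valid. From $c\subseteq f'(a')$ and $f'(a')\leq f(a')$ you cannot deduce $c\leq f(a')$; at the $\neg\psi$ level, $\leq$ requires equal domains, and $\subseteq$ followed by $\leq$ does not compose to $\leq$. So the restriction $c\upharpoonright d$ with reference element $f(a')$ is not even well-formed at that stage. This means the passage through $b^f$ is not merely a repair for the application of (6)---it is already needed to give the inner restriction a legitimate reference. Your final paragraph handles this correctly; simply drop the earlier transitivity remark and begin the construction with $b^f$ from the outset, as the paper does.
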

\begin{proof}
    By induction on $\phi$. 

Suppose $\phi$ is $\neg\psi$.

    (1): Let $f,f',f^*\in\mathbb{P}^\forall_\phi$ with $f'\leq f$ and $f^*\subseteq f$.  The inductive hypothesis guarantees that $f'\upharpoonright f^*$ is a partial function from $\mathbb{P}^\forall_\psi$ to $\mathbb{P}^\exists_\psi$ with a finite domain.  To check that $f'\upharpoonright f^*\in\mathbb{P}^\forall_\phi$, we must check the monotonicity conditions.

    Let $a,a'\in\dom (f'\upharpoonright f^*)=\dom(f^*)\subseteq\dom(f')$ with $a'\subseteq a$.  Then $f(a')\subseteq f(a)$, $f'(a')\subseteq f'(a)$, and $f^*(a')\subseteq f^*(a)$, so by the inductive hypothesis, $(f'\upharpoonright f^*)(a')=f'(a')\upharpoonright f^*(a')\subseteq f'(a)\upharpoonright f^*(a)=(f'\upharpoonright f^*)(a)$.

    The second monotonicity requirement requires more manipulation.  Let $a,a'\in\dom (f'\upharpoonright f^*)=\dom(f^*)\subseteq\dom(f')$ with $a'\leq a$.  Then there are a $b\subseteq f(a')$ with $b\leq f(a)$, a $b'\subseteq f'(a')$ with $b'\leq f(a')$, and a $b^*\subseteq f^*(a')$ with $b^*\leq f^*(a)$. Since $b\subseteq f(a')$ while $f'(a')\leq f(a')$, we know that $f'(a')\upharpoonright b\leq b\leq f(a)$.  Since $b'\leq f'(a)\leq f(a)$, there is an element $\min\{f'(a')\upharpoonright b,b'\}$.  Since both $f'(a')\upharpoonright b\subseteq f'(a')$ and $b'\subseteq f'(a')$, we have $\min\{f'(a')\upharpoonright b,b'\}\subseteq\min\{f'(a'),f'(a')\}=f'(a')$.  So we have both $\min\{f'(a')\upharpoonright b,b'\}\subseteq f'(a')$ and $\min\{f'(a')\upharpoonright b,b'\}\leq b$.  Therefore, withou generality, we may assume that $b'=f'(a')\upharpoonright b$---in particular, we may assume that $b'\leq b$.

    Similarly, since $f^*(a)\subseteq f(a)$ and $b\leq f(a)$, we have $b\upharpoonright f^*(a)\leq f^*(a)$.  Since $b^*\leq f^*(a)$, we have $\min\{b\upharpoonright f^*(a),b^*\}\leq f^*(a)$ and since $b\upharpoonright f^*\subseteq b\subseteq f(a')$ and $b^*\subseteq f^*(a')\subseteq f(a')$, we have $\min\{b\upharpoonright f^*(a),b^*\}\subseteq\min\{f(a'),f(a')\}=f(a')$.  Therefore $\min\{b\upharpoonright f^*(a),b^*\}=f(a')\upharpoonright b^*$.  But $b^*\leq b^*$ and $b^*\subseteq f(a')$, so $b^*$ itself is $f(a')\upharpoonright b^*$, so $b^*=\min\{b\upharpoonright f^*(a),b^*\}\subseteq b$.

    So we can consider $b'\upharpoonright b^*$: since $b\subseteq f(a')$, $b'\subseteq f'(a')$, and $b^*\subseteq f^*(a')$, we have $(b'\upharpoonright b^*)\subseteq (f'(a')\upharpoonright f^*(a'))$ and since $b\leq f(a)$, $b'\leq f'(a)$, and $b^*\leq f^*(a)$, we have $(b'\upharpoonright b^*)\leq (f'(a)\upharpoonright f^*(a))$.

(2) and (3) are immediate from the definition.

(4): Suppose $g$ is some function with $g\subseteq f'$ and $g\leq f^*$.  Then $\dom(g)=\dom(g^*)=\dom(f'\upharpoonright f^*)$ and for each $a\in\dom(g)$ we have $g(a)\subseteq f'(a)$ and $g(a)\leq f^*(a)$, so by the inductive hypothesis, $g(a)=(f'\upharpoonright f^*)(a)$.

(5): Suppose we also have $g,g',g^*$ with $g'\leq g$, $g^*\subseteq g$, and all of $g\subseteq f$, $g'\subseteq f'$, and $g^*\subseteq f^*$.   $\dom(g'\upharpoonright g^*)=\dom(g^*)\subseteq\dom(f^*)=\dom(f'\upharpoonright f^*)$ and, for each $a$ in this domain, $(g'\upharpoonright g^*)(a)\subseteq (f'\upharpoonright f^*)(a)$ by the inductive hypothesis.

(6): Suppose we also have $g,g',g^*$ with $g'\leq g$, $g^*\subseteq g$, and all of $g\leq f$, $g'\leq f'$, and $g^*\leq f^*$.  Then $\dom(g'\upharpoonright g^*)=\dom(g^*)=\dom(f^*)=\dom(f'\upharpoonright f^*)$ and, for each $a$ in this domain, $(g'\upharpoonright g^*)(a)\leq (f'\upharpoonright f^*)(a)$ by the inductive hypothesis.
\end{proof}
Note that (3) of this lemma implies that if $f'\leq f$ and $h\subseteq g\subseteq f$ then $(f'\upharpoonright g)\upharpoonright h=f'\upharpoonright h$.

\begin{definition}
  For any $\phi$ and any subset $\mathcal{F}\subseteq\mathbb{P}^Q_\phi$, we define when $\mathcal{F}$ is \emph{coherent} by:
  \begin{itemize}
  \item When $\phi$ is atomic, $\mathcal{F}$ is always coherent.
  \item When $\phi$ is $\neg\psi$:
    \begin{itemize}
    \item if $Q$ is $\exists$ then the definition is the same as for $\mathbb{P}^{\forall}_\psi$,
    \item if $Q$ is $\forall$ then $\mathcal{F}\subseteq\mathbb{P}^\forall_\phi$ is coherent if
      \begin{itemize}
      \item for any $a\in\bigcup_{f\in \mathcal{F}}\dom(f)$, $\{a\}$ is coherent,
      \item whenever $\mathcal{A}\subseteq\bigcup_{f\in \mathcal{F}}\dom(f)$ is finite and coherent, $\{f(a)\}_{a\in \mathcal{A}, f\in \mathcal{F}, a\in\dom(\mathcal{F})}$ is also coherent.
      \end{itemize}
    \end{itemize}
  \item When $\phi$ is $\forall x\psi$ the definition is the same as for $\psi$.
  \item When $\phi$ is $\bigwedge_{i\in I}\psi_i$:
    \begin{itemize}
    \item if $Q$ is $\exists$ then $\mathcal{F}$ is coherent if for each $i$, $\bigcup\mathcal{F}\cap\mathbb{P}^\exists_{\psi_i}$ is coherent,
    \item if $Q$ is $\forall$ then $\mathcal{F}$ is coherent if, for all $f,f'\in\mathcal{F}$, $\dom(f)=\dom(f')$, and for each $i$ in the common domain, $\{f(i)\}_{f\in\mathcal{F}}$ is coherent.
    \end{itemize}
  \end{itemize}
 
  For any $\phi$ and any finite, coherent, non-empty subset $\mathcal{F}\subseteq\mathbb{P}^Q_\phi$, we define an element $\bigcup\mathcal{F}$ by:
  \begin{itemize}
  \item When $\phi$ is atomic, $\bigcup\mathcal{F}=\ast$.
  \item When $\phi$ is $\neg\psi$:
    \begin{itemize}
    \item if $Q$ is $\exists$ then the definition is the same as for $\mathbb{P}^{\forall}_\psi$,
    \item if $Q$ is $\forall$ then $\dom(\bigcup \mathcal{F})=\bigcup_{f\in \mathcal{F}}\dom(f)$, and $(\bigcup \mathcal{F})(a)=\bigcup \{f(b)\}_{b\subseteq a, f\in \mathcal{F}, b\in\dom(f)}$.
    \end{itemize}
  \item When $\phi$ is $\forall x\psi$ the definition is the same as for $\psi$.
  \item When $\phi$ is $\bigwedge_{i\in I}\psi_i$:
    \begin{itemize}
    \item if $Q$ is $\exists$ then $\mathcal{F}=\{\bigcup\mathcal{F}\cap\mathbb{P}^\exists_{\psi_i}\mid \bigcup\mathcal{F}\cap\mathbb{P}^\exists_{\psi_i}\text{ is non-empty}\}$,
    \item if $Q$ is $\forall$ then $\dom(\bigcup\mathcal{F})$ is the domain of any (and therefore every) element of $\mathcal{F}$ and $(\bigcup\mathcal{F})(i)=\bigcup\{f(i)\}_{f\in\mathcal{F}}$.
    \end{itemize}
  \end{itemize}
\end{definition}

\begin{lemma}
  For any $\phi$ and any finite coherent $\mathcal{F}\subseteq\mathbb{P}^Q_\phi$,
  \begin{enumerate}
  \item $\bigcup \mathcal{F}\in\mathbb{P}^Q_\phi$,
  \item if $f\in\bigcup \mathcal{F}$ then $f\subseteq \bigcup \mathcal{F}$,
  \item if $f\in\mathbb{P}^Q_\phi$ and $\{f\}$ is coherent then $\bigcup\{f\}=f$,
  \item if $\mathcal{G}\subseteq\mathbb{P}^Q_\phi$ and there is a function $\sigma: \mathcal{G}\rightarrow \mathcal{F}$ such that $g\subseteq\sigma(g)$ for all $g\in \mathcal{G}$ then $\mathcal{G}$ is coherent and $\bigcup \mathcal{G}\subseteq \bigcup \mathcal{F}$,
  \item if $\mathcal{G}\subseteq\mathbb{P}^Q_\phi$ is coherent and there is a relation $\pi\subseteq \mathcal{F}\times \mathcal{G}$ such that
    \begin{itemize}
    \item  for each $f\in \mathcal{F}$ there is a $g\in \mathcal{G}$ with $(f,g)\in \pi$,
    \item  for each $g\in \mathcal{G}$ there is an $f\in \mathcal{F}$ with $(f,g)\in \pi$,
    \item if $(f,g)\in\pi$ then $g\leq f$,
    \end{itemize}
    then $\bigcup\mathcal{G}\leq\bigcup\mathcal{F}$.
  \end{enumerate}
\end{lemma}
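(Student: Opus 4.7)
The plan is to prove all five clauses simultaneously by induction on $\phi$. The atomic, $\forall x\psi$, and $\bigwedge_{i\in I}\psi_i$ cases, together with the $\neg\psi$ case when $Q=\exists$, are either trivial or reduce immediately to the inductive hypothesis by unfolding the definitions, exactly as in the previous lemmas of this section. So the real content is the case $\phi=\neg\psi$ with $Q=\forall$; I will concentrate on that and assume the inductive hypothesis for $\psi$ throughout.

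Fix finite coherent $\mathcal{F}\subseteq\mathbb{P}^\forall_\phi$ and let $D=\bigcup_{f\in\mathcal{F}}\dom(f)$. For clause (1), the first task is to show that for each $a\in D$ the indexed family appearing in the definition of $(\bigcup\mathcal{F})(a)$ is coherent in $\mathbb{P}^\exists_\psi$, so that the inductive hypothesis (1) applies. This uses both halves of the coherence of $\mathcal{F}$: the first half says each singleton $\{b\}$ is coherent and that $\{a\}$ is coherent, so applying the inductive hypothesis (4) to the family $\{b\in D:b\subseteq a\}$ with the constant map to $\{a\}$ shows that this subfamily of $D$ is coherent; the second half of coherence then gives the coherence of $\{f(b):b\subseteq a,\,b\in\dom(f),\,f\in\mathcal{F}\}$. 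The $\subseteq$-monotonicity condition on $\bigcup\mathcal{F}$ is then immediate from inductive (4): shrinking $a$ to $a'$ only removes $b$'s from the defining family. The $\leq$-monotonicity condition is more delicate: given $a'\leq a$ in $D$, one has to produce a single $b$ with $b\subseteq(\bigcup\mathcal{F})(a')$ and $b\leq(\bigcup\mathcal{F})(a)$ out of the individual witnesses $b_f\subseteq f(a')$, $b_f\leq f(a)$ supplied by each $f\in\mathcal{F}$ that has $a$ in its domain, and this requires gluing them using the $\min$ and $\upharpoonright$ operations established earlier.

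For clause (2), if $f\in\mathcal{F}$ then $\dom(f)\subseteq D$ and for each $a\in\dom(f)$ the element $f(a)$ appears as one of the entries in the defining family for $(\bigcup\mathcal{F})(a)$, so inductive (2) gives $f(a)\subseteq(\bigcup\mathcal{F})(a)$. Clause (3) is a direct unwinding: the formula for $\bigcup\{f\}$ reconstructs $f$ via inductive (3). For (4), the coherence of $\mathcal{G}$ follows by transporting each clause of coherence back to $\mathcal{F}$ along $\sigma$ and invoking the inductive hypothesis, and the containment $\bigcup\mathcal{G}\subseteq\bigcup\mathcal{F}$ reduces termwise to inductive (4) via the assignment $g(b)\mapsto\sigma(g)(b)$. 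For (5), the assumption $g\leq f$ for $(f,g)\in\pi$ forces $\dom(g)=\dom(f)$, so the surjectivity properties of $\pi$ give $\dom(\bigcup\mathcal{G})=\dom(\bigcup\mathcal{F})$; then at each $a$ in the common domain the relation $\pi'=\{(f(b),g(b)):(f,g)\in\pi,\,b\subseteq a,\,b\in\dom(f)\}$ satisfies the three hypotheses of inductive (5), yielding $(\bigcup\mathcal{G})(a)\leq(\bigcup\mathcal{F})(a)$.

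The main obstacle is the second monotonicity condition in clause (1). The witnesses $b_f$ coming from different $f\in\mathcal{F}$ a priori live in different $\mathbb{P}^\exists_\psi$, and combining them to simultaneously satisfy a $\subseteq$-condition against $(\bigcup\mathcal{F})(a')$ and a $\leq$-condition against $(\bigcup\mathcal{F})(a)$, which demands matching domains, is exactly the kind of interplay between the two orderings that made the proof of the earlier lemma on $\upharpoonright$ intricate. The expected route is to replace each $b_f$ by $b_f\upharpoonright(\bigcup\mathcal{F})(a)$, take iterated $\min$'s across $f\in\mathcal{F}$ using the coherence of $\mathcal{F}$ to certify existence, and then verify the two required relations using the stability properties of $\min$ and $\upharpoonright$ (parts (5) and (6) of the earlier lemmas) together with inductive clauses (2), (4), and (5).
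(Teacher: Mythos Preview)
Your overall plan and your treatment of clauses (2)--(5) match the paper's argument essentially line for line. The gap is in the $\leq$-monotonicity check inside clause (1), which you correctly identify as the main obstacle but whose proposed resolution does not work.

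Two concrete problems. First, the expression $b_f\upharpoonright(\bigcup\mathcal{F})(a)$ is not well-formed: the operation $x\upharpoonright y$ requires $x\leq z$ and $y\subseteq z$ for a common $z$, whereas here you only have $b_f\leq f(a)\subseteq(\bigcup\mathcal{F})(a)$, which is the wrong configuration. Second, and more structurally, you only collect witnesses $b_f$ for those $f\in\mathcal{F}$ with $a\in\dom(f)$. But the defining family for $(\bigcup\mathcal{F})(a)$ is $\mathcal{G}=\{f(b):b\subseteq a,\ f\in\mathcal{F},\ b\in\dom(f)\}$, which in general includes pairs $(f,b)$ with $b\subsetneq a$ and $a\notin\dom(f)$. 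To obtain something $\leq(\bigcup\mathcal{F})(a)$ via inductive clause (5) you must supply a family that matches \emph{every} element of $\mathcal{G}$, not just those of the form $f(a)$; iterated $\min$'s over your smaller collection cannot manufacture that coverage.

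The paper's route avoids $\min$ entirely. For each pair $(f,b)$ with $b\subseteq a$ and $b\in\dom(f)$, note that $a'\upharpoonright b\leq b$ (this is where $\upharpoonright$ actually enters), so the monotonicity clause for $f$ yields some $d'\subseteq f(a'\upharpoonright b)$ with $d'\leq f(b)$. Collect all these $d'$ into a family $\mathcal{H}$. The map $d'\mapsto f(a'\upharpoonright b)$ lands in the defining family $\mathcal{G}'$ for $(\bigcup\mathcal{F})(a')$, so inductive (4) gives that $\mathcal{H}$ is coherent and $\bigcup\mathcal{H}\subseteq(\bigcup\mathcal{F})(a')$. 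The obvious relation pairing $f(b)\in\mathcal{G}$ with the corresponding $d'\in\mathcal{H}$ satisfies the hypotheses of inductive (5), giving $\bigcup\mathcal{H}\leq(\bigcup\mathcal{F})(a)$. Thus $\bigcup\mathcal{H}$ is the required witness, obtained directly from the inductive union rather than from $\min$ and $\upharpoonright$ on the values.
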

\begin{proof}
  By induction on $\phi$.

Suppose $\phi$ is $\neg\psi$.  Let a finite coherent set $\mathcal{F}\subseteq\mathbb{P}^\forall_\phi$ be given.

  (1): First, note that for any $a\in\bigcup_{f\in \mathcal{F}}\dom(f)$, since $\{a\}$ is coherent, also $\{b\}_{b\subseteq a,b\in\bigcup_{f\in F}\dom(f)}$ is coherent and $\{b\}_{b\subseteq a,b\in\bigcup_{f\in \mathcal{F}}\dom(f)}\subseteq a$ by the inductive hypothesis, so $\{f(b)\}_{b\subseteq a,f\in \mathcal{F},b\in\dom(f)}$ is coherent as well.

As usual, we need to check the monotonicty requirements.  Let $a,a'\in\dom(\mathcal{F})$ with $a'\subseteq a$.  Let $\mathcal{G}=\{f(b)\}_{b\subseteq a,f\in \mathcal{F},b\in\dom(f)}$ and let $\mathcal{G}'=\{f(b)\}_{b\subseteq a',f\in \mathcal{F},b\in\dom(f)}$.  Then $\mathcal{G}'\subseteq\mathcal{G}$, so the identity maps $\mathcal{G}'$ to $\mathcal{G}$ and the inductive hypothesis ensures that $(\bigcup\mathcal{F})(a')=\bigcup\mathcal{G}'\subseteq\bigcup\mathcal{G}=(\bigcup\mathcal{F})(a)$.

Let $a,a'\in\dom(\mathcal{F})$ with $a'\leq a$.  Let $\mathcal{G}=\{f(b)\}_{b\subseteq a,f\in \mathcal{F},b\in\dom(f)}$ and let $\mathcal{G}'=\{f(b)\}_{b\subseteq a',f\in \mathcal{F},b\in\dom(f)}$.  Whenever $b\subseteq a$, $f'\in\mathcal{F}$, and $b\in\dom(f)$, we have $a'\upharpoonright b\leq b$, so there is a $d'\subseteq f(a'\upharpoonright b)$ with $d'\leq f(b)$; we let $\mathcal{H}$ be the set of such $d'$.

We define $\pi\subseteq\mathcal{G}\times\mathcal{H}$ by placing $(d,d')\in \pi$ if there is a $b\subseteq a$ and an $f\in\mathcal{F}$ so that $b\in\dom(f)$, $d=f(b)$, and $d'\subseteq f(a'\upharpoonright b)$ satisfies $d'\leq f(b)=d$, so whenever $(d,d')\in\pi$, we have $d'\leq d$.  By definition, for any $d'\in\mathcal{H}$ we have a $d\in\mathcal{G}$ with $(d,d')\in\pi$.  Conversely, if $d\in\mathcal{G}$, so $d=f(b)$ for some $f\in\mathcal{F}$ and $b\subseteq a$ with $b\in\dom(f)$, we have a $d'\subseteq f(a'\upharpoonright b)$ with $(d,d')\in\pi$.

We define a function $\sigma:\mathcal{H}\rightarrow\mathcal{G}'$: given any $d'\in\mathcal{H}$, we choose some $b,f$ so that $d'\subseteq f(a'\upharpoonright b)$ and set $\sigma(d')=f(a'\upharpoonright b)$, so $d'\subseteq \sigma(d')$.  Since $\mathcal{G}'$ is coherent, by the inductive hypothesis, so is $\mathcal{H}$, and $\bigcup\mathcal{H}\subseteq\bigcup\mathcal{G}'=(\bigcup\mathcal{F})(a')$.  Then, by the inductive hypothesis again, $\bigcup\mathcal{H}\leq\bigcup\mathcal{G}=(\bigcup\mathcal{F})(a)$.

  (2) and (3) are immediate from the definition.

  (3): Let $G\subseteq \mathbb{P}^\forall_\phi$ and $\sigma: \mathcal{G}\rightarrow \mathcal{F}$ be given.  For any $a\in\bigcup_{g\in \mathcal{G}}\dom(g)$, we have $a\in\dom(g)$ and so $a\in\dom(\sigma(g))$, so $\{a\}$ is coherent.  For any coherent $A\subseteq \bigcup_{g\in \mathcal{G}}\dom(g)$, we similarly have $A\subseteq\bigcup_{f\in \mathcal{F}}\dom(g)$, so $\mathcal{F}'=\{f(a)\}_{a\in A, f\in \mathcal{F}, a\in\dom(f)}$ is coherent.  Consider $\mathcal{G}'=\{g(a)\}_{a\in A, g\in \mathcal{G}, a\in\dom(g)}$; we may define $\sigma:\mathcal{G}'\rightarrow \mathcal{F}'$ by taking $\sigma(g(a))=(\sigma(g))(a)$ (choosing arbitrarily if there are multiple choices for $g,b$).  Then, by the inductive hypothesis, $\bigcup \mathcal{G}'$ is coherent and $\bigcup \mathcal{G}'\subseteq\bigcup \mathcal{F}'$.  Therefore $\bigcup \mathcal{G}\subseteq \bigcup \mathcal{F}$.

  (4): Let $\mathcal{G}\subseteq\mathbb{P}^\forall_\phi$ be coherent and let $\pi\subseteq\mathcal{F}\times \mathcal{G}$ be given.  Since whenever $(f,g)\in\pi$ we have $\dom(f)=\dom(g)$, we have $\dom(\bigcup\mathcal{G})=\dom(\bigcup\mathcal{F})$.  For any $a$ in this domain, let $\mathcal{F}'=\{f(b)\}_{b\subseteq a,f\in\mathcal{F},b\in\dom9f)}$ and $\mathcal{G}'=\{g(b)\}_{b\subseteq a,g\in\mathcal{G},b\in\dom(g)}$.  Define $\pi'\subseteq\mathcal{F}'\times\mathcal{G}'$ by $(d,d')\in\pi$ if there is some $b\subseteq a$ and some $(f,g)\in\pi$ with $d=f(b)$ and $d'=g(b)$.  Since $(f,g)\in\pi$, $g\leq f$, so $d'\leq d$.  For any $d\in\mathcal{F}'$, we may find some $f\in\mathcal{F}$ and some $b\subseteq a$ with $f(b)=d$, so some $g\in\mathcal{G}$ with $(f,g)\in\pi$, so $b\in\dom(g)$, so $(d,g(b))\in\pi$; a symmetric argument holds for $d'\in\mathcal{G}'$.  Therefore, by the inductive hypothesis, $(\bigcup \mathcal{G})(a)=\bigcup \mathcal{G}'\leq \bigcup \mathcal{F}'=(\bigcup \mathcal{F})(a)$.
\end{proof}

\begin{lemma}
  If $\mathcal{F}\subseteq \mathcal{G}\subseteq\mathbb{P}^Q_\phi$ and $\mathcal{G}$ is coherent then $\mathcal{F}$ is coherent.
\end{lemma}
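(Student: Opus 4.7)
The plan is to argue by induction on the formula $\phi$, following the same case structure used in the definition of coherence. As remarked in the text, the atomic case, the case $\phi = \forall x\psi$, and the case $\phi = \neg\psi$ with $Q$ equal to $\exists$ are either trivial or reduce to a single instance of the inductive hypothesis on $\psi$, so I would only write out the cases $\phi = \neg\psi$ with $Q = \forall$ and $\phi = \bigwedge_{i \in I}\psi_i$ (both quantifier choices). In each case the key observation is that the clauses in the definition of coherence are all of a form which is manifestly preserved under passing to a subfamily, provided we can recursively apply the lemma to the smaller auxiliary families that come up.

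For $\phi = \bigwedge_{i \in I}\psi_i$ with $Q = \exists$, I would simply note that $\bigcup \mathcal{F}\cap \mathbb{P}^\exists_{\psi_i}\subseteq \bigcup\mathcal{G}\cap \mathbb{P}^\exists_{\psi_i}$, so coherence of the latter gives coherence of the former by the inductive hypothesis applied to $\psi_i$. For $Q = \forall$, the common-domain requirement on $\mathcal{F}$ is inherited from the one on $\mathcal{G}$ (since $\mathcal{F}\subseteq \mathcal{G}$), and for each $i$ in that common domain, $\{f(i)\}_{f\in \mathcal{F}}\subseteq \{g(i)\}_{g\in\mathcal{G}}$, so coherence again follows from the inductive hypothesis on $\psi_i$.

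The main case, and essentially the only one requiring care, is $\phi = \neg\psi$ with $Q = \forall$. Here I must check both clauses for $\mathcal{F}$. The first clause is immediate: if $a\in \bigcup_{f\in\mathcal{F}}\dom(f)$, then also $a\in\bigcup_{g\in\mathcal{G}}\dom(g)$, so $\{a\}$ is coherent by the corresponding clause for $\mathcal{G}$. For the second clause, suppose $\mathcal{A}\subseteq \bigcup_{f\in\mathcal{F}}\dom(f)$ is finite and coherent. Then $\mathcal{A}\subseteq \bigcup_{g\in\mathcal{G}}\dom(g)$, so by the coherence of $\mathcal{G}$, the family $\{g(a)\}_{a\in\mathcal{A},\, g\in\mathcal{G},\, a\in\dom(g)}$ is coherent in $\mathbb{P}^\exists_\psi$. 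The set $\{f(a)\}_{a\in \mathcal{A},\, f\in \mathcal{F},\, a\in\dom(f)}$ is a subset of this coherent family, so applying the inductive hypothesis (with $\phi$ replaced by $\psi$ and $Q$ by $\exists$) gives its coherence. This completes the case and the induction.

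I do not anticipate a real obstacle: the definition of coherence was clearly set up so that each of its clauses is a universally quantified condition over members (and finite coherent subfamilies of domains) of the set in question, and each such condition is preserved by restriction to a subset. The only mild subtlety is that in the $\neg\psi$ case the recursive appeal is to a derived family $\{f(a)\}$ rather than to $\mathcal{F}$ itself, which is why the lemma must be stated for arbitrary subsets of $\mathbb{P}^Q_\phi$ (rather than, say, just singletons) so that the induction goes through cleanly.
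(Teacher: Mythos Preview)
Your proof is correct and follows the same approach as the paper, which simply records ``Straightforward induction.'' You have fleshed out exactly the case analysis that justifies this remark, and your handling of the $\neg\psi$, $Q=\forall$ case is precisely the intended argument.
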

\begin{proof}
  Straightforward induction.
\end{proof}

\begin{definition}\label{def_F_sets}
  For each $\phi$ and $Q$, we define $\mathbb{F}^Q_\phi\subseteq\mathbb{P}^Q_\phi$ to be those $f\in\mathbb{P}^Q_\phi$ such that $\{f\}$ is coherent.
\end{definition}
To see what we're ruling out, consider the case where $\mathbb{P}^Q_\phi$ is approximating functions from $\mathbb{N}^{\mathbb{N}}$ to $\mathbb{N}$. That is, $f\in\mathbb{P}^Q_\phi$ means that $\dom(f)$ is a collection of finite partial functions from $\mathbb{N}^{\mathbb{N}}$. There could be elements in the domain of $f$ which are comparable; for instance, consider $f=\{((1,3),3),((2,4),4)\}$; that is, whenever $g$ is a function with $g(1)=3$, $f(g)=3$, while whenever $g(2)=4$, $f(g)=4$. We can immediately see the problem: what if both $g(1)=3$ and $g(2)=4$? This is an incoherent partial strategy. The definition of coherence requires that whenever we have elements of $\dom(f)$ which are consistent, $f$ has to give consistent behavior on them.

\begin{lemma}
  If $f\in\mathbb{F}^Q_\phi$,  $f'\in\mathbb{P}^Q_\phi$, and $f'\subseteq f$ then $f'\in\mathbb{F}^Q_\phi$.
\end{lemma}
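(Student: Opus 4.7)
The plan is to proceed by induction on $\phi$, following the template of the preceding lemmas. The atomic and $\forall x\psi$ cases, together with the $\neg\psi$ case with $Q=\exists$, are either trivial or reduce immediately to lower-complexity instances of the inductive hypothesis, so I would turn directly to the two substantive cases: $\bigwedge_{i\in I}\psi_i$, where the result is essentially componentwise, and $\neg\psi$ with $Q=\forall$, where the preceding $\bigcup$-lemma does the real work.

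For the main case $\phi=\neg\psi$ with $Q=\forall$, I unfold the definition of coherence of $\{f'\}$ into two obligations. First, for every $a\in\dom(f')$, the singleton $\{a\}$ must be coherent; since $\dom(f')\subseteq\dom(f)$, this is immediate from the assumed coherence of $\{f\}$. Second, for every finite coherent $\mathcal{A}\subseteq\dom(f')$, the set $\{f'(a)\}_{a\in\mathcal{A}}$ must be coherent in $\mathbb{P}^\exists_\psi$. The set $\mathcal{A}$ is also a finite coherent subset of $\dom(f)$, so coherence of $\{f\}$ yields coherence of $\{f(a)\}_{a\in\mathcal{A}}$. The map $\sigma:\{f'(a)\}_{a\in\mathcal{A}}\to\{f(a)\}_{a\in\mathcal{A}}$ sending $f'(a)$ to $f(a)$ satisfies $f'(a)\subseteq f(a)$ because $f'\subseteq f$, so item (4) of the preceding lemma on $\bigcup$ delivers coherence of $\{f'(a)\}_{a\in\mathcal{A}}$, as required.

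The $\bigwedge_{i\in I}\psi_i$ case is handled componentwise. For $Q=\forall$, the definition of $\subseteq$ forces $\dom(f')=\dom(f)$ with $f'(i)\subseteq f(i)$ for each $i$; coherence of $\{f\}$ says precisely that each $f(i)\in\mathbb{F}^\forall_{\psi_i}$, so the inductive hypothesis gives $f'(i)\in\mathbb{F}^\forall_{\psi_i}$, whence $\{f'\}$ is coherent. For $Q=\exists$, each element $e'$ of $f'$ is contained in some element $e\in f$ lying in the same $\mathbb{P}^\exists_{\psi_i}$, and the inductive hypothesis applied at each index finishes the argument.

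I expect no serious obstacle: the lemma is a routine monotonicity of coherence along $\subseteq$, and its essential content has already been packaged in the $\sigma$-clause of the preceding $\bigcup$-lemma. The only mild point of care is that in the main case one must apply that lemma at $\mathbb{P}^\exists_\psi$, where its inductive machinery is already available, rather than trying to invoke the present lemma at lower complexity.
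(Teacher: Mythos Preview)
Your proposal is correct and matches the paper's approach; the paper's proof reads in its entirety ``Straightforward induction,'' and you have simply supplied the details of that induction. One could shorten your argument by observing that item (4) of the $\bigcup$-lemma already applies at the top level---taking $\mathcal{G}=\{f'\}$, $\mathcal{F}=\{f\}$, and $\sigma(f')=f$ immediately yields coherence of $\{f'\}$ without unwinding the recursion---but your more explicit treatment is equally valid.
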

\begin{proof}
  Straightforward induction.
\end{proof}

\begin{lemma}
  If $\mathcal{F}\subseteq\mathbb{F}^Q_\phi$ is coherent then $\bigcup \mathcal{F}\in\mathbb{F}^Q_\phi$.
\end{lemma}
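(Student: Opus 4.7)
The plan is to proceed by induction on $\phi$, since every other definition in this section has gone that way. The atomic case is immediate from the fact that $\mathbb{P}^Q_\phi$ is a singleton and any subset of it is coherent; the $\forall x\psi$ case is identical to the $\psi$ case; and the $Q=\exists$ clause of the $\neg\psi$ case reduces directly to the $Q=\forall$ clause for $\psi$ by the recursive definition. For $\phi=\bigwedge_{i\in I}\psi_i$ with $Q=\forall$, the definitions unpack so that $(\bigcup\mathcal{F})(i)=\bigcup\{f(i)\mid f\in\mathcal{F}\}$ and the coherence of $\mathcal{F}$ hands us coherence of each $\{f(i)\mid f\in\mathcal{F}\}\subseteq\mathbb{F}^\forall_{\psi_i}$, so the inductive hypothesis applied component-wise finishes it; the $Q=\exists$ clause is analogous.

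The real work is the case $\phi=\neg\psi$ with $Q=\forall$. Given a coherent $\mathcal{F}\subseteq\mathbb{F}^\forall_\phi$, we already know from the preceding lemma that $\bigcup\mathcal{F}\in\mathbb{P}^\forall_\phi$, so the task is to verify that $\{\bigcup\mathcal{F}\}$ satisfies the two coherence clauses. The first (singleton coherence of each $a\in\dom(\bigcup\mathcal{F})$) is immediate: such an $a$ sits in $\dom(f)$ for some $f\in\mathcal{F}$, and $\{f\}$ being coherent provides exactly $\{a\}$ being coherent.

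For the second clause, fix a finite coherent $\mathcal{A}\subseteq\dom(\bigcup\mathcal{F})$ and set $\mathcal{G}_a=\{f(b)\mid b\subseteq a,\, f\in\mathcal{F},\, b\in\dom(f)\}$, so that by definition $(\bigcup\mathcal{F})(a)=\bigcup\mathcal{G}_a$. I want to show $\{\bigcup\mathcal{G}_a\}_{a\in\mathcal{A}}$ is coherent in $\mathbb{P}^\exists_\phi=\mathbb{P}^\forall_\psi$. Let $\mathcal{B}=\{b\mid b\subseteq a \text{ for some } a\in\mathcal{A},\, b\in\bigcup_{f\in\mathcal{F}}\dom(f)\}$; the assignment $b\mapsto a$ (any $a$ with $b\subseteq a$) is a map $\mathcal{B}\to\mathcal{A}$ witnessing $b\subseteq\sigma(b)$, so part (4) of the lemma about $\bigcup$ upgrades coherence of $\mathcal{A}$ to coherence of $\mathcal{B}$. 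Coherence of $\mathcal{F}$ then yields that $\mathcal{H}:=\{f(b)\mid b\in\mathcal{B},\, f\in\mathcal{F},\, b\in\dom(f)\}=\bigcup_{a\in\mathcal{A}}\mathcal{G}_a$ is coherent in $\mathbb{F}^\forall_\psi$.

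Now the inductive hypothesis applied to $\psi$ tells us $\bigcup\mathcal{H}\in\mathbb{F}^\forall_\psi$, i.e.\ $\{\bigcup\mathcal{H}\}$ is coherent. Since $\mathcal{G}_a\subseteq\mathcal{H}$ for each $a\in\mathcal{A}$, part (4) of the preceding lemma gives $\bigcup\mathcal{G}_a\subseteq\bigcup\mathcal{H}$. Therefore the constant map $\{\bigcup\mathcal{G}_a\}_{a\in\mathcal{A}}\to\{\bigcup\mathcal{H}\}$ sending each $\bigcup\mathcal{G}_a$ to $\bigcup\mathcal{H}$ witnesses the hypothesis of part (4) again, and we conclude $\{\bigcup\mathcal{G}_a\}_{a\in\mathcal{A}}$ is coherent, as required. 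The main obstacle is precisely the step of showing $\mathcal{B}$ is coherent and then lifting coherence from the single object $\bigcup\mathcal{H}$ down to the family $\{\bigcup\mathcal{G}_a\}$; both moves rely essentially on the fact---recorded in part (4) of the $\bigcup$-lemma---that coherence propagates backward along $\subseteq$-maps.
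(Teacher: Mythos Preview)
Your argument is correct and is exactly the kind of induction the paper has in mind when it writes ``Straightforward induction.'' One small slip: when $\phi=\neg\psi$ and $Q=\forall$, the values $f(b)$ and hence the sets $\mathcal{G}_a$, $\mathcal{H}$ live in $\mathbb{P}^\exists_\psi$ (the codomain of $f$), not in $\mathbb{P}^\exists_\phi=\mathbb{P}^\forall_\psi$ as you wrote; this is a harmless typo and does not affect the argument.
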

\begin{proof}
Straightforward induction.
\end{proof}

When working with elements of $\mathbb{F}^Q_{\neg\psi}$, it is convenient to replace coherent subsets of the domain with single elements.

\begin{definition}
  Given $f\in\mathbb{F}^Q_{\neg\psi}$, we define $\tilde f\supseteq f$ by:
\begin{itemize}
\item $\dom(\tilde f)$ consists of all $\bigcup\mathcal{A}$ such that $\mathcal{A}\subseteq\dom(f)$ is coherent,
\item if $a\in\dom(\tilde f)$, so $a=\bigcup\mathcal{A}$ for some coherent $\mathcal{A}\subseteq\dom(f)$, $\tilde f(a)=\bigcup\{f(b)\}_{b\subseteq a, b\in\dom(f)}$.
\end{itemize}
\end{definition}
We think of $\tilde f$ as a completition of $f$; when $\mathcal{A}\subseteq\dom(f)$ is coherent, coherence forces us to have a well defined value for $\tilde f(\bigcup\mathcal{A})$, so we may as well include it.

\begin{lemma}
\begin{enumerate}
\item If $f,g\in\mathbb{F}^Q_{\neg\psi}$ with $g\subseteq f$ then $\tilde g\subseteq\tilde f$.
\item If $f,g\in\mathbb{F}^Q_{\neg\psi}$ with $g\leq f$ then $\tilde g\leq\tilde f$.
\item Whenever $\mathcal{A}\subseteq\dom(\tilde f)$ is coherent, there is an $a\in\dom(\tilde f)$ such that $a=\bigcup\mathcal{A}$.
\item If $f,g\in\mathbb{F}^Q_{\neg\psi}$ and $g\leq \tilde f$ then $g=\tilde g$.
\end{enumerate}
\end{lemma}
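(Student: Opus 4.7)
The plan is to handle all four parts through careful applications of the union lemma established just above, together with the monotonicity conditions built into membership in $\mathbb{P}^\forall_{\neg\psi}$. I would first record the preparatory observation that for $f\in\mathbb{F}^Q_{\neg\psi}$, every element of $\dom(f)$ is singleton-coherent---this is exactly the first clause of coherence at level $\neg\psi$ when $Q=\forall$---so that any coherent $\mathcal{A}\subseteq\dom(f)$ admits a well-defined union $\bigcup\mathcal{A}\in\mathbb{F}^\forall_\psi$, and so that for any $a\in\dom(\tilde f)$ the family $\{b\in\dom(f):b\subseteq a\}$ is coherent (apply property (4) of the union lemma with $\sigma$ sending each $b$ constantly to $\{a\}$). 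Coherence of $f$ then promotes this to coherence of $\{f(b):b\subseteq a,\,b\in\dom(f)\}$, so $\tilde f(a)$ is genuinely defined.

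Parts (1) and (2) are then direct. For (1), given $g\subseteq f$, any coherent $\mathcal{B}\subseteq\dom(g)$ remains coherent in $\dom(f)$, so $\dom(\tilde g)\subseteq\dom(\tilde f)$; the map $g(b)\mapsto f(b)$ satisfies $g(b)\subseteq f(b)$, and property (4) of the union lemma gives $\tilde g(a)\subseteq\tilde f(a)$. For (2), $g\leq f$ forces $\dom(g)=\dom(f)$ and hence equality of $\dom(\tilde g)$ with $\dom(\tilde f)$, and the pairing $\pi=\{(f(b),g(b)):b\subseteq a,\,b\in\dom(g)\}$ combined with property (5) delivers $\tilde g(a)\leq\tilde f(a)$.

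The real work is in part (3). Given a coherent $\mathcal{A}\subseteq\dom(\tilde f)$, I would choose for each $a\in\mathcal{A}$ a coherent witness $\mathcal{A}^a\subseteq\dom(f)$ with $a=\bigcup\mathcal{A}^a$, form $\mathcal{A}^*:=\bigcup_{a\in\mathcal{A}}\mathcal{A}^a$, and prove both that $\mathcal{A}^*$ is coherent and that $\bigcup\mathcal{A}^*=\bigcup\mathcal{A}$; this certifies that $\bigcup\mathcal{A}\in\dom(\tilde f)$. For $\bigcup\mathcal{A}^*\subseteq\bigcup\mathcal{A}$ I would use property (4) with the map $\sigma\colon\mathcal{A}^*\to\mathcal{A}$ sending $b$ to any $a$ with $b\in\mathcal{A}^a$: since $b\subseteq\bigcup\mathcal{A}^a=a$ by property (2), this single application simultaneously yields coherence of $\mathcal{A}^*$ and the inclusion. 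For the reverse, the containment $\mathcal{A}^a\subseteq\mathcal{A}^*$ combined with property (4) (identity $\sigma$) produces $a\subseteq\bigcup\mathcal{A}^*$ for every $a\in\mathcal{A}$, and a final application of property (4) with the constant map $\mathcal{A}\to\{\bigcup\mathcal{A}^*\}$ upgrades this pointwise bound to $\bigcup\mathcal{A}\subseteq\bigcup\mathcal{A}^*$. The main obstacle is keeping these three invocations of property (4) untangled; the choices $\mathcal{A}^a$ are arbitrary but inessential.

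Part (4) follows quickly once (3) is in hand. From $g\leq\tilde f$ we obtain $\dom(g)=\dom(\tilde f)$, which by (3) is closed under coherent unions, so $\dom(\tilde g)=\dom(g)$. For any $a\in\dom(g)$, the subset-monotonicity clause in the definition of $\mathbb{P}^\forall_{\neg\psi}$ applied to $g$ yields $g(b)\subseteq g(a)$ for every $b\in\dom(g)$ with $b\subseteq a$; property (4) with the constant map to $\{g(a)\}$ then gives $\tilde g(a)\subseteq g(a)$, and the reverse inclusion comes from property (2) of the union lemma applied to $b=a$. Hence $g(a)=\tilde g(a)$ for every $a$, so $g=\tilde g$. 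Notably, the hypothesis $g\leq\tilde f$ is used only to secure closure of $\dom(g)$ under coherent unions.
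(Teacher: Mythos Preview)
The paper states this lemma without proof, so there is no argument to compare against. Your proof is correct and is the natural verification the paper is implicitly leaving to the reader: each part reduces to repeated applications of properties (2)--(5) of the union lemma together with the monotonicity clauses in the definition of $\mathbb{P}^\forall_{\neg\psi}$, exactly as you carry out. The argument for part (3)---pulling the coherent $\mathcal{A}\subseteq\dom(\tilde f)$ back to a coherent $\mathcal{A}^*\subseteq\dom(f)$ via witnesses and then matching the two unions with three uses of property (4)---is the expected one, and your remark that the choice of witnesses $\mathcal{A}^a$ is inessential is right. One small point you use without comment is antisymmetry of $\subseteq$ (needed when you conclude $\bigcup\mathcal{A}=\bigcup\mathcal{A}^*$ and $\tilde g(a)=g(a)$); the paper never states this, but it is an immediate induction on $\phi$ parallel to the stated antisymmetry of $\leq$.
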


\begin{lemma}

  If $f,g_0,g_1\in\mathbb{F}^Q_\phi$ and both $g_0\leq f$ and $g_1\leq f$ then $\min\{f_0,g_1\}\in\mathbb{F}^Q_\phi$.
\end{lemma}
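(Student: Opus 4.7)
I would proceed by induction on $\phi$, with the usual case analysis: when $\phi$ is atomic, when $\phi$ is $\forall x\psi$, when $Q$ is $\exists$ with $\phi=\neg\psi$, or when $\phi$ is $\bigwedge_{i\in I}\psi_i$, the claim either is immediate or reduces transparently to the inductive hypothesis. The essential case is therefore $\phi=\neg\psi$ with $Q=\forall$, and I would concentrate on it.

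In this case, the corresponding clause of the preceding $\min$ lemma already gives $\min\{g_0,g_1\}\in\mathbb{P}^\forall_{\neg\psi}$, with common domain $\dom(f)$. To upgrade to membership in $\mathbb{F}^\forall_{\neg\psi}$, I must verify that the singleton $\{\min\{g_0,g_1\}\}$ satisfies the two clauses of the coherence definition for $\neg\psi/\forall$. The first clause, that $\{a\}$ is coherent for every $a\in\dom(\min\{g_0,g_1\})=\dom(f)$, is inherited directly from $\{f\}$ being coherent.

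The real content is in the second clause: given a finite coherent $\mathcal{A}\subseteq\dom(f)$, I must show that the family $\{\min\{g_0(a),g_1(a)\}\}_{a\in\mathcal{A}}$ is coherent in $\mathbb{P}^\exists_\psi$. Applying coherence of $\{f\}$, $\{g_0\}$, and $\{g_1\}$ to $\mathcal{A}$ yields coherence of $\{f(a)\}_{a\in\mathcal{A}}$, $\{g_0(a)\}_{a\in\mathcal{A}}$, and $\{g_1(a)\}_{a\in\mathcal{A}}$ respectively, and by hypothesis $g_0(a),g_1(a)\leq f(a)$ for every $a\in\mathcal{A}$. The inductive hypothesis on $\psi$ gives $\min\{g_0(a),g_1(a)\}\in\mathbb{F}^\exists_\psi$ pointwise, but what is needed is coherence of the whole family.

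The main obstacle is precisely this upgrade from pointwise to family-wise coherence, since the coherence relation is not manifestly preserved when members of a family are replaced by $\leq$-smaller elements. I would handle this with an auxiliary claim, proved by a parallel structural induction on $\psi$: \emph{if $\{h_a\}_{a\in\mathcal{A}}\subseteq\mathbb{F}^{Q'}_\psi$ is coherent and $\{h'_a\}_{a\in\mathcal{A}}\subseteq\mathbb{F}^{Q'}_\psi$ satisfies $h'_a\leq h_a$ for each $a\in\mathcal{A}$, then $\{h'_a\}_{a\in\mathcal{A}}$ is coherent}. The inductive verification of this sub-claim unfolds case by case using the definitions of coherence, exactly mirroring the outer induction; in the $\bigwedge$/$\forall$ and $\neg/\forall$ clauses one pushes the comparison through the common domain structure, and in the dual $\exists$ clauses one uses that $\leq$ on $\mathbb{P}^\exists$ is a bisimulation-like witness-matching relation. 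Applying this sub-claim with $h_a=g_0(a)$ (say) and $h'_a=\min\{g_0(a),g_1(a)\}$ produces the required coherence of $\{\min\{g_0(a),g_1(a)\}\}_{a\in\mathcal{A}}$, thereby completing the main case and the induction.
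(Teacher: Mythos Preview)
Your outline is sound up to and including the identification of the main obstacle. The problem is the auxiliary claim you lean on to resolve it: \emph{``if $\{h_a\}_{a\in\mathcal{A}}$ is coherent and $h'_a\leq h_a$ for each $a$, then $\{h'_a\}_{a\in\mathcal{A}}$ is coherent''} is false as stated. Take $\psi=\bigwedge_{i\in\mathbb{N}}\theta_i$ with each $\theta_i$ atomic and $Q'=\forall$ (this arises as the $\exists$-side for $\neg\psi$, so it is directly in scope). Elements of $\mathbb{P}^\forall_\psi$ are essentially finite subsets of $\mathbb{N}$; the order $\leq$ is inclusion of domains, and coherence of a family requires \emph{equal} domains. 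Now let $h_0=h_1$ have domain $\{0,1\}$, and let $h'_0,h'_1$ have domains $\{0\},\{1\}$ respectively: then $h'_j\leq h_j$ but $\{h'_0,h'_1\}$ is not coherent. So the hand-waved ``inductive verification \ldots unfolds case by case'' does not go through; coherence is simply not downward closed under $\leq$.

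What you actually need is not preservation under arbitrary $\leq$-shrinking but preservation under the specific operation $a\mapsto\min\{g_0(a),g_1(a)\}$, where both input families $\{g_0(a)\}_a$ and $\{g_1(a)\}_a$ are themselves coherent and dominated by a common coherent $\{f(a)\}_a$. The paper exploits exactly this extra structure, but through a different mechanism: it first passes to $\tilde f$ (and hence $\tilde g_0,\tilde g_1$), the closure of $f$ under unions of coherent subsets of its domain. When $f=\tilde f$, any finite coherent $\mathcal{A}\subseteq\dom(f)$ has $\bigcup\mathcal{A}\in\dom(f)$, so every $\min\{g_0,g_1\}(a')$ for $a'\in\mathcal{A}$ is $\subseteq\min\{g_0,g_1\}(\bigcup\mathcal{A})$, and coherence of the family follows from coherence of the single element $\min\{g_0(\bigcup\mathcal{A}),g_1(\bigcup\mathcal{A})\}$, which is the inductive hypothesis. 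For general $f$ one then observes $\min\{g_0,g_1\}=\min\{\tilde g_0,\tilde g_1\}\upharpoonright f$ and invokes the immediately preceding lemma that $\upharpoonright$ preserves membership in $\mathbb{F}^Q_\phi$. This route sidesteps the need for any $\leq$-monotonicity of coherence.
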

\begin{proof}
  By induction on $\phi$.

  Suppose $f,g_0,g_1\in\mathbb{F}^\forall_{\neg\psi}$.  When $f=\tilde f$ (and therefore $g_0=\tilde g_0, g_1=\tilde g_1$) it is immediate from the inductive hypothesis that $\min\{g_0,g_1\}\in\mathbb{F}^\forall_{\neg\psi}$.  So given $f\subsetneq \tilde f$, observe that $\min\{g_0,g_1\}=\min\{\tilde g_0,\tilde g_1\}\upharpoonright f$, so the claim follows from the previous lemma.
\end{proof}

\begin{lemma}
    If $f,f',f^*\in\mathbb{F}^Q_\phi$ with $f'\leq f$ and $f^*\subseteq f$ then $f'\upharpoonright f^*\in\mathbb{F}^Q_\phi$.
\end{lemma}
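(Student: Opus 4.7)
The proof will proceed by induction on $\phi$, following the same pattern as the other lemmas in this subsection. The atomic, $\forall x \psi$, and $\bigwedge_{i \in I} \psi_i$ cases, as well as the $\neg\psi$ case with $Q$ being $\exists$, reduce immediately to the inductive hypothesis (for the conjunction case, coherence of the singleton $\{f' \upharpoonright f^*\}$ unpacks into coherence of each component $\{f'(i) \upharpoonright f^*(i)\}$, which is the inductive hypothesis). So, as the paper flags, the work is in the main case where $\phi = \neg\psi$ and $Q = \forall$.

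For this case I need to show $\{f' \upharpoonright f^*\}$ is coherent, which unpacks into two requirements from the definition. First, for each $a \in \dom(f' \upharpoonright f^*) = \dom(f^*) \subseteq \dom(f)$, the set $\{a\}$ must be coherent; this is immediate from $f \in \mathbb{F}^\forall_{\neg\psi}$, applied to the first coherence clause for $\{f\}$. Second, for each finite coherent $\mathcal{A} \subseteq \dom(f^*)$, I must show that
\[
\{(f' \upharpoonright f^*)(a)\}_{a \in \mathcal{A}} = \{f'(a) \upharpoonright f^*(a)\}_{a \in \mathcal{A}}
\]
is coherent as a subset of $\mathbb{P}^\exists_\psi$.

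The key idea is to push this coherence question down via property (3) of the previous lemma on $\upharpoonright$, which gives $f'(a) \upharpoonright f^*(a) \subseteq f'(a)$. Since $\{f'\}$ is coherent and $\mathcal{A} \subseteq \dom(f^*) \subseteq \dom(f')$ is coherent, the second clause of the coherence definition for $\{f'\}$ yields that $\{f'(a)\}_{a \in \mathcal{A}}$ is coherent. Then the map $\sigma : \{f'(a) \upharpoonright f^*(a)\}_{a \in \mathcal{A}} \to \{f'(a)\}_{a \in \mathcal{A}}$ sending each $f'(a) \upharpoonright f^*(a)$ to $f'(a)$ satisfies the hypothesis of part (4) of the $\bigcup$-lemma, which directly delivers coherence of the smaller family. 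To even form each $f'(a) \upharpoonright f^*(a)$, I invoke the inductive hypothesis at $\psi$: this requires $f(a), f'(a), f^*(a) \in \mathbb{F}^\exists_\psi$ and the relations $f'(a) \leq f(a)$, $f^*(a) \subseteq f(a)$, all of which follow from $f, f', f^* \in \mathbb{F}^\forall_{\neg\psi}$ (via the second coherence clause applied to the singleton $\{a\}$) together with the componentwise definitions of $\leq$ and $\subseteq$ on $\mathbb{P}^\forall_{\neg\psi}$.

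The main obstacle I expect is a bookkeeping one rather than a conceptual one: the definition of coherence for $\{f' \upharpoonright f^*\}$ quantifies over all finite coherent subfamilies of the domain, and it is easy to confuse these with the structural requirements already built into $\mathbb{F}$. The trick is to separate the pointwise use of the inductive hypothesis (to get each $f'(a) \upharpoonright f^*(a) \in \mathbb{F}^\exists_\psi$) from the global argument (to get that the whole collection indexed by $\mathcal{A}$ is coherent), with the latter handled cleanly by citing monotonicity of $\upharpoonright$ under $\subseteq$ and part (4) of the $\bigcup$-lemma rather than reproving anything by hand.
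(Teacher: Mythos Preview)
Your argument is correct, and it differs from the paper's proof in a meaningful way. The paper handles the main case $\phi=\neg\psi$, $Q=\forall$ via the auxiliary $\tilde{\ }$ construction: it first treats the case $f^*=\tilde f^*$, then reduces the general case by writing $f'\upharpoonright f^*=(\tilde f'\upharpoonright\tilde f^*)\upharpoonright f^*$ and noting that every $(f'\upharpoonright f^*)(b)$ with $b\in\mathcal{A}$ lies below the single element $(\tilde f'\upharpoonright\tilde f^*)(\bigcup\mathcal{A})$, whence coherence follows. Your route is more direct: from property~(3) of the $\upharpoonright$ lemma you get $(f'\upharpoonright f^*)(a)\subseteq f'(a)$, and since $\{f'(a)\}_{a\in\mathcal{A}}$ is already coherent by the coherence of $\{f'\}$, part~(4) of the $\bigcup$-lemma finishes the job immediately. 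Your argument in fact does not use the inductive hypothesis in the main case at all; the induction is only needed for the $\bigwedge$ clause. The paper's detour through $\tilde f^*$ buys reusability---the same trick is invoked verbatim in the next lemma on $\min$---while your argument is shorter and more self-contained for this particular statement. One small expository point: you do not need the inductive hypothesis ``to even form'' $f'(a)\upharpoonright f^*(a)$, since $\upharpoonright$ is defined on all of $\mathbb{P}^\exists_\psi$; the inductive hypothesis would only be needed if you wanted $f'(a)\upharpoonright f^*(a)\in\mathbb{F}^\exists_\psi$ pointwise, which your global coherence argument via part~(4) renders unnecessary.
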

\begin{proof}
    By induction on $\phi$.

    Suppose $f,f',f^*\in\mathbb{F}^\forall_{\neg\psi}$.  When $f^*=\tilde f^*$, it is immediate from the inductive hypothesis that $f'\upharpoonright f^*\in\mathbb{F}^Q_\phi$.  So suppose $f^*\subsetneq\tilde f^*$.  Then $f'\upharpoonright f^*=(\tilde f'\upharpoonright \tilde f^*)\upharpoonright f^*$, so for any $\mathcal{A}\subseteq\dom(f'\upharpoonright f^*)$, we have $(f'\upharpoonright f^*)(b)\subseteq (\tilde f'\upharpoonright \tilde f^*)(\bigcup\mathcal{A})$ for all $b\in\mathcal{A}$, so we certainly have that $\{(f'\upharpoonright f^*)(b)\}_{b\in\mathcal{A}}$ is coherent.
\end{proof}

\begin{lemma}\label{thm:coherent_extension}
Let $f,g,g'\in\mathbb{F}^Q_\phi$ be given with $g'\leq g\subseteq f$.  Let $\mathcal{K}\subseteq\mathbb{F}^Q_\phi$ be given (not necessarily coherent) so that for each $k\in \mathcal{K}$, $k\leq f$ and $k\upharpoonright g\leq g'$.


Then there is an $f'\leq f$ such that $g'\subseteq f'$ and, for each $k\in K$, $k\leq f'$.
\end{lemma}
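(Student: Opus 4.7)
The proof proceeds by induction on $\phi$. The atomic, $\forall x$, and $\bigwedge_{i \in I} \psi_i$ cases, along with the $\neg\psi$ case when $Q = \exists$, either hold trivially or follow directly from the inductive hypothesis applied to the components; the substantive case, which we treat in detail, is $\phi = \neg\psi$ with $Q = \forall$.

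In this case $f, g, g'$ are coherent partial functions from $\mathbb{P}^\forall_\psi$ to $\mathbb{P}^\exists_\psi$, and any $f'$ with $f' \leq f$ must have $\dom(f') = \dom(f)$. The plan is to construct $f'(a)$ separately for each $a \in \dom(f)$ by invoking the inductive hypothesis on $\psi$ (with the $Q$ there set to $\exists$). When $a \in \dom(g) = \dom(g')$, we apply the IH to $f(a), g(a), g'(a)$ together with $\mathcal{K}_a := \{k(a) : k \in \mathcal{K}\}$; here the identity $(k \upharpoonright g)(a) = k(a) \upharpoonright g(a)$, read directly off the recursive definition of $\upharpoonright$ at the $\neg$-step, converts the hypothesis $k \upharpoonright g \leq g'$ into precisely the input $k(a) \upharpoonright g(a) \leq g'(a)$ required by the IH. For $a \in \dom(f) \setminus \dom(g)$, we instead apply the IH using a canonical minimum element $\bot \in \mathbb{F}^\exists_\psi$ in place of both $g$ and $g'$, so that the compatibility hypothesis becomes vacuous; such a $\bot$ (for example the empty partial function when $\psi$ is itself a negation) exists by a straightforward parallel induction on $\psi$.

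The three required conclusions $f' \leq f$, $g' \subseteq f'$, and $k \leq f'$ for every $k \in \mathcal{K}$ then follow immediately from the outputs of the IH invocations, because at this level both $\leq$ and $\subseteq$ are pointwise. The hard part will be verifying that the resulting $f'$ itself lies in $\mathbb{F}^\forall_{\neg\psi}$: one must check both monotonicity conditions of $\mathbb{P}^\forall_{\neg\psi}$ (relating $f'(a)$ to $f'(a')$ whenever $a' \subseteq a$ or $a' \leq a$) and, more delicately, the coherence condition that $\{f'(a)\}_{a \in \mathcal{A}}$ be coherent for every coherent $\mathcal{A} \subseteq \dom(f')$. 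These are not automatic from independent pointwise IH calls, so the construction must be coordinated across $a$'s — a natural remedy is to first carry out the construction on the completed domain $\dom(\tilde f)$ (exploiting the $\tilde{\cdot}$ construction introduced just above, together with the earlier lemmas showing that $\bigcup$, $\min$, and $\upharpoonright$ respect $\leq$, $\subseteq$, and coherence) and then restrict back to $\dom(f)$.
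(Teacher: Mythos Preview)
Your proposal correctly identifies the main difficulty but does not actually resolve it, and the suggested remedy is insufficient.

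The paper's proof does begin, as you suggest, by reducing to the case $f=\tilde f$. But after that it does \emph{not} make independent pointwise calls to the inductive hypothesis. Instead it builds $f'$ sequentially: it orders $\dom(f)\setminus\dom(g')$ so that each new element $a$ is added only after every $a'$ with $a'\subseteq a$ or $a'\leq a$ has already been handled. At the step where $a$ is added, the inductive hypothesis is applied to $f(a)$, $d=\bigcup\{g(b):b\subseteq a,\ b\in\dom(g)\}$, $d'=\bigcup\{g'(b):b\subseteq a,\ b\in\dom(g)\}$, together with a set $\mathcal{L}_0\cup\mathcal{L}_1$ playing the role of $\mathcal{K}$. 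Here $\mathcal{L}_1$ is essentially your $\mathcal{K}_a$, but the crucial new ingredient is $\mathcal{L}_0=\{g'(a')\upharpoonright f(a):a'<a\}$, i.e.\ the already-constructed values of $f'$ at earlier arguments, restricted to $f(a)$. The output of the IH then guarantees $\ell\leq f'(a)$ for each $\ell\in\mathcal{L}_0$, and \emph{this} is what forces the $\leq$-monotonicity clause for $f'$; similarly the use of $d,d'$ (rather than $g(a),g'(a)$ at a single point) is what handles the $\subseteq$-monotonicity. The verification that $\ell\upharpoonright d\leq d'$ for $\ell\in\mathcal{L}_0$ is a genuine calculation using the interaction of $\upharpoonright$ with $\leq$ and $\subseteq$.

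Your plan of calling the IH at each $a$ with only $\mathcal{K}_a=\{k(a):k\in\mathcal{K}\}$ gives you no relationship whatsoever between $f'(a)$ and $f'(a')$ for distinct $a,a'$; passing to $\dom(\tilde f)$ first does not help, because independent IH calls on $\dom(\tilde f)$ are just as uncoordinated. The missing idea is precisely to feed the \emph{previously constructed} values back into each subsequent IH call via $\mathcal{L}_0$. (Your treatment of $a\notin\dom(g)$ via a minimum element $\bot$ would work pointwise, but the paper instead handles the case where no $b\subseteq a$ lies in $\dom(g)$ by simply taking $f'(a)=f(a)$; either way, the monotonicity issue is the real obstacle.)
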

\begin{proof}
  By induction on $\phi$.

  When $\phi$ is $\neg\psi$ and $Q$ is $\forall$, let $f, g,g',\mathcal{K}$ be given.  We claim that, without loss of generality, we may assume that $f=\tilde f$ (and therefore $k=\tilde k$ for all $k\in \mathcal{K}$).  For suppose we find $\tilde f'\leq \tilde f$ with each $g'\subseteq \tilde f'$ and each $\tilde k\leq \tilde f'$; then we may take $f'=\tilde f'\upharpoonright f$.

  So we assume that $f=\tilde f$.  We will construct $f'\supseteq g'$ by adding elements to the domain one by one, so it suffices to construct an $f'$ with $g'\subseteq f'$ and $\dom(f')=\dom(g')\cup\{a\}$ for some $a\in\dom(f)\setminus\dom(g')$ such that for any $a'\in\dom(f)$ with either $a'\subseteq a$ or $a'\leq a$, $a'\in\dom(g')$.

Let $\mathcal{B}$ be the set of $b$ such that $b\subseteq a$ and $b\in\dom(g)$.  If $\mathcal{B}$ is empty then we may simply set $f'(a)=f(a)$; this satisfies the monotonicity requirement since if $a'\in\dom(g')$ with $a'\leq a$, we have $g'(a')\leq f(a')$, and $f(a')\upharpoonright f(a)\leq f(a)$, so $g'(a')\upharpoonright f(a)\leq f(a)$ as well.

So suppose $\mathcal{B}$ is non-empty.  Let $\mathcal{D}=\{g(b)\}_{b\in \mathcal{B}}$ and $\mathcal{D}'=\{g'(b)\}_{b\in\mathcal{B}}$ and set $d=\bigcup \mathcal{D}$ and $d'=\bigcup\mathcal{D}'$.  Then $d'\leq d\subseteq f(a)$.  

Let $\mathcal{L}_0$ consist of all elements of the form $g'(a')\upharpoonright f(a)$ with $a'< a$ and let $\mathcal{L}_1$ consist of all $k(a')\upharpoonright f(a)$ with $a'\leq a$ and $k\in \mathcal{K}$.

We claim that $f(a)$, $d$, $d'$, $\mathcal{L}_0\cup \mathcal{L}_1$ satisfy the assumptions of the lemma so that we can apply the inductive hypothesis; the condition remaining to check is that, for any $\ell\in \mathcal{L}_0\cup \mathcal{L}_1$, $\ell\upharpoonright d\leq d'$.  First, suppose $\ell\in \mathcal{L}_0$, so $\ell=g'(a')\upharpoonright f(a)$.  It suffices to show that, for each $b\in\mathcal{B}$, $\ell\upharpoonright g'(b)\leq g'(b)$.  Then we have $\ell\upharpoonright g'(b)=g'(a')\upharpoonright g'(b)=g'(a'\upharpoonright b)\upharpoonright g'(b)\leq g'(b)$ as needed.

Next, suppose $\ell\in L_1$, so $\ell=k(a')\upharpoonright f(a)$.  We carry out a similar argument: $\ell\upharpoonright g'(b)=k(a')\upharpoonright g'(b)=(k\upharpoonright g')(a')\upharpoonright g'(b)\leq g'(a')\upharpoonright g'(b)\leq g'(b)$.

So the inductive hypothesis gives us a $b^*\leq f(a)$ with $g'(b)\subseteq b^*$ for all $b\in \mathcal{B}$ and $k(a)\leq b^*$ for all $k\in\mathcal{B}$, and we define $f'\supseteq g'$ by $f'(a)=b^*$.
\end{proof}

\subsection{Satisfaction}

Because we are dealing with partial functions, it is possible that attempting to evaluate these functions could lead us outside the domain.  We wish to define the case where enough values are defined for the process of bounding a formula to complete.

\begin{definition}
  When $a\in\mathbb{F}^\forall_\phi$ and $e\in\mathbb{F}^\exists_\phi$, we define when the pair $(a,e)$ is \emph{decisive for $\phi$} inductively:
  \begin{itemize}
  \item When $\phi$ is atomic, $(a,e)$ is always decisive for $\phi$.
  \item When $\phi$ is $\neg\psi$, $(a,e)$ is decisive for $\phi$ if for every $e'\leq e$ there is an $e^*\subseteq e'$ such that $(e',a(e^*))$ is decisive.
  \item When $\phi$ is $\forall x\psi$, $(a,e)$ is decisive for $\phi$ iff $(a,e)$ is decisive for $\psi$.
  \item When $\phi$ is $\bigwedge_{i\in I}\psi_i$, $(a,e)$ is decisive for $\phi$ iff for each $i\in\dom(a)$ there is an $r\in e\cap\mathbb{F}^\exists_{\psi_i}$ such that $(a(i),r)$ is decisive for $\psi_i$.
  \end{itemize}
\end{definition}

\begin{lemma}\label{lemma:decisive_montone}
If $(a,e)$ is decisive for $\phi$, $a'\leq a$, and $e'\leq e$ then $(a',e')$ is decisive for $\phi$ as well.
\end{lemma}
\begin{proof}
By induction on $\phi$.  

When $\phi$ is $\neg\psi$, for any $\hat e\leq e'\leq e$, there is an $e^*\subseteq \hat e$ such that $(e',a(e^*))$ is decisive.  Since $a'(e^*)\leq a(e^*)$, by the inductive hypothesis, also $(e',a'(e^*))$ is decisive as needed.

When $\phi$ is $\bigwedge_{i\in I}\psi_i$, we have $\dom(a')\subseteq\dom(a)$, so for each $i\in\dom(a')$ there is an $r\in e\cap\mathbb{F}^\exists_{\psi_i}$ such that $(a(i),r)$ is decisive for $\psi_i$; there is an $r'\in e'$ with $r'\leq r$, so by the inductive hypothesis, $(a'(i),r')$ is decisive for $\psi_i$ as well.
\end{proof}

\begin{lemma}
If $(a,e)$ is decisive for $\phi$, $a\subseteq a'$ and $e\subseteq e'$ then $(a',e')$ is decisive for $\phi$ as well.
\end{lemma}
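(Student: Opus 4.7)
The plan is to prove the lemma by induction on $\phi$, mirroring the structure of the preceding lemma for the $\leq$ case. The atomic and $\forall x\psi$ cases are immediate, and the conjunction case is essentially a direct unfolding; the interesting work is in the $\neg\psi$ case, where I will need to bridge the $\subseteq$ and $\leq$ relations using the $\upharpoonright$ operator.

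For the case $\phi = \bigwedge_{i\in I}\psi_i$ with $Q=\forall$: since $a\subseteq a'$ forces $\dom(a')=\dom(a)$ with $a(i)\subseteq a'(i)$, fix $i\in\dom(a')$ and use decisiveness of $(a,e)$ to obtain $r\in e\cap\mathbb{F}^\exists_{\psi_i}$ with $(a(i),r)$ decisive for $\psi_i$. Since $e\subseteq e'$, pick $r'\in e'$ with $r\subseteq r'$; this $r'$ must lie in $\mathbb{F}^\exists_{\psi_i}$, and the inductive hypothesis applied to $a(i)\subseteq a'(i)$ and $r\subseteq r'$ yields $(a'(i),r')$ decisive.

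The main case, $\phi = \neg\psi$ with $Q=\forall$, is the obstacle. We are given $(a,e)$ decisive, $a\subseteq a'$, and $e\subseteq e'$, and we must show that for every $\hat e\leq e'$ there exists $e^*\subseteq \hat e$ with $(\hat e, a'(e^*))$ decisive. The issue is that $\hat e$ sits below $e'$, not below $e$, so we cannot feed it directly into the decisiveness hypothesis for $(a,e)$. My remedy is to restrict: both $\hat e\leq e'$ and $e\subseteq e'$ hold, so the element $\hat e \upharpoonright e$ is defined, and by the basic properties of $\upharpoonright$ satisfies $\hat e\upharpoonright e \leq e$ and $\hat e\upharpoonright e\subseteq \hat e$. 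Decisiveness of $(a,e)$ applied to $\hat e\upharpoonright e$ furnishes an $e^*\subseteq \hat e\upharpoonright e$ with $e^*\in\dom(a)$ and $(\hat e\upharpoonright e,\,a(e^*))$ decisive for $\psi$. Then $e^*\subseteq \hat e$, and since $a\subseteq a'$ gives $e^*\in\dom(a')$ and $a(e^*)\subseteq a'(e^*)$, I apply the inductive hypothesis with $\hat e\upharpoonright e\subseteq \hat e$ and $a(e^*)\subseteq a'(e^*)$ to conclude $(\hat e,\,a'(e^*))$ is decisive, finishing the case.

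The key technical point worth stressing is that the compatibility between $\upharpoonright$ and $\subseteq$ (specifically $f'\upharpoonright f^*\subseteq f'$, from the earlier lemma on $\upharpoonright$) is exactly what allows the induction to close: one uses $\upharpoonright$ to descend into the range where the hypothesis $(a,e)$ is known to be decisive, and the inductive hypothesis to lift the conclusion back up. No separate work is needed for the $Q=\exists$ subcases of $\neg\psi$ or for the $\exists$-conjunction case, since decisiveness there was defined to match the dual case from $\psi_i$.
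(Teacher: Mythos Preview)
Your argument is correct and follows essentially the same route as the paper: in the $\neg\psi$ case you restrict $\hat e$ to $\hat e\upharpoonright e$ to land under $e$, invoke decisiveness of $(a,e)$ there, and then lift back up via the inductive hypothesis using $\hat e\upharpoonright e\subseteq\hat e$ and $a(e^*)\subseteq a'(e^*)$; the conjunction case is handled identically. One minor remark: decisiveness is not parameterized by a $Q\in\{\forall,\exists\}$, so your references to ``$Q=\forall$'' and ``$Q=\exists$'' subcases are superfluous, but this does not affect the substance.
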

\begin{proof}
By induction on $\phi$. 

When $\phi$ is $\neg\psi$, for every $\hat e\leq e'$, we have a unique $\hat e\upharpoonright e\leq e$, and therefore an $e^*\subseteq \hat e\upharpoonright e\subseteq \hat e$ such that $(\hat e\upharpoonright e,a(e^*))$ is decisive for $\psi$.  Since $a(e^*)\subseteq a'(e^*)$, also $(\hat e,a'(e^*))$ is decisive for $\psi$.

When $\phi$ is $\bigwedge_{i\in I}\psi_i$, we have we have $\dom(a')\subseteq\dom(a)$, so for each $i\in\dom(a')$ there is an $r\in e\cap\mathbb{F}^\exists_{\psi_i}$ such that $(a(i),r)$ is decisive for $\psi_i$; there is an $r'\in e'$ with $r\subseteq r'$, so by the inductive hypothesis, $(a'(i),r')$ is decisive for $\psi_i$ as well.
\end{proof}

\begin{definition}
  Suppose $a\in\mathbb{F}^\forall_\phi$ and $e\in\mathbb{F}^\exists_\phi$ and $(a,e)$ is decisive for $\phi$.  We define when $\mathfrak{M}\vDash^{\leq a,e}\phi[\vec b]$ holds recursively:
  \begin{itemize}
  \item If $\phi$ is atomic, $\mathfrak{M}\vDash^{\leq a,e}\phi[\vec b]$ iff $\mathfrak{M}\vDash\phi[\vec b]$.
  \item If $\phi$ is $\neg\psi$, $\mathfrak{M}\vDash^{\leq a,e}\phi[\vec b]$ holds iff there is an $e'\leq e$ and an $e^*\subseteq e'$ such that $(e',a(e^*))$ is decisive for $\psi$ and $\mathfrak{M}\not\vDash^{\leq e',a(e^*)}\psi[\vec b]$.
  \item If $\phi$ is $\forall x\psi$, $\mathfrak{M}\vDash^{\leq a,e}\phi[\vec b]$ iff for every $b'\in|\mathfrak{M}|$, $\mathfrak{M}\vDash^{\leq a,e}\psi[\vec b,x\mapsto b']$.
  \item If $\phi$ is $\bigwedge_{i\in I}\psi_i$, $\mathfrak{M}\vDash^{\leq a,e}\phi[\vec b]$ iff for each $i\in\dom(a)$, letting $r\in e\cap\mathbb{P}^\exists_{\psi_i}$, we have $\mathfrak{M}\vDash^{\leq a(i),r}\psi_i[\vec b]$.
  \end{itemize}
\end{definition}

\begin{lemma}
If $\{a_0,a_1\}$ and $\{e_0,e_1\}$ are both coherent and $(a_0,e_0)$ and $(a_1,e_1)$ are both decisive for $\phi$ then 
\[\mathfrak{M}\vDash^{\leq a_0,e_0}\phi[\vec b]\ \Leftrightarrow \mathfrak{M}\vDash^{\leq a_1,e_1}\phi[\vec b].\]
\end{lemma}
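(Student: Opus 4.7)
I plan to argue by induction on $\phi$. The atomic case is immediate, as $\mathfrak{M}\vDash^{\leq a,e}\phi[\vec b]$ does not depend on $a$ or $e$. The $\forall x\psi$ case reduces directly to the inductive hypothesis because $\mathbb{F}^Q$, coherence, decisiveness, and satisfaction all agree with their $\psi$ counterparts. For $\phi=\bigwedge_{i\in I}\psi_i$, coherence of $\{a_0,a_1\}$ in $\mathbb{F}^\forall_\phi$ forces $\dom(a_0)=\dom(a_1)$ with $\{a_0(i),a_1(i)\}$ coherent at each $i$; decisiveness of $(a_k,e_k)$ supplies the (unique) $r_k\in e_k\cap\mathbb{F}^\exists_{\psi_i}$ for which $(a_k(i),r_k)$ is decisive for $\psi_i$, and $\{r_0,r_1\}$ is coherent by coherence of $\{e_0,e_1\}$. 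Applying the inductive hypothesis $\psi_i$-wise closes this case.

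The real work is the case $\phi=\neg\psi$. By symmetry, assume $\mathfrak{M}\vDash^{\leq a_0,e_0}\neg\psi[\vec b]$, witnessed by $e'_0\leq e_0$ and $e^*_0\subseteq e'_0$ with $(e'_0,a_0(e^*_0))$ decisive for $\psi$ and $\mathfrak{M}\not\vDash^{\leq e'_0,a_0(e^*_0)}\psi[\vec b]$; I must produce an analogous witness $e'_1,e^*_1$ for $(a_1,e_1)$. The idea is to pass through a common refinement: since $\{e_0,e_1\}$ is coherent, $\hat e=\bigcup\{e_0,e_1\}\in\mathbb{F}^\exists_{\neg\psi}$ and $e_0,e_1\subseteq\hat e$. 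I apply Lemma~\ref{thm:coherent_extension} with $f=\hat e$, $g=e_0$, $g'=e'_0$, and $\mathcal{K}=\emptyset$ to obtain $\hat e'\leq\hat e$ with $e'_0\subseteq\hat e'$, and then set $e'_1:=\hat e'\upharpoonright e_1$, giving $e'_1\leq e_1$ and $e'_1\subseteq\hat e'$. Decisiveness of $(a_1,e_1)$ applied at $e'_1$ then yields $e^*_1\subseteq e'_1$ with $(e'_1,a_1(e^*_1))$ decisive for $\psi$.

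It remains to apply the inductive hypothesis to the pairs $(e'_0,a_0(e^*_0))$ and $(e'_1,a_1(e^*_1))$, for which I need coherence of $\{e'_0,e'_1\}$ and of $\{a_0(e^*_0),a_1(e^*_1)\}$. Both $e'_0$ and $e'_1$ are $\subseteq\hat e'\in\mathbb{F}^\exists_{\neg\psi}$; mapping both to $\hat e'$ and invoking the clause ``$g\subseteq\sigma(g)$ for all $g\in\mathcal{G}$ implies $\mathcal{G}$ is coherent'' (part (4) of the lemma characterising $\bigcup$) yields coherence of $\{e'_0,e'_1\}$. The same device, applied to $e^*_0\subseteq e'_0\subseteq\hat e'$ and $e^*_1\subseteq e'_1\subseteq\hat e'$, shows $\{e^*_0,e^*_1\}$ is coherent; then the second clause in the definition of coherence for $\mathbb{F}^\forall_{\neg\psi}$, applied to the coherent $\mathcal{A}=\{e^*_0,e^*_1\}$ and to the coherent $\{a_0,a_1\}$, yields $\{a_0(e^*_0),a_1(e^*_1)\}$ coherent. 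The inductive hypothesis then delivers $\mathfrak{M}\not\vDash^{\leq e'_1,a_1(e^*_1)}\psi[\vec b]$, hence $\mathfrak{M}\vDash^{\leq a_1,e_1}\neg\psi[\vec b]$, as required.

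I expect the main obstacle to be the negation case: translating the concrete witness $e'_0$ into a corresponding $e'_1$ that mirrors it is precisely the situation Lemma~\ref{thm:coherent_extension} is built for, and the bookkeeping to verify that the translation preserves decisiveness and all the coherence conditions demanded by the inductive hypothesis is where the machinery of $\bigcup$, $\min$, and $\upharpoonright$ developed in the preceding subsection has to interact correctly. Once those compatibilities are in place, everything else is formal.
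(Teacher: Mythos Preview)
Your proposal is correct and follows essentially the same route as the paper: induction on $\phi$, with the negation case handled by forming the union $\hat e=\bigcup\{e_0,e_1\}$, invoking Lemma~\ref{thm:coherent_extension} to extend $e'_0$ to an $\hat e'\leq\hat e$, restricting to obtain $e'_1=\hat e'\upharpoonright e_1$, and then using decisiveness of $(a_1,e_1)$ and the coherence machinery to set up the inductive hypothesis. You are more explicit than the paper about the conjunction case and about why $\{e'_0,e'_1\}$ and $\{e^*_0,e^*_1\}$ are coherent (via the $\sigma$-clause of the $\bigcup$ lemma), but the argument is the same.
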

\begin{proof}
  By induction on $\phi$.  

  Suppose $\phi$ is $\neg\psi$.  If $\mathfrak{M}\vDash^{\leq a_0,e_0}\phi[\vec b]$ then there is an $e'_0\leq e_0$ and an $e^*_0\subseteq e'_0$ so that $\mathfrak{M}\not\vDash^{\leq e'_0,a_0(e^*_0)}\psi[\vec b]$.

  Let $e^+=\bigcup\{e_0,e_1\}$.  Then by Lemma \ref{thm:coherent_extension} there is an $e'\leq e^+$ with $e'_0\subseteq e'$, and therefore an $e'_1=e'\upharpoonright e_1\leq e_1$.  Since $(a_1,e_1)$ is decisive for $\phi$, there must be some $e^*_1\subseteq e'_1$ so that $(e'_1,a_1(e^*_1))$ is decisive for $\neg\psi$.  Since $\{a_0,a_1\}$ is coherent and $\{e^*_0,e^*_1\}$ is coherent, also $\{a_0(e^*_0),a_1(e^*_1)\}$ is coherent, and since also $\{e'_0,e'_1\}$ is coherent, by the inductive hypothesis
\[\mathfrak{M}\vDash^{\leq e'_0,a_0(e^*_0)}\psi[\vec b]\Leftrightarrow\mathfrak{M}\vDash^{\leq e'_1,a_1(e^*_1)}\psi[\vec b].\]
Therefore $\mathfrak{M}\not\vDash^{\leq e'_1,a_1(e^*_1)}\psi[\vec b]$, so $\mathfrak{M}\vDash^{\leq a_1,e_1}\phi[\vec b]$ as needed.
\end{proof}



\begin{lemma}\label{thm:fol}
  If $(a,e)$ is decisive for $\phi$ then there is a first-order formula $\hat\phi^{\leq a,e}$ such that $\mathfrak{M}\vDash^{\leq a,e}\phi[\vec b]$ iff $\mathfrak{M}\vDash\hat\phi^{\leq a,e}[\vec b]$.
\end{lemma}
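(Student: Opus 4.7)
The proof proceeds by induction on the structure of $\phi$, exploiting the fact that decisiveness for $(a,e)$ forces every relevant unfolding of the infinitary conjunctions and disjunctions of $\phi$ to take place within finite data. At each stage we show that $\mathfrak{M}\vDash^{\leq a,e}\phi[\vec b]$ is equivalent to the truth of a first-order formula $\hat\phi^{\leq a,e}$ built by combining first-order formulas already supplied by the inductive hypothesis.

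The three easy cases handle themselves. When $\phi$ is atomic, set $\hat\phi^{\leq a,e}=\phi$. When $\phi$ is $\forall x\,\psi$, decisiveness of $(a,e)$ for $\phi$ coincides with decisiveness for $\psi$, and we take $\hat\phi^{\leq a,e}=\forall x\,\hat\psi^{\leq a,e}$. When $\phi$ is $\bigwedge_{i\in I}\psi_i$, the definition of decisiveness gives, for each $i\in\dom(a)$, a unique $r_i\in e\cap\mathbb{F}^\exists_{\psi_i}$ with $(a(i),r_i)$ decisive for $\psi_i$; since $\dom(a)$ is finite, we may take
\[
\hat\phi^{\leq a,e}\;=\;\bigwedge_{i\in\dom(a)}\hat\psi_i^{\leq a(i),r_i},
\]
a finite conjunction of first-order formulas, hence first-order.

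The principal case is $\phi=\neg\psi$ with $Q=\forall$. By the satisfaction clause, $\mathfrak{M}\vDash^{\leq a,e}\phi[\vec b]$ holds iff there exist $e'\leq e$ and $e^*\subseteq e'$ with $e^*\in\dom(a)$ such that $(e',a(e^*))$ is decisive for $\psi$ and $\mathfrak{M}\not\vDash^{\leq e',a(e^*)}\psi[\vec b]$. I would take
\[
\hat\phi^{\leq a,e}\;=\;\bigvee_{(e',e^*)\in\mathcal{P}}\neg\,\hat\psi^{\leq e',a(e^*)},
\]
where $\mathcal{P}$ is the set of all pairs $(e',e^*)$ with $e'\leq e$, $e^*\in\dom(a)$, $e^*\subseteq e'$, and $(e',a(e^*))$ decisive for $\psi$. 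This disjunction is finite because the earlier lemma guarantees that $\{e':e'\leq e\}$ is finite, while $\dom(a)$ is finite by the definition of $\mathbb{P}^\forall_{\neg\psi}$, so $\mathcal{P}$ is finite; each disjunct is first-order by the inductive hypothesis applied to the decisive pair $(e',a(e^*))$.

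The only real subtlety is confirming that the inductive hypothesis may be invoked legitimately. For the $\neg\psi$ case this requires that the pairs $(e',a(e^*))$ in the disjunction are themselves decisive for $\psi$, which is built into the definition of $\mathcal{P}$; the decisiveness hypothesis on $(a,e)$ ensures that for every $e'\leq e$ at least one eligible $e^*$ exists, so $\mathcal{P}$ is nonempty whenever relevant. For the $\bigwedge_{i\in I}\psi_i$ case the analogous check is built into the decisiveness clause. No step depends on $\vec b$, so the resulting formula $\hat\phi^{\leq a,e}$ is first-order with the same free variables as $\phi$, as required.
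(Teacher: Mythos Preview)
Your proof is correct and follows essentially the same inductive scheme as the paper. The only difference is in the $\neg\psi$ case: the paper selects, for each $e'\leq e$, a single witness $\tilde e'\subseteq e'$ with $(e',a(\tilde e'))$ decisive, and forms the disjunction $\bigvee_{e'\leq e}\neg\hat\psi^{\leq e',a(\tilde e')}$, whereas you disjoin over all pairs $(e',e^*)\in\mathcal{P}$. Your choice makes the equivalence with the satisfaction clause immediate in both directions; the paper's smaller disjunction requires, in the forward direction, an appeal to the coherence lemma (two decisive pairs $(e',a(e^*))$ and $(e',a(\tilde e'))$ with $e^*,\tilde e'\subseteq e'$ give the same bounded truth value) to pass from an arbitrary witnessing $e^*$ to the chosen $\tilde e'$. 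Both arguments rest on the same finiteness lemma for $\{e'\leq e\}$, so this is a cosmetic rather than substantive divergence.
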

\begin{proof}
  By induction on $\phi$.

  When $\phi$ is $\neg\psi$, we use the fact that there are only finitely many $e'\leq e$.  For each $e'\leq e$, let $e''\subseteq e'$ be such that $(e',a( e''))$ is decisive for $\psi$, so we may take $\hat\phi^{\leq a,e}$ to be
\[\bigvee_{e'\leq e}\neg\hat\psi^{\leq e',a( e'')}.\]
Suppose $\mathfrak{M}\vDash^{\leq a,e}\phi[\vec b]$.  Then there is some $e'\leq e$ and some $e^*\subseteq e$ so that $\mathfrak{M}\not\vDash^{\leq e',a(e^*)}\psi[\vec b]$, so by the inductive hypothesis, $\mathfrak{M}\vDash\neg\hat\psi^{\leq e',a(e^*)}[\vec b]$.  Since $e^*\subseteq e'$ and $e''\subseteq e'$, $\{e^*, e''\}$ is coherent, so $\{a(e^*),a( e'')\}$ is coherent, so also $\mathfrak{M}\vDash\neg\psi^{\leq e',a(e'')}[\vec b]$, so $\mathfrak{M}\vDash\hat\phi^{\leq a,e}[\vec b]$.

Conversely, if $\mathfrak{M}\vDash\hat\phi^{\leq a,e}[\vec b]$ then there is an $e'\leq e$ so that $\mathfrak{M}\vDash\neg\hat\psi^{\leq e',a(e'')}[\vec b]$, so by the inductive hypothesis, $\mathfrak{M}\not\vDash^{\leq e',a(e'')}\psi[\vec b]$, so $\mathfrak{M}\vDash^{\leq a,e}\phi[\vec b]$.
\end{proof}

\subsection{Strategies}

We want to extend these finite fragments to total ``strategies'' which are always decisive.  These strategies will be topological spaces $\mathbb{S}^Q_\phi$ with a family of basic open sets given by $\{U_f\mid f\in\mathbb{F}^Q_\phi\}$.  We will write $f\subseteq F$ if $F\in U_f$.
\begin{definition}\label{def:strategies}
  \begin{itemize}
  \item When $\phi$ is atomic, $\mathbb{S}^Q_\phi$ is a singleton set and the only basic open set is $U_\ast=\{\ast\}$ where $\ast$ is the unique element of both $\mathbb{S}^Q_\phi$ and $\mathbb{F}^Q_\phi$.
  \item When $\phi$ is $\neg\psi$:
    \begin{itemize}
    \item $\mathbb{S}^\exists_\phi=\mathbb{S}^\forall_\psi$ with the same topology.
    \item $\mathbb{S}^\forall_\phi$ is the set of functions $F$ from $\mathbb{S}^\exists_\phi=\mathbb{S}^\forall_\psi$ to $\mathbb{S}^\exists_\psi$ satisfying the following uniform continuity condition: for all $b$ and all $A$ with $F(A)\in U_b$, there is an $a$ with $A\in U_a$ such that, for each $a'\leq a$, there is a $b'\leq b$ with $F(U_{a'})\subseteq U_{b'}$.  For any $f\in\mathbb{F}^\forall_\phi$, $U_f$ is the set of $F$ such that, for every $a\in\dom(f)$ and any $A\in U_a$, $F(A)\in U_{f(a)}$.
    \end{itemize}
  \item When $\phi$ is $\forall x\psi$, $\mathbb{S}^Q_\phi$ is $\mathbb{S}^Q_\psi$.
  \item When $\phi$ is $\bigwedge_{i\in I}\psi_i$
    \begin{itemize}
    \item $\mathbb{S}^\exists_\phi$ consists of sets $F\subseteq\bigcup\mathbb{S}^\exists_{\psi_i}$ such that, for each $i$, $|F\cap\mathbb{S}^\exists_{\psi_i}|= 1$.  For any $f\in\mathbb{F}^\exists_\phi$, $U_f$ is the set of $F$ such that for each $e\in f$ there is an $E\in F\cap U_e$.
    \item $\mathbb{S}^\forall_\phi$ consists of functions $F$ whose domain is a finite subset of $I$ and $F(i)\in\mathbb{S}^\forall_{\psi_i}$ for all $i\in\dom(F)$.  For any $f\in\mathbb{F}^\forall_\phi$, $U_f$ is the set of $F$ such that $\dom(F)=\dom(f)$ and for every $i\in\dom(f)$, $F(i)\in U_{f(i)}$.
    \end{itemize}
  \end{itemize}
\end{definition}

\begin{definition}
  We define partial orderings on $\mathbb{S}^Q_\phi$ recursively by:
  \begin{itemize}
  \item When $\phi$ is atomic, the ordering is trivial.
  \item When $\phi$ is $\neg\psi$:
    \begin{itemize}
    \item When $Q$ is $\exists$, the definition is the same as for $\mathbb{P}^\forall_\psi$.
    \item When $Q$ is $\forall$, we say $G\leq F$ if for every $A\in\mathbb{S}^Q_\psi$, $G(A)\leq F(A)$.
    \end{itemize}
  \item When $\phi$ is $\forall x\psi$ the definition is the same as for $\mathbb{P}^Q_\psi$.
  \item When $\phi$ is $\bigwedge_{i\in I}\psi_i$''
    \begin{itemize}
    \item When $Q$ is $\exists$, $G\leq F$ if for each $E\in G$ there is an $E'\in F$ with $E\leq E'$.
    \item When $Q$ is $\forall$, $G\leq F$ if $\dom(G)\subseteq \dom(F)$ and for all $i\in \dom(G)$, $G(i)\leq F(i)$.
    \end{itemize}
  \end{itemize}
\end{definition}

\begin{definition}
  When $\mathcal{F}\subseteq\mathcal{S}^Q_\phi$ is non-empty and finite, we define a $\max\mathcal{F}\in\mathcal{S}^Q_\phi$ by:
  \begin{itemize}
  \item When $\phi$ is atomic, $\max\mathcal{F}=\ast$.
  \item When $\phi$ is $\neg\psi$:
    \begin{itemize}
    \item When $Q$ is $\exists$, the definition is the same as for $\mathbb{P}^\forall_\psi$.
    \item When $Q$ is $\forall$, $(\max\mathcal{F})(A)=\max\{F(A)\}_{F\in\mathcal{F}}$.
    \end{itemize}
  \item When $\phi$ is $\forall x\psi$ the definition is the same as for $\mathbb{P}^Q_\psi$.
  \item When $\phi$ is $\bigwedge_{i\in I}\psi_i$:
    \begin{itemize}
    \item When $Q$ is $\exists$, $\max\mathcal{F}$ is $\{\max(\bigcup\mathcal{F}\cap\mathbb{P}^\exists_{\psi_i})\}$.
    \item When $Q$ is $\forall$, $\dom(\max\mathcal{F})=\bigcup_{F\in\mathcal{F}}\dom(F)$ and $(\max\mathcal{F})(i)=\max\{F(i)\}_{F\in\mathcal{F}}$.
    \end{itemize}
  \end{itemize}
\end{definition}

\begin{lemma}
  For any non-empty finite $\mathcal{F}\subseteq\mathcal{S}^\forall_\phi$, $F\leq\max\mathcal{F}$ for all $F\in\mathcal{F}$.
\end{lemma}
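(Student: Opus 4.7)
The plan is a direct induction on the formula complexity of $\phi$, proved simultaneously for $Q=\forall$ and $Q=\exists$, since the recursive definition of $\max$ on $\mathbb{S}^\exists_{\neg\psi}$ reduces to the $\forall$-side for $\psi$. The atomic case is trivial (the ordering is trivial), and the $\forall x\psi$ case is immediate from the definitions, which identify the ordering and $\max$ with those for $\psi$.

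For $\phi = \neg\psi$ with $Q = \forall$, let $F\in\mathcal{F}$ and take any $A\in\mathbb{S}^\exists_\psi$. The set $\{F'(A)\}_{F'\in\mathcal{F}}$ is a non-empty finite subset of $\mathbb{S}^\exists_\psi$, so the inductive hypothesis gives $F(A)\leq\max\{F'(A)\}_{F'\in\mathcal{F}} = (\max\mathcal{F})(A)$. By the definition of the ordering on $\mathbb{S}^\forall_{\neg\psi}$, this means $F\leq\max\mathcal{F}$.

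For $\phi = \bigwedge_{i\in I}\psi_i$ with $Q = \exists$, any $E\in F$ lies in $\mathbb{S}^\exists_{\psi_i}$ for a unique $i$, and $E\in\bigcup\mathcal{F}\cap\mathbb{S}^\exists_{\psi_i}$ (this intersection is non-empty precisely because it contains $E$). By induction, $E\leq\max(\bigcup\mathcal{F}\cap\mathbb{S}^\exists_{\psi_i})$, and this latter element belongs to $\max\mathcal{F}$ by construction, so the ordering clause is satisfied. For $\phi = \bigwedge_{i\in I}\psi_i$ with $Q = \forall$, by definition $\dom(\max\mathcal{F}) = \bigcup_{F'\in\mathcal{F}}\dom(F')\supseteq\dom(F)$, and for each $i\in\dom(F)$ the inductive hypothesis applied to $\{F'(i)\}_{F'\in\mathcal{F}, i\in\dom(F')}$ yields $F(i)\leq(\max\mathcal{F})(i)$.

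I expect no genuine obstacle. The only mild subtlety is the need to run the induction on both quantifiers at once, because the $\neg\psi$ clause swaps $Q$; once that is set up, every case is a one-line appeal to the inductive hypothesis together with the defining clause for the ordering.
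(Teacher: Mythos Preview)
Your proposal is correct; in fact the paper states this lemma without proof, treating it as immediate from the recursive definitions, and your induction on $\phi$ (carried out for both $Q$'s simultaneously so that the $\neg\psi$ step goes through) is exactly the routine verification one would supply.
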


\begin{lemma}
  $\mathcal{F}\subseteq\mathbb{P}^Q_\phi$ is coherent if and only if every finite subset of $\mathcal{F}$ is coherent.
\end{lemma}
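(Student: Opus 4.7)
The plan is to prove the two directions separately. The forward direction is essentially immediate: if $\mathcal{F}$ is coherent then every finite subset of $\mathcal{F}$ is coherent by the earlier lemma stating that subsets of coherent sets are coherent. So the content of the statement lies in the backward direction.

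For the backward direction, I will argue by induction on $\phi$. The atomic case is trivial (every set is coherent), and the case $\phi=\forall x\psi$ reduces immediately to $\psi$. The case $\phi=\bigwedge_{i\in I}\psi_i$ with $Q=\forall$ follows quickly from the inductive hypothesis: coherence of $\mathcal{F}$ is characterized by checking that $\dom(f)=\dom(f')$ for all $f,f'\in\mathcal{F}$ (which is a pairwise check, hence finitary) and that $\{f(i)\}_{f\in\mathcal{F}}$ is coherent for each $i$ in the common domain; any finite subset of $\{f(i)\}_{f\in\mathcal{F}}$ arises from a finite $\mathcal{F}_0\subseteq\mathcal{F}$, which is coherent by hypothesis, whence the inductive hypothesis on $\psi_i$ gives coherence of $\{f(i)\}_{f\in\mathcal{F}}$.

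For $\phi=\bigwedge_{i\in I}\psi_i$ with $Q=\exists$, the same pattern works: I need $\bigcup\mathcal{F}\cap\mathbb{P}^\exists_{\psi_i}$ coherent for each $i$, and by the inductive hypothesis applied to $\psi_i$ it suffices to verify that any finite subset is coherent. Any such finite subset lies inside $\bigcup\mathcal{F}_0\cap\mathbb{P}^\exists_{\psi_i}$ for some finite $\mathcal{F}_0\subseteq\mathcal{F}$, and $\mathcal{F}_0$ is coherent by hypothesis, so the desired finite subset is coherent (after shrinking via the subset lemma).

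The main case, and the one I expect to be the main obstacle, is $\phi=\neg\psi$ with $Q=\forall$, because this is the one place where an infinite family can produce an infinite derived family that must itself be checked for coherence. Suppose every finite subset of $\mathcal{F}\subseteq\mathbb{P}^\forall_{\neg\psi}$ is coherent. For the first coherence clause, given $a\in\bigcup_{f\in\mathcal{F}}\dom(f)$ pick some $f\in\mathcal{F}$ with $a\in\dom(f)$; then $\{f\}$ is a finite subset of $\mathcal{F}$, hence coherent, and the first clause for $\{f\}$ yields that $\{a\}$ is coherent. For the second clause, let $\mathcal{A}\subseteq\bigcup_{f\in\mathcal{F}}\dom(f)$ be finite and coherent, and set $\mathcal{B}=\{f(a)\}_{a\in\mathcal{A},\, f\in\mathcal{F},\, a\in\dom(f)}$. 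This set $\mathcal{B}$ need not be finite, so I invoke the inductive hypothesis on $\psi$: it suffices to show every finite subset $\mathcal{B}_0\subseteq\mathcal{B}$ is coherent. Any such $\mathcal{B}_0$ is witnessed by finitely many pairs $(a,f)$, hence lies in the set of values of a finite subfamily $\mathcal{F}_0\subseteq\mathcal{F}$. By hypothesis $\mathcal{F}_0$ is coherent, so its second clause (applied to the finite coherent $\mathcal{A}$) gives that $\{f(a)\}_{a\in\mathcal{A},\, f\in\mathcal{F}_0,\, a\in\dom(f)}$ is coherent, and then $\mathcal{B}_0$ is coherent as a subset by the earlier lemma. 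The potential subtlety is keeping straight that the inductive hypothesis is applied to $\psi$, which is at the strictly smaller level of formulas, while the hypothesis on $\mathcal{F}$ is used at the original level; once this bookkeeping is clear, the argument closes.
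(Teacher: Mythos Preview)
Your argument is correct and is exactly the natural induction one would expect; the paper in fact states this lemma without proof, so there is nothing to compare against beyond noting that your approach is the intended one.

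One small point of bookkeeping in the $\neg\psi$, $Q=\forall$ case: when you apply the second coherence clause for the finite $\mathcal{F}_0$ to the set $\mathcal{A}$, you implicitly need $\mathcal{A}\subseteq\bigcup_{f\in\mathcal{F}_0}\dom(f)$. As written, $\mathcal{F}_0$ only contains the $f$'s witnessing elements of $\mathcal{B}_0$, so an $a\in\mathcal{A}$ not appearing in any such pair might lie outside $\bigcup_{f\in\mathcal{F}_0}\dom(f)$. This is harmless: either enlarge $\mathcal{F}_0$ by adding, for each $a\in\mathcal{A}$, one $f\in\mathcal{F}$ with $a\in\dom(f)$ (still finite since $\mathcal{A}$ is), or simply apply the clause to the finite coherent subset $\mathcal{A}_0\subseteq\mathcal{A}$ of those $a$'s that actually occur in your witnessing pairs. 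Either tweak closes the argument cleanly.
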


\begin{lemma}
  The sets $U_f$ are non-empty.
\end{lemma}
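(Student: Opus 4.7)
The proof proceeds by induction on $\phi$, and several cases are immediate. When $\phi$ is atomic, $U_\ast = \{\ast\}$ is non-empty. When $\phi$ is $\forall x\psi$, both $\mathbb{S}^Q_\phi$ and $\mathbb{F}^Q_\phi$ coincide with their counterparts for $\psi$, so the statement follows from the inductive hypothesis. When $\phi$ is $\neg\psi$ and $Q$ is $\exists$, $\mathbb{S}^\exists_\phi = \mathbb{S}^\forall_\psi$ and $\mathbb{F}^\exists_\phi = \mathbb{F}^\forall_\psi$, so again we invoke the inductive hypothesis. For $\phi = \bigwedge_{i\in I}\psi_i$ with $Q = \forall$, I take $f$ with finite domain and, for each $i \in \dom(f)$, use the inductive hypothesis to pick some $F_i \in U_{f(i)}$, setting $F(i) = F_i$. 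For $Q = \exists$, I extend each element of $f$ via the inductive hypothesis, and fill the remaining coordinates by applying the inductive hypothesis to the empty fragment in each $\mathbb{F}^\exists_{\psi_i}$.

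The main case is $\phi = \neg\psi$ with $Q = \forall$. Given $f \in \mathbb{F}^\forall_{\neg\psi}$, I must build a continuous, $\leq$-monotone function $F : \mathbb{S}^\forall_\psi \to \mathbb{S}^\exists_\psi$ with $f \subseteq F$. My plan rests on three observations: (a) since $\dom(f)$ is finite, there are only finitely many coherent subsets $\mathcal{A} \subseteq \dom(f)$; (b) for each such $\mathcal{A}$, the coherence of $\{f\}$ ensures $\{f(a)\}_{a \in \mathcal{A}}$ is coherent, so $g_\mathcal{A} := \bigcup_{a \in \mathcal{A}} f(a)$ is a well-defined element of $\mathbb{F}^\exists_\psi$; and (c) every $A \in \mathbb{S}^\forall_\psi$ determines a coherent subset $\mathcal{A}(A) := \{a \in \dom(f) : a \subseteq A\}$, the coherence coming from $A$ being a common extension. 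I then construct, by induction over the finite poset of coherent subsets of $\dom(f)$ ordered by $\subseteq$ (smallest first), a family $\{F_\mathcal{A}\}$ of total strategies with $F_\mathcal{A} \in U_{g_\mathcal{A}}$ satisfying the compatibility property that $\mathcal{A} \subseteq \mathcal{A}'$ implies every finite fragment of $F_\mathcal{A}$ is also a finite fragment of $F_{\mathcal{A}'}$. Defining $F(A) := F_{\mathcal{A}(A)}$ then gives the desired extension.

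The hardest step is the inductive construction of the family $\{F_\mathcal{A}\}$. At each stage, given $F_{\mathcal{A}^-}$ for all $\mathcal{A}^- \subsetneq \mathcal{A}$, I must produce a total strategy $F_\mathcal{A}$ extending $g_\mathcal{A}$ and compatible with each $F_{\mathcal{A}^-}$. I plan to accomplish this by using Lemma~\ref{thm:coherent_extension} to build increasingly large finite fragments inside $U_{g_\mathcal{A}}$ that absorb arbitrarily large finite fragments of each $F_{\mathcal{A}^-}$, and then invoking the inductive hypothesis to pass to a total strategy $F_\mathcal{A}$ containing all of them. Once the family is built, the remaining verifications are comparatively routine: $f \subseteq F$ holds because for $A \in U_a$ with $a \in \dom(f)$ we have $a \in \mathcal{A}(A)$ and hence $f(a) \subseteq g_{\mathcal{A}(A)} \subseteq F(A)$; continuity follows because the compatibility property makes $F^{-1}(U_b) = \bigcup\{U_{\bigcup\mathcal{A}} : b \subseteq F_\mathcal{A}\}$, which is open; and the $\leq$-monotonicity condition is obtained by enforcing a parallel compatibility under $\leq$ during the recursion, again via Lemma~\ref{thm:coherent_extension}.
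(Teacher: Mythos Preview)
Your overall shape---indexing by coherent subsets $\mathcal{A}\subseteq\dom(f)$ and sending $A$ to a strategy attached to the maximal such $\mathcal{A}$ below $A$---is essentially the paper's approach in disguise (the paper first replaces $f$ by $\tilde f$, whose domain is exactly the set of unions $\bigcup\mathcal{A}$ of coherent $\mathcal{A}\subseteq\dom(f)$, so your $\mathcal{A}(A)$ is just $a(A)$ in their notation). But the compatibility condition you impose on the family $\{F_\mathcal{A}\}$ is too strong to be satisfiable in general, and this is a genuine gap.

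Concretely: you require that $\mathcal{A}\subseteq\mathcal{A}'$ implies $\mathcal{C}(F_\mathcal{A})\subseteq\mathcal{C}(F_{\mathcal{A}'})$. But each $\mathcal{C}(F)$ is a \emph{maximal} coherent subset of $\mathbb{F}^\exists_\psi$ (this is proved two lemmas later in the paper, but it is a fact about strategies, independent of the present lemma). Maximality forces $\mathcal{C}(F_\mathcal{A})=\mathcal{C}(F_{\mathcal{A}'})$, hence $F_\mathcal{A}=F_{\mathcal{A}'}$, so all your $F_\mathcal{A}$ coincide with a single strategy $F_0$. You would then need $f(a)\subseteq F_0$ for every $a\in\dom(f)$, i.e.\ $\{f(a):a\in\dom(f)\}$ coherent---but coherence of $\{f\}$ only guarantees this for \emph{coherent} subsets of $\dom(f)$, and $\dom(f)$ itself need not be coherent. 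So the recursive construction you describe cannot get past the singletons. (There is a secondary issue: Lemma~\ref{thm:coherent_extension} controls the $\leq$ relation, not $\subseteq$, so it is not the right tool for the absorption you sketch; and the inductive hypothesis only gives nonemptiness of each $U_{h_n}$, not of an infinite intersection.)

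The paper avoids all of this by imposing no $\subseteq$-compatibility whatsoever among the chosen $B_a\in U_{\tilde f(a)}$. It orders $\dom(\tilde f)$ compatibly with both $\subseteq$ and $\leq$, and then walks along this order choosing each $B_a$ so that the required $\leq$-monotonicity holds (``if $a^*\subseteq a'$ and $a^*\leq a$ then $B_{a'}\leq B_a$''); this is possible simply because $\mathbb{S}^\exists_\psi$ has finite maxima, which is the immediately preceding lemma. Continuity is then asserted to follow from the fact that $F$ factors through the finite-valued map $A\mapsto a(A)$. The moral is that the $\leq$-condition is the only one you need to enforce by hand, and the right tool is $\max$ in $\mathbb{S}^\exists_\psi$, not Lemma~\ref{thm:coherent_extension}.
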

\begin{proof}
  By induction on $\phi$.

Suppose $\phi$ is $\neg\psi$ and let $f\in\mathbb{F}^\forall_\phi$.  Since $f\subseteq \tilde f$, we have $U_{\tilde f}\subseteq U_f$, so we may assume without loss of generality that $f=\tilde f$.  Order $\dom(f)$ so that if $a'\subseteq a$ then $a\prec a'$ and if $a\leq a'$ then $a\prec a'$; by induction along $\prec$, for each $a\in\dom(f)$ choose a $B_a\in U_{f(a)}$ such that if $a'\in\dom(f)$, $a^*\subseteq a'$, and $a^*\leq a$ then $B_{a'}\leq B_a$.  (Since maxima exist over finite sets by the previous lemma, such elements always exist.)  We define $F\in\mathbb{S}^\forall_\phi$ by taking, for each $A\in\mathbb{S}^\forall_{\psi}$, the longest (i.e. $\subseteq$-maximal) $a\in\dom(f)$ with $A\in U_f$, and setting $F(A)=B_a$.  Such a function is automatically continuous and satisfies the monotonicity requirement.
\end{proof}

\begin{definition}
  For any $F\in\mathbb{S}^Q_\phi$, $\mathcal{C}(F)$ is the set of $f\in\mathbb{F}^Q_\phi$ such that $f\subseteq F$.
\end{definition}

\begin{lemma}
  If $F\in\mathbb{S}^Q_\phi$ and $\mathcal{A}\subseteq\mathcal{C}(F)$ is finite then $\mathcal{A}$ is coherent and $\bigcup\mathcal{A}\subseteq F$.
\end{lemma}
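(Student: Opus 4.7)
The proof is by induction on the complexity of $\phi$, following the standard pattern: the atomic case is trivial, the $\forall x\psi$ case reduces verbatim to $\psi$, and the case of $\neg\psi$ with $Q=\exists$ reduces immediately to the inductive hypothesis for $\psi$ with $Q=\forall$. I will focus on the two informative cases.

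For $\phi=\bigwedge_{i\in I}\psi_i$ with $Q=\forall$, any $f\in\mathcal{C}(F)$ forces $\dom(f)=\dom(F)$ and $F(i)\in U_{f(i)}$ for each $i\in\dom(F)$. Thus for each $i\in\dom(F)$, $\{f(i)\}_{f\in\mathcal{A}}\subseteq\mathcal{C}(F(i))$, and the inductive hypothesis applied to $\psi_i$ gives both that this family is coherent and that $\bigcup\{f(i)\}_{f\in\mathcal{A}}\subseteq F(i)$. Reassembling via the definitions yields coherence of $\mathcal{A}$ and $\bigcup\mathcal{A}\subseteq F$. The $Q=\exists$ subcase is analogous: any $e\in\bigcup\mathcal{A}\cap\mathbb{F}^\exists_{\psi_i}$ extends to the unique element of $F\cap\mathbb{S}^\exists_{\psi_i}$, so $\bigcup\mathcal{A}\cap\mathbb{F}^\exists_{\psi_i}\subseteq\mathcal{C}(E_i)$, and the inductive hypothesis applied at each $\psi_i$ delivers coherence and the required containment.

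The main case is $\phi=\neg\psi$ with $Q=\forall$, where $F\colon\mathbb{S}^\forall_\psi\to\mathbb{S}^\exists_\psi$ is continuous and $f\subseteq F$ means $F(A)\in U_{f(a)}$ whenever $a\in\dom(f)$ and $A\in U_a$. To verify coherence of $\mathcal{A}$, the nontrivial clause is that for every finite coherent $\mathcal{B}\subseteq\bigcup_{f\in\mathcal{A}}\dom(f)$ the set $\{f(b)\}_{b\in\mathcal{B},\,f\in\mathcal{A},\,b\in\dom(f)}$ is coherent. The plan is to pass from fragments to strategies: let $B=\bigcup\mathcal{B}\in\mathbb{F}^\forall_\psi$ (using the lemma that the union of a coherent family of fragments is a fragment), and pick any $A\in U_B$ (using non-emptiness of $U_B$). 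For each $b\in\mathcal{B}$ we have $b\subseteq B$, hence $A\in U_b$, hence $F(A)\in U_{f(b)}$ for every $f\in\mathcal{A}$ with $b\in\dom(f)$. Thus $\{f(b)\}_{b\in\mathcal{B},\,f\in\mathcal{A},\,b\in\dom(f)}\subseteq\mathcal{C}(F(A))$, and the inductive hypothesis for $\psi$ at $Q=\exists$ provides both coherence of this set and the inclusion of its union in $F(A)$.

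Granted coherence, the containment $\bigcup\mathcal{A}\subseteq F$ is proved by the same move: for any $a\in\dom(\bigcup\mathcal{A})$ and any $A\in U_a$, the set $\{f(b)\}_{b\subseteq a,\,f\in\mathcal{A},\,b\in\dom(f)}$ lies in $\mathcal{C}(F(A))$ (each such $b$ satisfies $b\subseteq a\subseteq A$, so $A\in U_b$), and the inductive hypothesis gives $(\bigcup\mathcal{A})(a)=\bigcup\{f(b)\}_{b\subseteq a,\,f\in\mathcal{A},\,b\in\dom(f)}\subseteq F(A)$, which is the definition of $F\in U_{\bigcup\mathcal{A}}$. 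The main obstacle is precisely this bridge from fragments to strategies in the $\neg$-case: one needs to locate a strategy $A$ extending a coherent collection of fragments on which to evaluate $F$ and apply the inductive hypothesis, and it is the previously established non-emptiness of $U_f$ together with coherent unions staying in $\mathbb{F}^Q_\phi$ that makes this possible.
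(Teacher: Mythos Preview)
Your proof is correct and follows exactly the approach the paper intends: the paper's proof reads simply ``Straightforward induction,'' and you have supplied the details of that induction, including the key move in the $\neg\psi$, $Q=\forall$ case of passing to a strategy $A\in U_{\bigcup\mathcal{B}}$ via the non-emptiness of the $U_f$'s in order to invoke the inductive hypothesis at $F(A)$.
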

\begin{proof}
  Straightforward induction.
\end{proof}

\begin{lemma}
  $\mathcal{C}(F)$ is a maximal coherent set.
\end{lemma}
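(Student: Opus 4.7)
The lemma has two parts, coherence and maximality. For coherence: by the previous lemma every finite subset of $\mathcal{C}(F)$ is coherent, and by the earlier lemma that coherence is equivalent to coherence of every finite subset, $\mathcal{C}(F)$ itself is coherent.

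For maximality I argue by induction on $\phi$: assuming $g\in\mathbb{F}^Q_\phi$ with $\mathcal{C}(F)\cup\{g\}$ coherent, I show $g\subseteq F$. The atomic and $\forall x\psi$ cases are routine; the $\bigwedge_{i\in I}\psi_i$ cases reduce to the inductive hypothesis applied to each $\psi_i$ (for $Q=\forall$, coherence forces $\dom(g)=\dom(F)$ and then $\mathcal{C}(F(i))\cup\{g(i)\}$ is coherent for each $i$; the $Q=\exists$ variant is stratified by index), and the case $\phi=\neg\psi$ with $Q=\exists$ coincides with the $Q=\forall$ case for $\psi$.

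The substantive case is $\phi=\neg\psi$ with $Q=\forall$. Here $F$ is a continuous function from $\mathbb{S}^\forall_\psi$ to $\mathbb{S}^\exists_\psi$ and $g$ is a finite coherent partial function. Fix $a\in\dom(g)$ and $A\in U_a$; the goal is $g(a)\subseteq F(A)$. Applying the inductive hypothesis to $F(A)$, $\mathcal{C}(F(A))$ is a maximal coherent subset of $\mathbb{F}^\exists_\psi$, so it suffices to show that $\mathcal{E}\cup\{g(a)\}$ is coherent for every finite $\mathcal{E}\subseteq\mathcal{C}(F(A))$. Using continuity of $F$ together with $F(A)\in U_{\bigcup\mathcal{E}}$ (since $\bigcup\mathcal{E}\subseteq F(A)$ by the previous lemma), I choose a basic open $U_{a^\sharp}\ni A$ with $F(U_{a^\sharp})\subseteq U_{\bigcup\mathcal{E}}$, and by replacing $a^\sharp$ with $a\cup a^\sharp$ (well-defined since $\{a,a^\sharp\}\subseteq\mathcal{C}(A)$ is coherent by the previous lemma) I may assume $a\subseteq a^\sharp$.

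Next I build a witness $f^\star\in\mathcal{C}(F)$ with $a^\sharp\in\dom(f^\star)$ and $f^\star(a^\sharp)=\bigcup\mathcal{E}$. The $\leq$-closure of $\dom(f^\star)$ required by the definition of $\mathbb{P}^\forall_{\neg\psi}$ forces a definition of $f^\star(a')$ for every $a'\leq a^\sharp$, and I invoke the monotonicity clause built into $\mathbb{S}^\forall_{\neg\psi}$ (yielding, for each such $a'$, some $b'\leq\bigcup\mathcal{E}$ with $F(U_{a'})\subseteq U_{b'}$) to pick values so that both monotonicity axioms of $\mathbb{P}^\forall_{\neg\psi}$ hold and $\{f^\star\}$ is coherent; this is the main obstacle, because the $\leq$-closure of $\{a^\sharp\}$ in $\mathbb{P}^\forall_\psi$ may contain non-coherent elements, requiring care in the choice of $f^\star(a')$ to preserve coherence. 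Once $f^\star$ is in hand, coherence of $\mathcal{C}(F)\cup\{g\}$ applied to the coherent pair $\{a,a^\sharp\}$ yields $\{g(a),f^\star(a^\sharp)\}=\{g(a),\bigcup\mathcal{E}\}$ coherent (as a subset of the coherent set produced by the definition of coherence at $\neg\psi$), and property~(4) of the lemma on $\bigcup$---applied with $\mathcal{F}=\{g(a),\bigcup\mathcal{E}\}$, $\mathcal{G}=\mathcal{E}\cup\{g(a)\}$, and $\sigma$ mapping each $e\in\mathcal{E}$ to $\bigcup\mathcal{E}$ and $g(a)$ to itself---delivers the required coherence of $\mathcal{E}\cup\{g(a)\}$.
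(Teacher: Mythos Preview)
Your approach is the direct-proof mirror of the paper's contrapositive argument: where you assume $\mathcal{C}(F)\cup\{g\}$ is coherent and deduce $g\subseteq F$, the paper assumes $g\not\subseteq F$, picks $a\in\dom(g)$ and $A\in U_a$ with $g(a)\not\subseteq F(A)$, uses the inductive hypothesis to find a finite $\mathcal{B}\subseteq\mathcal{C}(F(A))$ with $\mathcal{B}\cup\{g(a)\}$ incoherent, pulls each $b\in\mathcal{B}$ back along continuity to some $a_b\subseteq A$ with $F(U_{a_b})\subseteq U_b$, and then simply asserts the existence of an $f\in\mathcal{C}(F)$ with $b\subseteq f(a_b)$, concluding that $\{f,g\}$ is incoherent. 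The two arguments require exactly the same construction---an element of $\mathcal{C}(F)$ with prescribed domain data---and you are right to flag that construction as the substantive step.

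Your diagnosis of the obstacle, however, is slightly off. You worry that the $\leq$-closure of $\{a^\sharp\}$ in $\mathbb{P}^\forall_\psi$ may contain non-coherent elements, and propose to handle this by ``care in the choice of $f^\star(a')$''. But if some $a'\leq a^\sharp$ were genuinely not in $\mathbb{F}^\forall_\psi$, then condition~(i) of coherence for $\{f^\star\}$ would fail no matter what value you assign at $a'$; the value plays no role there. The right resolution is to observe that the concern is unfounded: a straightforward induction on $\psi$ (parallel to the paper's lemma that $g\subseteq\sigma(g)$ transfers coherence) shows that if $\mathcal{F}$ is coherent and $\sigma:\mathcal{G}\to\mathcal{F}$ satisfies $g\leq\sigma(g)$, then $\mathcal{G}$ is coherent; in particular $a'\leq a^\sharp\in\mathbb{F}^\forall_\psi$ forces $a'\in\mathbb{F}^\forall_\psi$. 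With that in hand, the monotonicity clause of $\mathbb{S}^\forall_{\neg\psi}$ supplies the values $f^\star(a')$ exactly as you outline, and both your construction and the paper's go through. The paper simply suppresses this verification; once you replace your ``care'' remark with the $\leq$-transfer of coherence, your proof is complete and matches the paper's.
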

\begin{proof}
  By induction on $\phi$.

  Suppose $\phi$ is $\neg\psi$ and $Q$ is $\forall$.  For any $a\in\bigcup_{f\in \mathcal{C}(F)}\dom(f)$, $\{a\}$ is automatically coherent because $f\in\mathbb{F}^\forall_\phi$.  If $\mathcal{A}\subseteq\bigcup_{f\in\mathcal{C}(F)}\dom(f)$ is finite and coherent then there is an $a=\bigcup\mathcal{A}$ and an $A\in U_a$ by the previous lemma; since $F(A)$ is defined, for each $f\in A$ and $a'\in\mathcal{A}$, $f(a')\subseteq F(A)$, so by the inductive hypothesis, $\{f(a)\}_{a\in\mathcal{A}, f\in\mathcal{C}(F), a\in\dom(f)}$ is coherent.

  Next, suppose $g\in\mathbb{F}^\forall_\phi\setminus \mathcal{C}(F)$.  Since $g\not\subseteq F$, so there is some $a\in\dom(g)$ and some $A\in U_a$ with $F(A)\not\in U_{g(a)}$.  Since $g(a)\not\subseteq F(A)$, by the inductive hypothesis there is a finite $\mathcal{B}\subseteq \mathcal{C}(F(A))$ such that $\mathcal{B}\cup\{g(a)\}$ is not coherent.  By the continuity of $F$, for each $b\in \mathcal{B}$ there is an $a_b\subseteq A$ so that $F(U_{a_b})\subseteq U_b$.  Let $f\subseteq F$ be an element of $\mathbb{F}^\forall_\phi$ with $f(a_b)\supseteq b$ for all $b\in \mathcal{B}$.  Then $\{f,g\}$ is not coherent because $\{a\}\cup\{a_b\}_{b\in \mathcal{B}}\subseteq \mathcal{C}(A)$ is coherent but $\{g(a)\}\cup\{b\}_{b\in \mathcal{B}}$ is not.
\end{proof}


\begin{lemma}\label{thm:fragment}
  For any $A\in\mathbb{S}^\forall_\phi$ and $E\in\mathbb{S}^\exists_\phi$, there exist $a,e$ so that $A\in U_a$, $E\in U_e$, and $(a,e)$ is decisive for $\phi$.
\end{lemma}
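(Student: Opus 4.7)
The plan is to prove this by induction on $\phi$, following the structural pattern used throughout the section. The atomic case is immediate ($a = e = \ast$, and decisiveness is automatic), and the $\forall x\psi$ case reduces verbatim to the inductive hypothesis since $\mathbb{S}^Q_\phi$, $\mathbb{F}^Q_\phi$, and the definition of decisiveness all coincide with those of $\psi$. For $\phi = \bigwedge_{i \in I}\psi_i$, the domain of $A$ is finite, so I would apply the inductive hypothesis to each pair $(A(i), E_i)$, where $E_i$ is the unique element of $E \cap \mathbb{S}^\exists_{\psi_i}$, to obtain $(a_i, r_i)$ decisive for $\psi_i$. Assembling $a$ by $a(i) = a_i$ on $\dom(A)$ and setting $e = \{r_i\}_{i \in \dom(A)}$ then yields $a \subseteq A$, $e \subseteq E$, and $(a,e)$ decisive directly from the conjunction clause.

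The substantive case is $\phi = \neg\psi$, where $A$ is a continuous function $\mathbb{S}^\forall_\psi \to \mathbb{S}^\exists_\psi$ with the $\leq$-monotonicity property and $E \in \mathbb{S}^\forall_\psi$. First I would apply the inductive hypothesis to $(E, A(E))$ to obtain fragments $e_0 \subseteq E$ and $b_0 \subseteq A(E)$ with $(e_0, b_0)$ decisive for $\psi$. By continuity, $A^{-1}(U_{b_0})$ is open and contains $E$, so it contains some basic neighborhood $U_{e_0'}$ of $E$; since $e_0$ and $e_0'$ both lie in the maximal coherent set $\mathcal{C}(E)$, their coherent union is defined, and after replacing $e_0$ by this union, the $\subseteq$-preservation of decisiveness lets me assume $A(U_{e_0}) \subseteq U_{b_0}$ outright.

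Next, set $e = e_0$ and let $\dom(a) = \{e' \in \mathbb{F}^\forall_\psi : e' \leq e_0\}$, a finite set by the earlier lemma on $\{g \leq f\}$. For each such $e'$ the $\leq$-monotonicity property of $A$ supplies some $b_{e'} \leq b_0$ with $A(U_{e'}) \subseteq U_{b_{e'}}$; setting $a(e') := b_{e'}$ gives $a \subseteq A$ by construction. The $\leq$-preservation lemma for decisiveness applied to $(e_0, b_0)$ with $e' \leq e_0$ and $b_{e'} \leq b_0$ then makes each $(e', b_{e'})$ decisive for $\psi$, and taking $e^* = e'$ in the decisive clause for $\neg\psi$ shows that $(a, e)$ is decisive.

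The main obstacle is that the values $b_{e'}$ must be produced simultaneously and coherently so that $a$ actually satisfies the two structural requirements of $\mathbb{F}^\forall_{\neg\psi}$: namely $b_{e''} \subseteq b_{e'}$ whenever $e'' \subseteq e'$ are both in $\dom(a)$, and the existence of some $b \subseteq b_{e''}$ with $b \leq b_{e'}$ whenever $e'' \leq e'$. The $\leq$-monotonicity of $A$ alone only gives existence of each $b_{e'}$ in isolation. I would resolve this by constructing $a$ inductively over the finite poset $\dom(a)$, starting from $a(e_0) = b_0$ and, at each extension step, invoking Lemma \ref{thm:coherent_extension} together with the $\min$ and $\upharpoonright$ operations established earlier on $\mathbb{F}^Q_\psi$ to add one new value while preserving $a \subseteq A$, the monotonicity requirements, and the constraint $b_{e'} \leq b_0$.
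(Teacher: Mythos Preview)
Your proposal is correct and follows the same inductive skeleton as the paper, with the only substantive case being $\phi=\neg\psi$. The difference lies in how $a$ is produced and how decisiveness is verified.

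The paper does not build $a$ value-by-value. After enlarging $e_0$ to some $e\supseteq e_0$ with $A(U_e)\subseteq U_{a_0}$, it simply says: choose $a$ with $A\in U_a$ and $a_0\subseteq a(e)$. The point is that any such $a$ is a fragment of $A$, hence automatically lies in $\mathbb{F}^\forall_{\neg\psi}$ with all monotonicity and coherence requirements satisfied for free; there is no need to assemble $a$ from individually chosen values $b_{e'}$ and then argue they fit together. This is why the paper never touches the obstacle you identify in your last paragraph, and in particular never needs Lemma~\ref{thm:coherent_extension} here. For decisiveness, the paper then takes $e^*=e'\upharpoonright e_0\subseteq e'$ for each $e'\leq e$, and uses the built-in $\leq$-monotonicity of $a$ together with $\leq$-preservation of decisiveness.

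Your route---setting $e=e_0$, taking $\dom(a)=\{e'\leq e_0\}$, and using the $\leq$-monotonicity of $A$ to pick each $b_{e'}$---also works, and your choice $e^*=e'$ is cleaner than the paper's $e^*=e'\upharpoonright e_0$. But the inductive assembly you sketch in the final paragraph is unnecessary extra work: once you observe that you may take $a$ to be any element of $\mathcal{C}(A)$ containing $e_0$ in its domain with $a(e_0)\supseteq b_0$, the structural requirements on $a$ come for free.
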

\begin{proof}
By induction on $\phi$.  

  Suppose $\phi$ is $\neg\psi$.  By the inductive hypothesis, there are $e_0,a_0$ so that $E\in U_{e_0}$ and $A(E)\in U_{a_0}$ so that $(e_0,a_0)$ is decisive for $\psi$.  Since $E\in A^{-1}(U_{a_0})$, we may choose some $e\supseteq e_0$ so that $E\in U_{e}\subseteq U_{e_0}$ and $A(U_{e})\subseteq U_{a_0}$.  Choose $a$ so that $A\in U_a$ and $a_0\subseteq a(e)$.  Then for any $e'\leq e$, we have $e'\upharpoonright e_0\leq e_0$, so $a(e'\upharpoonright e_0)\leq a(e_0)$, and therefore $(e'\upharpoonright e_0, a(e'\upharpoonright e_0))$ is decisive for $\psi$.  So $(a,e)$ is decisive for $\phi$.
\end{proof}

In light of these lemmas, we can define:
\begin{definition}\label{def:satisfaction}
  We say $\mathfrak{M}\vDash^{\leq A,E}\phi[\vec b]$ if for some (equivalently, every) $(a,e)$ so that $A\in U_a$ and $E\in U_e$ and $(a,e)$ is decisive for $\phi$, $\mathfrak{M}\vDash^{\leq a,e}\phi[\vec b]$.
\end{definition}

\begin{theorem}\label{thm:ctbl_sat}
  If $\mathfrak{M}$ is countably saturated for first-order formulas then $\mathfrak{M}\vDash\phi[\vec b]$ if and only if for every $A\in\mathbb{S}^\forall_\phi$, there is an $E\in\mathbb{S}^\exists_\phi$ such that $\mathfrak{M}\vDash^{\leq A,E}\phi[\vec b]$.
\end{theorem}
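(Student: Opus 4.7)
The plan is to induct on the complexity of $\phi$. The atomic case is immediate since $\mathbb{S}^Q_\phi$ is a singleton and bounded satisfaction reduces to ordinary satisfaction. The $\bigwedge_i \psi_i$ case follows straightforwardly by applying the inductive hypothesis to each conjunct in the (finite) domain of $A$. The forward direction of $\forall x\psi$ is where countable saturation of $\mathfrak{M}$ first enters: by Lemma \ref{thm:fol}, $\mathfrak{M}\vDash^{\leq a,e}\psi[\vec b,b']$ is expressed by a first-order formula $\hat\psi^{\leq a,e}$, so for fixed $A$ the countable type $\{\neg\hat\psi^{\leq a,e}(y,\vec b) : (a,e)\text{ decisive}, a\subseteq A\}$ is inconsistent by the inductive hypothesis applied to any realizer $y$, and countable saturation reduces this to a finite subtype whose witnesses can be coherently combined into a single $E$ uniform in $b'$; the backward direction is immediate.

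The case $\phi = \neg\psi$ has a direct forward direction: if $\mathfrak{M}\vDash\neg\psi$ then the inductive hypothesis provides $A^\psi\in\mathbb{S}^\forall_\psi$ such that no $E^\psi$ satisfies the bounded form of $\psi$. Setting $E_\phi = A^\psi\in\mathbb{S}^\exists_\phi = \mathbb{S}^\forall_\psi$ and given decisive $(a,e)$ with $a\subseteq A_\phi, e\subseteq E_\phi$, decisiveness of $(a,e)$ for $\neg\psi$ yields $e^*\subseteq e$ with $(e,a(e^*))$ decisive for $\psi$; choosing any $E^\psi\in U_{a(e^*)}$ (non-empty by the earlier lemma) and invoking the defining property of $A^\psi$ forces $\mathfrak{M}\not\vDash^{\leq e,a(e^*)}\psi$, so $\mathfrak{M}\vDash^{\leq A_\phi,E_\phi}\neg\psi$.

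The main obstacle lies in the backward direction for $\neg\psi$, argued contrapositively: assuming $\mathfrak{M}\vDash\psi$, we must construct $A_\phi\in\mathbb{S}^\forall_\phi$ defeating every $E_\phi$. Unwinding the definitions, it suffices to produce a continuous, monotone partial function whose finite fragments $a_\phi\in\mathbb{F}^\forall_\phi$ satisfy that $(e^*,a_\phi(e^*))$ is decisive and $\mathfrak{M}\vDash^{\leq e^*,a_\phi(e^*)}\psi$ for each $e^*\in\dom(a_\phi)$, since the coherence lemma then propagates satisfaction to all larger $e'\supseteq e^*$. The construction proceeds by enumerating $\mathbb{F}^\forall_\psi = \{b_n\}_{n\in\mathbb{N}}$ and building a $\subseteq$-chain of coherent fragments $a^n_\phi$; at stage $n$, the inductive hypothesis applied to any $A_\psi\in U_{b_n}$ produces $E_\psi$ with $\mathfrak{M}\vDash^{\leq A_\psi,E_\psi}\psi$, and Lemma \ref{thm:fragment} then extracts a decisive finite pair whose existential component becomes the value of $a^{n+1}_\phi$ at a suitable extension of $b_n$. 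The difficulty is enforcing compatibility with earlier commitments and with the monotonicity constraints of $\mathbb{P}^\forall_\phi$ (sub-fragments and $\leq$-smaller inputs must map to appropriately compatible outputs); Lemma \ref{thm:coherent_extension} is the main tool, allowing each new commitment to be extended to one compatible with all prior bounds while preserving coherence. Defining $A_\phi$ as the strategy determined by the union of the $a^n_\phi$ then completes the construction.
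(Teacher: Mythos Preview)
Your proposal follows the same inductive skeleton as the paper: atomic trivial, $\bigwedge$ by applying the inductive hypothesis coordinatewise, $\forall x$ forward via countable saturation and Lemma~\ref{thm:fol}, $\neg\psi$ forward by taking the bad $A^\psi$ from the inductive hypothesis as the witness $E_\phi$, and $\neg\psi$ backward by assembling a strategy in $\mathbb{S}^\forall_\phi$ out of finite fragments.

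Two points where the details diverge are worth noting. In the $\forall x$ case you say the finite inconsistent subtype yields existential fragments that ``can be coherently combined into a single $E$''; but the finitely many $e_i$ extracted need not be pairwise coherent in $\mathbb{F}^\exists_\psi$, so no single $E$ need extend all of them, and a disjunction over several decisive pairs does not directly give one uniform $E$. The paper avoids this by arguing the contrapositive at the level of individual pairs: if no \emph{single} $(a,e)$ with $a\subseteq A$ satisfies $\mathfrak{M}\vDash\forall x\,\hat\psi^{\leq a,e}$, then each formula $\neg\hat\psi^{\leq a,e}(x)$ is satisfiable, and saturation realizes the whole countable type in one element $b'$, contradicting the inductive hypothesis for $\psi$ at $b'$. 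This yields a single $e$ and hence a single $E\in U_e$.

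For the backward direction of $\neg\psi$, the paper's construction is organized differently from yours: it first collects all pairs $(a,E)$ arising from the inductive hypothesis into $F_0$, prunes to $\subseteq$-minimal $a$'s ($F_1$), pads domain elements so that distinct outputs come only from incoherent inputs ($F_2$), and finally iterates a local repair to enforce the monotonicity clause of $\mathbb{S}^\forall_\phi$ ($F_3$). Your enumerate-and-extend approach using Lemma~\ref{thm:coherent_extension} is a reasonable alternative, but note that Lemma~\ref{thm:coherent_extension} by itself controls $\leq$-compatibility below a given fragment, not the global requirement that $A'\leq A$ implies $F(A')\leq F(A)$; you would still need something like the paper's $F_3$ stabilization to secure monotonicity of the limiting strategy.
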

\begin{proof}
  By induction on $\phi$.  For atomic formulas, this is the definition.

  Suppose $\phi$ is $\neg\psi$.  If $\mathfrak{M}\vDash\phi[\vec b]$ then $\mathfrak{M}\not\vDash\psi[\vec b]$, so by the inductive hypothesis there is some $A$ so that for every $E$, $\mathfrak{M}\not\vDash^{\leq A,E}\psi[\vec b]$.  Then for any $F\in\mathbb{S}^\forall_\phi$, $\mathfrak{M}\vDash^{\leq F,A}\phi[\vec b]$ since $\mathfrak{M}\not\vDash^{\leq A,F(A)}\psi[\vec b]$.

  Conversely, suppose $\mathfrak{M}\not\vDash\phi[\vec b]$, so by the inductive hypothesis, for every $A$ there is an $E$ such that $\mathfrak{M}\vDash^{\leq A,E}\psi[\vec b]$.  For each $A$, take some $E$ such that $\mathfrak{M}\vDash^{\leq A,E}\psi[\vec b]$, and choose some $a\subseteq A$, $e\subseteq E$ so that $\mathfrak{M}\vDash^{\leq a,e}\psi[\vec b]$.  Let $F_0$ be the set of all pairs of the form $(a,E)$ that we obtain this way.

  We define $F_1\subseteq F_0$ as follows: for each $a$, if there is any $a'\subsetneq a$ and $E'$ with $(a',E')\in F_0$ then $(a,E)\not\in F_1$ for any $E$, and if $a$ is $\subseteq$-minimal such that there is some $E$ with $(a,E)\in F_0$, we choose a single such $E$ and put $(a,E)$ in $F_1$.  Note that we have retained the property that, for each $A$, there is an $a\subseteq A$ and an $E$ with $(a,E)\in F_1$.

  We order $F_1$ as $\{(a_0,E_0),(a_1,E_1),\ldots\}$. We define $F(A)$ by finding the least $i$ such that there is an $a\leq a_i$ with $a\subseteq A$ and setting $F(A)=\max_{j\leq i}E_j$. $F$ is certainly continuous.

  For any $b$ and any $A$ with $b\subseteq F(A)$, there is some $i$ with $F(A)=\max_{j\leq i}E_j$. Choose $a\subseteq A$ large enough so that $a\upharpoonright a_i\leq a_i$ and, for all $j<i$ and all $a^*\leq a_j$, $\{a,a^*\}$ is not coherent. Then for any $a'\leq a$, there is some least $j\leq i$ so that $a'\upharpoonright a_j\leq a_j$ and for all $j'<j$ and $a^*\leq a_{j'}$, $\{a',a^*\}$ is not coherent. Therefore $F(U_{a'})\subseteq U_{\max_{j'\leq j}E_{j'}\upharpoonright b}$.

  Therefore $F\in\mathbb{S}^{\forall}_\phi$ and, for all $A$, $\mathfrak{M}\not\vDash^{\leq F,A}\phi[\vec b]$ since $\mathfrak{M}\vDash^{\leq A,F(A)}\psi[\vec b]$.

  Suppose $\phi$ is $\forall x\psi$.  If $\mathfrak{M}\vDash\phi[\vec b]$ then, by the inductive hypothesis, for every $b'\in|\mathfrak{M}|$ and every $A$, there is an $E$ so that $\mathfrak{M}\vDash^{\leq A,E}\psi[\vec b,x\mapsto b']$.  For every $a\subseteq A$ and every $e$ such that $(a,e)$ is decisive for $\psi$, we have a formula $\hat\psi^{\leq a,e}$.  If there is any $a\subseteq A$ and $e$ so that $\mathfrak{M}\vDash\forall x\hat\psi^{\leq a,e}[\vec b]$ then we may take any $E\in U_e$ and we have $\mathfrak{M}\vDash^{\leq A,E}\phi[\vec b]$.  Suppose not, so for every $a\subseteq A$ and every $e$ so that $(a,e)$ is decisive for $\psi$, there is a $b'\in|\mathfrak{M}|$ so that $\mathfrak{M}\vDash\neg\hat\psi^{\leq a,e}[\vec b,x\mapsto b']$.  There are countably many such pairs, so, by saturation, there is a $b'\in|\mathfrak{M}|$ so that $\mathfrak{M}\vDash\neg\hat\psi^{\leq a,e}[\vec b,x\mapsto b']$ for all $a\subseteq A$ and $e$ so that $(a,e)$ is decisive for $\psi$.  But this contradicts our assumption, because $\mathfrak{M}\vDash\psi[\vec b,x\mapsto b']$, so there is some $E$ so that $\mathfrak{M}\vDash^{\leq A,E}\psi[\vec b,x\mapsto b']$, and therefore some $a\subseteq A$ and $e\subseteq E$ so that $(a,e)$ is decisive and $\mathfrak{M}\vDash^{\leq a,e}\psi[\vec b,x\mapsto b']$.

  If $\mathfrak{M}\not\vDash\phi[\vec b]$ then, by the inductive hypothesis, there is a $b'\in|\mathfrak{M}|$ and an $A$ so that for every $E$, $\mathfrak{M}\not\vDash^{A,E}\psi[\vec b,x\mapsto b']$.  Then for this $A$ and any $E$, we have $\mathfrak{M}\not\vDash^{\leq A,E}\phi[\vec b]$.

  Suppose $\phi$ is $\bigwedge_{i\in I}\psi_i$.  If $\mathfrak{M}\vDash\phi[\vec b]$ then, by the inductive hypothesis, for each $A$ and each $i\in\dom(A)$ there is an $E_i$ so that $\mathfrak{M}\vDash^{\leq A(i),E_i}\psi_i[\vec b]$.  For each $i\in\dom(A)$, let $E'_i=\max_{j\in\dom(A),\ E_j\in\mathbb{S}^\exists_{\psi_i}}E_j$ (so certainly $E_i\leq E'_i$) and let $E=\{E'_i\mid i\in\dom(A)\}$.

  If $\mathfrak{M}\not\vDash\phi[\vec b]$ then, by the inductive hypothesis, there is an $i\in I$ and an $A_i$ so that for any $E_i$, $\mathfrak{M}\not\vDash^{\leq A_i,E_i}\psi_i[\vec b]$.  We may take $A$ with domain $[0,i]$ and $A(i)=A_i$ and we have $\mathfrak{M}\not\vDash^{\leq A,E}\phi[\vec b]$ for any $E$.
\end{proof}

\begin{theorem}\label{thm:main}
  Let $\{\mathfrak{M}_i\}_{i\in\mathbb{N}}$ be a set of structures, let $\phi$ be a formula of $\mathrm{L}_{\omega_1,\omega}$ with free variables $x_1,\ldots,x_n$, and for each $j\leq n$ let $\langle b_j^i\rangle_{i\in\mathbb{N}}$ be a sequence with $b_j^i\in|\mathfrak{M}_i|$ for all $i,j$ and let $b_j\in|\mathfrak{M}^{\mathcal{U}}|$ be the element corresponding to the ultraproduct of the sequence $\langle b_j^i\rangle$.  Then $\mathfrak{M}^{\mathcal{U}}\vDash\phi[b_1,\ldots,b_n]$ if and only if for every $A\in\mathbb{S}^\forall_\phi$ there is an $E\in\mathbb{S}^\exists_\phi$ such that 
\[\{i\mid\mathfrak{M}_i\vDash^{\leq A,E}\phi[b_1^i,\ldots,b_n^i]\}\in\mathcal{U}.\]
\end{theorem}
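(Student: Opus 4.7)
The plan is to reduce the theorem to Theorem \ref{thm:ctbl_sat} plus the classical \L{o}\'s theorem, using Lemma \ref{thm:fol} as the bridge. The crucial structural fact I will invoke is that any ultraproduct $\mathfrak{M}^{\mathcal{U}}$ over a non-principal ultrafilter on $\mathbb{N}$ is countably saturated for first-order formulas, so Theorem \ref{thm:ctbl_sat} applies to $\mathfrak{M}^{\mathcal{U}}$. This means the condition $\mathfrak{M}^{\mathcal{U}}\vDash\phi[\vec b]$ is equivalent to ``for every $A\in\mathbb{S}^\forall_\phi$ there exists $E\in\mathbb{S}^\exists_\phi$ with $\mathfrak{M}^{\mathcal{U}}\vDash^{\leq A,E}\phi[\vec b]$'', and my task reduces to trading this quantifier-style condition on the ultraproduct for the corresponding condition on the $\mathfrak{M}_i$'s holding on a $\mathcal{U}$-large set.

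For the forward direction, assume $\mathfrak{M}^{\mathcal{U}}\vDash\phi[\vec b]$ and fix $A\in\mathbb{S}^\forall_\phi$. Theorem \ref{thm:ctbl_sat} gives an $E$ with $\mathfrak{M}^{\mathcal{U}}\vDash^{\leq A,E}\phi[\vec b]$. By Lemma \ref{thm:fragment} I can choose $a\in\mathbb{F}^\forall_\phi$ and $e\in\mathbb{F}^\exists_\phi$ with $a\subseteq A$, $e\subseteq E$, and $(a,e)$ decisive for $\phi$; unfolding the definition of $\vDash^{\leq A,E}$, this gives $\mathfrak{M}^{\mathcal{U}}\vDash^{\leq a,e}\phi[\vec b]$. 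Lemma \ref{thm:fol} then yields $\mathfrak{M}^{\mathcal{U}}\vDash\hat\phi^{\leq a,e}[\vec b]$, which is a first-order statement, so \L{o}\'s's theorem applied to the tuple of parameters $\langle b_j^i\rangle_{\mathcal{U}}$ gives $\{i\mid\mathfrak{M}_i\vDash\hat\phi^{\leq a,e}[\vec b^i]\}\in\mathcal{U}$. Reversing the Lemma \ref{thm:fol} translation in each $\mathfrak{M}_i$ then gives $\{i\mid\mathfrak{M}_i\vDash^{\leq a,e}\phi[\vec b^i]\}=\{i\mid\mathfrak{M}_i\vDash^{\leq A,E}\phi[\vec b^i]\}\in\mathcal{U}$.

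For the converse, suppose that for every $A$ some $E$ satisfies the displayed condition; by Theorem \ref{thm:ctbl_sat} it suffices to show $\mathfrak{M}^{\mathcal{U}}\vDash^{\leq A,E}\phi[\vec b]$ for each such pair. Fix $A$ and the witnessing $E$, and again pick $(a,e)$ with $a\subseteq A$, $e\subseteq E$, and $(a,e)$ decisive. The hypothesis rewrites as $\{i\mid\mathfrak{M}_i\vDash^{\leq a,e}\phi[\vec b^i]\}\in\mathcal{U}$, and Lemma \ref{thm:fol} in each $\mathfrak{M}_i$ converts this to $\{i\mid\mathfrak{M}_i\vDash\hat\phi^{\leq a,e}[\vec b^i]\}\in\mathcal{U}$. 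Another application of \L{o}\'s's theorem delivers $\mathfrak{M}^{\mathcal{U}}\vDash\hat\phi^{\leq a,e}[\vec b]$, which by Lemma \ref{thm:fol} means $\mathfrak{M}^{\mathcal{U}}\vDash^{\leq a,e}\phi[\vec b]$, hence $\mathfrak{M}^{\mathcal{U}}\vDash^{\leq A,E}\phi[\vec b]$ as needed.

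The main obstacle is essentially bookkeeping rather than mathematical depth: I need to be comfortable moving freely between the ``topological'' semantics $\vDash^{\leq A,E}$ and the ``finite fragment'' semantics $\vDash^{\leq a,e}$, relying on Lemma \ref{thm:fragment} to produce decisive fragments inside any given opens $U_A,U_E$ and on the coherence-based invariance of $\vDash^{\leq a,e}$ under enlargement of fragments. Once that translation is in hand, the only substantive ingredient is countable saturation of $\mathfrak{M}^{\mathcal{U}}$, which is what makes Theorem \ref{thm:ctbl_sat} available on the ultraproduct side; everything else is \L{o}\'s's theorem applied to the single first-order formula $\hat\phi^{\leq a,e}$ produced by Lemma \ref{thm:fol}.
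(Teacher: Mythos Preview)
Your proposal is correct and follows essentially the same route as the paper: use countable saturation of $\mathfrak{M}^{\mathcal{U}}$ to invoke Theorem \ref{thm:ctbl_sat}, pass to a decisive fragment $(a,e)$ via Lemma \ref{thm:fragment}, translate to the first-order formula $\hat\phi^{\leq a,e}$ via Lemma \ref{thm:fol}, apply the \L{o}\'s Theorem, and translate back. The paper's proof is organized identically, with the same lemmas invoked in the same order for both directions.
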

\begin{proof}
  Suppose $\mathfrak{M}^{\mathcal{U}}\vDash\phi[b_1,\ldots,b_n]$.  Since $\mathfrak{M}^{\mathcal{U}}$ is countably saturated, by Theorem \ref{thm:ctbl_sat}, for each $A\in\mathbb{S}^\forall_\phi$ there is an $E\in\mathbb{S}^\exists_\phi$ such that $\mathfrak{M}^{\mathcal{U}}\vDash^{\leq A,E}\phi[b_1,\ldots,b_n]$.  By Lemma \ref{thm:fragment}, there are $a,e$ so that $A\in U_a$, $E\in U_e$, and $(a,e)$ is decisive for $\phi$, and therefore $\mathfrak{M}^{\mathcal{U}}\vDash^{\leq a,e}\phi[b_1,\ldots,b_n]$. By Lemma \ref{thm:fol} there is a first-order formula $\hat\phi^{\leq a,e}$ so that $\mathfrak{M}^{\mathcal{U}}\vDash\hat\phi^{\leq a,e}[b_1,\ldots,b_n]$, so by the \L{o}\'s Theorem, $\{i\mid \mathfrak{M}_i\vDash\hat\phi^{\leq a,e}[b^i_1,\ldots,b^i_n]\}\in\mathcal{U}$.  Therefore $\{i\mid\mathfrak{M}_i\vDash^{\leq a,e}\phi[b_1,\ldots,b_n]\}\in\mathcal{U}$, and therefore $\{i\mid\mathfrak{M}_i\vDash^{\leq A,E}\phi[b_1,\ldots,b_n]\}\in\mathcal{U}$.

  For the the converse, suppose that for each $A\in\mathbb{S}^\forall_\phi$ there is an $E\in\mathbb{S}^\exists_\phi$ such that
\[S=\{i\mid\mathfrak{M}_i\vDash^{\leq A,E}\phi[b_1^i,\ldots,b_n^i]\}\in\mathcal{U}.\]
Again, by Lemma \ref{thm:fragment}, choose $a,e$ so that $A\in U_a$, $E\in U_e$, and $(a,e)$ is decisive for $\phi$, so that for each $i\in S$, $\mathfrak{M}_i\vDash^{\leq a,e}\phi[b_1^i,\ldots,b_n^i]$.  Then, by Lemma \ref{thm:fol}, for each $i\in S$ $\mathfrak{M}_i\vDash\hat\phi^{\leq a,e}[b_1^i,\ldots,b_n^i]$.  Since $\hat\phi^{\leq a,e}$ is first-order and $S\in\mathcal{U}$, it follows that $\mathfrak{M}^{\mathcal{U}}\vDash\hat\phi^{\leq a,e}[b_1,\ldots,b_n]$, so also $\mathfrak{M}^{\mathcal{U}}\vDash^{\leq A,E}\phi[b_1,\ldots,b_n]$.  Since this holds for every $A$, by Theorem \ref{thm:ctbl_sat} also $\mathfrak{M}^{\mathcal{U}}\vDash \phi[b_1,\ldots,b_n]$.
\end{proof}

\begin{cor}\label{thm:sentence}
  If $\{\mathfrak{M}_i\}_{i\in\mathbb{N}}$ is a sequence of structures, $\mathcal{U}$ is an ultrafilter, and $\sigma$ is a sentence in $\mathrm{L}_{\omega_1,\omega}$ then $\mathfrak{M}^{\mathcal{U}}\vDash\sigma$ if and only if, for every bound $A\in\mathbb{S}^\forall_\sigma$ there is a bound $E\in\mathbb{S}^\exists_\sigma$ such that
\[\{i\mid\mathfrak{M}_i\vDash^{\leq A,E}\sigma\}\in\mathcal{U}.\]
\end{cor}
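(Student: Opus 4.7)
The corollary is the specialization of Theorem \ref{thm:main} to the case where the formula in question has no free variables. So the plan is essentially a one-line reduction: apply Theorem \ref{thm:main} with $n=0$, taking $\phi$ to be $\sigma$ itself. Since $\sigma$ has no free variables, there are no sequences $\langle b_j^i\rangle$ to specify and no ultraproduct elements $b_j$ to evaluate against, so the biconditional
\[\mathfrak{M}^{\mathcal{U}}\vDash\sigma \Longleftrightarrow \forall A\in\mathbb{S}^\forall_\sigma\, \exists E\in\mathbb{S}^\exists_\sigma\; \{i\mid\mathfrak{M}_i\vDash^{\leq A,E}\sigma\}\in\mathcal{U}\]
is literally what Theorem \ref{thm:main} says in this degenerate case.

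There is essentially no obstacle here; the only thing to verify is that the constructions from Section \ref{sec:bounded} make sense for sentences, i.e.\ that the definitions of $\mathbb{S}^\forall_\sigma$, $\mathbb{S}^\exists_\sigma$, and the satisfaction relation $\mathfrak{M}\vDash^{\leq A,E}\sigma$ do not require the formula to have free variables. Inspecting the recursive clauses of the definitions, each case treats the free variables of the formula as inert parameters passed through the induction, so specializing to $n=0$ causes no issue.

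Hence the proof proposal is simply: \emph{Apply Theorem \ref{thm:main} with $\phi=\sigma$ and $n=0$.} No additional argument is required.
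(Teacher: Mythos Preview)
Your proposal is correct and matches the paper's approach: the corollary is stated immediately after Theorem \ref{thm:main} with no proof, as it is simply the $n=0$ case of that theorem. Nothing more is needed.
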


\begin{cor}\label{thm:cofinite}
  If $\{\mathfrak{M}_i\}_{i\in\mathbb{N}}$ is a sequence of structures and $\sigma$ is a sentence in $\mathrm{L}_{\omega_1,\omega}$ then $\mathfrak{M}^{\mathcal{U}}\vDash\sigma$ for every non-principal ultrafilter $\mathcal{U}$ if and only if, for every bound $A\in\mathbb{S}^\forall_\sigma$ there is a bound $E\in\mathbb{S}^\exists_\sigma$ such that $\{i\mid\mathfrak{M}_i\vDash^{\leq A,E}\sigma\}$ is cofinite.
\end{cor}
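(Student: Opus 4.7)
The plan is to derive this corollary directly from Corollary \ref{thm:sentence} using the elementary fact that a subset $S \subseteq \mathbb{N}$ is cofinite if and only if $S \in \mathcal{U}$ for every non-principal ultrafilter $\mathcal{U}$ on $\mathbb{N}$ (one direction follows from non-principal ultrafilters containing the Fr\'echet filter, and the other because if $\mathbb{N}\setminus S$ is infinite then the filter generated by $\mathbb{N}\setminus S$ and the Fr\'echet filter is proper and extends to some non-principal $\mathcal{U}$ with $S\notin\mathcal{U}$). The $(\Leftarrow)$ direction is essentially immediate: if for each $A\in\mathbb{S}^\forall_\sigma$ some $E\in\mathbb{S}^\exists_\sigma$ makes $\{i:\mathfrak{M}_i\vDash^{\leq A,E}\sigma\}$ cofinite, then this set lies in every non-principal $\mathcal{U}$, and Corollary \ref{thm:sentence} gives $\mathfrak{M}^{\mathcal{U}}\vDash\sigma$.

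For the $(\Rightarrow)$ direction, fix $A$ and argue by contradiction. Write $S_E=\{i:\mathfrak{M}_i\vDash^{\leq A,E}\sigma\}$ and $T_E=\mathbb{N}\setminus S_E$, and suppose every $T_E$ is infinite. The goal is to show that $\{T_E\}_{E\in\mathbb{S}^\exists_\sigma}$ together with the Fr\'echet filter has the finite intersection property. Given any $E_1,\ldots,E_k$, set $E^\ast=\max\{E_1,\ldots,E_k\}\in\mathbb{S}^\exists_\sigma$; then $E_j\leq E^\ast$ for each $j$, and the monotonicity of the bounded semantics in its existential bound yields $S_{E_j}\subseteq S_{E^\ast}$, hence $T_{E^\ast}\subseteq\bigcap_j T_{E_j}$. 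Since $T_{E^\ast}$ is infinite, so is this intersection, and intersecting with any cofinite set keeps it infinite. Extending the resulting filter to a non-principal ultrafilter $\mathcal{U}$, no $S_E$ lies in $\mathcal{U}$ by construction. But by hypothesis $\mathfrak{M}^{\mathcal{U}}\vDash\sigma$, so Corollary \ref{thm:sentence} produces some $E$ with $S_E\in\mathcal{U}$, a contradiction.

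The main obstacle is justifying the monotonicity invoked above: that $E\leq E'$ implies $\mathfrak{M}\vDash^{\leq A,E}\sigma\Rightarrow\mathfrak{M}\vDash^{\leq A,E'}\sigma$. This is morally built into the design of the orderings on $\mathbb{S}^\exists_\phi$ — larger elements are intended to represent more generous existential bounds — but is not spelled out explicitly in the excerpt. The natural approach is a straightforward induction on $\sigma$ paralleling the lemmas proved for $\min$ and $\upharpoonright$, reducing as usual to the case $\sigma=\neg\psi$ with $Q=\forall$, where one uses the $\max$ construction on strategies (together with the continuity and coherence of the resulting functional) to produce the witnesses required by the definition of $\vDash^{\leq A,E'}\sigma$ from those supplied by $\vDash^{\leq A,E}\sigma$.
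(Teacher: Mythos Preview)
The paper states this corollary without proof, so there is no argument to compare against directly. Your overall strategy---reducing to Corollary~\ref{thm:sentence} via the characterization of cofinite sets as those lying in every non-principal ultrafilter---is the natural one, and your $(\Leftarrow)$ direction is correct and immediate. For $(\Rightarrow)$ you have also correctly isolated the real content: the quantifier swap from ``$\forall\mathcal{U}\,\exists E$'' to ``$\exists E\,\forall\mathcal{U}$'' requires that the family $\{T_E\}_{E}$ have the finite intersection property together with cofinite sets, and this comes down to directedness of $(\mathbb{S}^\exists_\sigma,\leq)$ via $\max$ plus monotonicity of $\vDash^{\leq A,E}\sigma$ in $E$.

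Where the proposal becomes thin is the sketch of the monotonicity lemma itself. First, a minor point: the paper states the property $F\leq\max\mathcal{F}$ only for $\mathcal{F}\subseteq\mathbb{S}^\forall_\phi$, so using $\max$ on $\mathbb{S}^\exists_\sigma$ already requires a small extension (easy, but worth noting). More substantively, your phrase ``reducing as usual to the case $\sigma=\neg\psi$ with $Q=\forall$'' does not quite match what is needed: the parameter is $E\in\mathbb{S}^\exists_\sigma$, and for $\sigma=\neg\psi$ one has $\mathbb{S}^\exists_\sigma=\mathbb{S}^\forall_\psi$, so the induction must track \emph{both} monotonicities simultaneously (increasing the $\exists$-bound preserves truth; increasing the $\forall$-bound preserves falsity). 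The genuine work is not the $\max$ construction---that is already in the paper---but rather bridging the strategy-level ordering $E\leq E'$ and the fragment-level definition of $\vDash^{\leq a,e}$. Concretely, given a decisive $(a,e)$ with $a\subseteq A$, $e\subseteq E$ witnessing $\mathfrak{M}\vDash^{\leq A,E}\sigma$, one must produce a decisive $(a',e')$ with $a'\subseteq A$, $e'\subseteq E'$ and transfer the satisfaction; since $e\subseteq E$ and $e'\subseteq E'$ need not be coherent, the coherence lemma does not apply directly. At the fragment level the $\neg\psi$ clause is visibly upward-monotone in $e$ (the existential over $e'\leq e$ gives this for free), but lifting this to strategies deserves an explicit argument rather than the gesture you give.
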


\begin{cor}\label{thm:families}
  Let $\mathcal{M}$ be some family of structures and $\sigma$ is a sentence in $\mathrm{L}_{\omega_1,\omega}$.  Then $\mathcal{M}^{\mathcal{U}}\vDash\sigma$ for every $\{\mathfrak{M}_i\}_{i\in\mathbb{N}}\subseteq\mathcal{M}$ and every non-principal ultrafilter $\mathcal{U}$ if and only if, or every bound $A\in\mathbb{S}^\forall_\sigma$ there is a bound $E\in\mathbb{S}^\exists_\sigma$ such that $\mathfrak{M}\vDash^{\leq A,E}\sigma$ for all $\mathfrak{M}\in\mathcal{M}$.
\end{cor}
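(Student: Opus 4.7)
The right-to-left direction is immediate from Corollary \ref{thm:sentence}.  Assume that for every $A\in\mathbb{S}^\forall_\sigma$ there is $E_A\in\mathbb{S}^\exists_\sigma$ with $\mathfrak{M}\vDash^{\leq A,E_A}\sigma$ for all $\mathfrak{M}\in\mathcal{M}$; then for any sequence $\{\mathfrak{M}_i\}_{i\in\mathbb{N}}\subseteq\mathcal{M}$, any non-principal $\mathcal{U}$, and any $A$, the set $\{i\mid\mathfrak{M}_i\vDash^{\leq A,E_A}\sigma\}$ equals $\mathbb{N}\in\mathcal{U}$, so Corollary \ref{thm:sentence} yields $\mathfrak{M}^{\mathcal{U}}\vDash\sigma$.

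For the converse I argue by contrapositive.  Suppose the bounded condition fails at some $A\in\mathbb{S}^\forall_\sigma$, so that for every $E\in\mathbb{S}^\exists_\sigma$ there is $\mathfrak{M}_E\in\mathcal{M}$ with $\mathfrak{M}_E\not\vDash^{\leq A,E}\sigma$.  My plan is to assemble a single sequence $\{\mathfrak{N}_n\}_{n\in\mathbb{N}}\subseteq\mathcal{M}$ so that, \emph{using this same} $A$, the set $\{n\mid\mathfrak{N}_n\not\vDash^{\leq A,E}\sigma\}$ is infinite for every $E\in\mathbb{S}^\exists_\sigma$; the contrapositive of Corollary \ref{thm:cofinite} will then furnish a non-principal $\mathcal{U}$ with $\mathfrak{N}^{\mathcal{U}}\not\vDash\sigma$, contradicting the family hypothesis.

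A routine induction on $\sigma$ shows that the sets $\mathbb{P}^\forall_\sigma$ and $\mathbb{P}^\exists_\sigma$ are countable (finite partial functions between previously countable sets, finite subsets with at most one entry per index, etc.), and hence so are $\mathbb{F}^\forall_\sigma$, $\mathbb{F}^\exists_\sigma$ and the set $\mathcal{D}$ of decisive pairs $(a,e)$ with $a\in\mathcal{C}(A)$; by Lemma \ref{thm:fragment} applied to $A$ and any $E\in\mathbb{S}^\exists_\sigma$, $\mathcal{D}$ is also non-empty.  I enumerate $\mathcal{D}$ with each member repeated infinitely often as $(a^{(n)},e^{(n)})_{n\in\mathbb{N}}$; for each $n$ I pick some $E_n\in U_{e^{(n)}}$ (non-empty by the preceding lemma) and set $\mathfrak{N}_n:=\mathfrak{M}_{E_n}$.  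Because $(a^{(n)},e^{(n)})$ is decisive with $a^{(n)}\subseteq A$ and $e^{(n)}\subseteq E_n$, the equivalence of decisive witnesses built into the definition of $\vDash^{\leq A,E}$ gives $\mathfrak{N}_n\not\vDash^{\leq a^{(n)},e^{(n)}}\sigma$.  Finally, for any $E\in\mathbb{S}^\exists_\sigma$ Lemma \ref{thm:fragment} yields some $(a,e)\in\mathcal{D}$ with $e\in\mathcal{C}(E)$; the repetition supplies infinitely many $n$ with $(a^{(n)},e^{(n)})=(a,e)$, and for each such $n$ the same equivalence upgrades $\mathfrak{N}_n\not\vDash^{\leq a,e}\sigma$ to $\mathfrak{N}_n\not\vDash^{\leq A,E}\sigma$, as required.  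The only delicate point is this repetition trick: Lemma \ref{thm:fragment} produces a \emph{single} decisive witness per $E$, so duplicating each element of $\mathcal{D}$ infinitely often is what converts that lone witness into the infinite index set demanded by the cofinite criterion of Corollary \ref{thm:cofinite}.
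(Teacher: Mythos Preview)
The paper states this corollary without proof, so there is no argument to compare against; your proof is correct and supplies the details the paper omits.  The right-to-left direction is exactly as one would expect.  For the left-to-right direction your diagonalisation over the countable set of decisive pairs $(a,e)$ with $a\in\mathcal{C}(A)$ is the natural way to reduce the a~priori uncountable family $\{E\in\mathbb{S}^\exists_\sigma\}$ to a countable sequence of structures: the point, which you use correctly, is that by the definition of $\vDash^{\leq A,E}$ the truth value depends only on a decisive fragment $(a,e)$ with $a\subseteq A$, $e\subseteq E$, and there are only countably many such fragments.  The repetition device then converts Lemma~\ref{thm:fragment} into the ``failure set is infinite for every $E$'' needed for the contrapositive of Corollary~\ref{thm:cofinite}.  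The countability of $\mathbb{P}^Q_\sigma$ (hence of $\mathbb{F}^Q_\sigma$ and of $\mathcal{D}$) is indeed a routine induction on $\sigma$, and the non-emptiness of $U_{e^{(n)}}$ is the lemma immediately preceding the definition of $\mathcal{C}(F)$.
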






  

\section{Applications}\label{sec:applications}

\subsection{Interpreting the Definition}

Before giving concrete applications, we consider what our definitions mean for relatively simple formulas.  First, when $\phi$ a first-order formula, both $\mathbb{S}^\forall_\phi$ and $\mathbb{S}^\exists_\phi$ are one point sets---that is, the bound is trivial since there is only one possible ``bound'' on the statement.  In particular, Theorem \ref{thm:main} for first-order formulas says nothing other than \L{o}\'s Theorem.

With a $\Pi^{\mathbb{N}}_1$ formula $\phi=\bigwedge_{i\in\mathbb{N}}\phi_i$, where each $\phi_i$ is first-order, $\mathbb{S}^\forall_\phi$ is isomorphic to $\mathbb{N}$ as an ordered set.  (Formally, $\mathbb{S}^\forall_\phi$ is a function $F$ with domain $[0,n]$ and, for each $i\leq n$, $F(i)$ belongs to a one point set.)  $\mathbb{S}^\exists_\phi$ is a one point set (it is a union of one point sets, and we should treat the elements of these sets as being identical).

When $\phi$ is $\Sigma^{\mathbb{N}}_1$, $\bigvee_{i\in\mathbb{N}}\phi_{i}$m (that is, $\neg\bigwedge_{i\in\mathbb{N}}\neg\phi_i$) where each $\phi_i$ is first-order, $\mathbb{S}^\exists_\phi$ is essentially $\mathbb{N}$ while $\mathbb{S}^\forall_\phi$ is a one point set.

When $\phi$ is a $\Pi^{\mathbb{N}}_2$ formula $\bigwedge_{i\in\mathbb{N}}\bigvee_{j\in\mathbb{N}}\phi_{i,j}$ where each $\phi_{i,j}$ is a first-order set, we get a slightly more interesting case: both $\mathbb{S}^\forall_\phi$ and $\mathbb{S}^\exists_\phi$ are essentially $\mathbb{N}$.  In this case, Theorem \ref{thm:main} is essentially the transfer theorem.

When $\phi$ is a $\Sigma^{\mathbb{N}}_2$ formula $\bigvee_{i\in\mathbb{N}}\bigwedge_{j\in\mathbb{N}}\phi_{i,j}$, $\mathbb{S}^\exists_\phi$ is $\mathbb{N}$, but $\mathbb{S}^\forall_\phi$ now consists of monotone functions from $\mathbb{N}$ to $\mathbb{N}$.  When $\phi$ is $\bigwedge_{i\in\mathbb{N}}\bigvee_{j\in\mathbb{N}}\bigwedge_{k\in\mathbb{N}}\phi_{i,j,k}$, $\mathbb{S}^\exists_\phi$ is still $\mathbb{N}$, but $\mathbb{S}^\forall_\phi$ is now a function with domain $[0,n]$ and range monotone functions from $\mathbb{N}$ to $\mathbb{N}$.  When $F\in\mathbb{S}^\forall_\phi$, we lose nothing by replacing $F$ with the function $F'$ with $\dom(F')=\dom(F)$ and $F'(i)(k)=\max_{i\leq n}F(i)(k)$, so we may assume $F$ is constant---that is, we may view $F$ as an element of $\mathbb{N}$ together with a single monotone function from $\mathbb{N}$ to $\mathbb{N}$.  This is precisely the sort of bound we described in Example \ref{ex:convergence}.

\subsection{Approximate Subgroups}

An example of this sort of interpretation being applied to a $\Pi^{\mathbb{N}}_3$ formula appears in \cite{hrushovski}.

\begin{definition}
  If $X$ and $Y$ are subsets of a group $G$, we say $X$ and $Y$ are \emph{$e$-commensurable} if each is contained in a union of at most $e$ right cosets of the other.
\end{definition}

The main result of \cite{hrushovski} is
\begin{theorem}
  For any function $F:\mathbb{N}^2\rightarrow\mathbb{N}$ and any $k\in\mathbb{N}$, there are $\tilde e,\tilde c$ such that for any group $G$ and any finite $X\subseteq G$ with $|XX^{-1}X|\leq k|X|$, there are $e\leq \tilde e$, $c\leq \tilde c$, and 
\[X_{F(e,c)}\subseteq X_{F(e,c)-1}\subseteq\cdots\subseteq X_1\subseteq X^{-1}XX^{-1}X\]
such that  $X$ and $X_1$ are $e$-commensurable and, for $1\leq m,n<N$, we have
\begin{itemize}
\item $X_n=X_n^{-1}$,
\item $X_{n+1}X_{n+1}\subseteq X_n$,
\item $X_n$ is contained in the union of $c$ right cosets of $X_{n+1}$,
\item $[X_n,X_m]\subseteq X_k$ for any $k<\min\{F(e,c),n+m\}$.
\end{itemize}
\end{theorem}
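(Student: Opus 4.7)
The plan is to express the theorem's conclusion as an $\mathrm{L}_{\omega_1,\omega}$ sentence $\sigma_k$, invoke Hrushovski's underlying ultraproduct statement to conclude $\mathfrak{M}^{\mathcal U}\vDash\sigma_k$, and then read off the bounded, uniform version via Corollary \ref{thm:families}.

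Fix $k$. Working in an expansion of the language of groups by a predicate for $X$ together with a sort for distinguished subsets of the ambient group, let $\mathcal{M}_k$ denote the class of such structures $(G,X)$ in which $X$ is finite and $|XX^{-1}X|\leq k|X|$, and set
\[\sigma_k \;\equiv\; \bigvee_{(e,c)\in\mathbb{N}^2}\,\bigwedge_{n\in\mathbb{N}}\,\exists Y_1\cdots\exists Y_n\;\phi_{e,c,n}(Y_1,\ldots,Y_n),\]
where $\phi_{e,c,n}$ is the first-order formula asserting that $Y_n\subseteq\cdots\subseteq Y_1\subseteq X^{-1}XX^{-1}X$, that $X$ is $e$-commensurable with $Y_1$, and that the four bulleted properties hold for all $1\leq m,m'<n$. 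Hrushovski's ultraproduct result \cite{hrushovski} furnishes, in a countably saturated ultraproduct, an actual infinite chain of the required kind with a fixed pair $(e,c)$, and therefore supplies chains of every finite length, so $\mathfrak{M}^{\mathcal U}\vDash\sigma_k$ for every sequence $\{\mathfrak{M}_i\}\subseteq\mathcal{M}_k$ and every non-principal $\mathcal U$.

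Applying Corollary \ref{thm:families} to $\mathcal{M}_k$ and $\sigma_k$ then yields, for every $A\in\mathbb{S}^\forall_{\sigma_k}$, an $E\in\mathbb{S}^\exists_{\sigma_k}$ such that $\mathfrak{M}\vDash^{\leq A,E}\sigma_k$ for every $\mathfrak{M}\in\mathcal{M}_k$. Following the analysis of Section \ref{sec:applications}, the $\Sigma^{\mathbb{N}}_2$ shape of $\sigma_k$ identifies $\mathbb{S}^\forall_{\sigma_k}$ with (monotone) functions $F:\mathbb{N}^2\to\mathbb{N}$ and $\mathbb{S}^\exists_{\sigma_k}$ with finite subsets of $\mathbb{N}^2$, which can be replaced by a single pair $(\tilde e,\tilde c)$ of coordinatewise maxima. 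Unfolding the definition of $\vDash^{\leq A,E}$ for this $\sigma_k$ produces exactly the theorem: there is some $(e,c)$ with $e\leq\tilde e$ and $c\leq\tilde c$, together with $Y_1,\ldots,Y_{F(e,c)}$ satisfying all listed conditions.

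The main obstacle is the translation in the final step: elements of $\mathbb{S}^\forall_{\sigma_k}$ satisfy a monotonicity requirement that an arbitrary $F$ need not obey, so one must either show the requirement is automatic or, as is more natural, replace $F$ by a monotone majorant $F^*\geq F$; this weakening is harmless, since a chain of length $F^*(e,c)$ supplies a chain of length $F(e,c)$. A secondary point, which must be pinned down first, is the formal framework that makes the quantifiers $\exists Y_i$ genuinely first-order---either a sorted language with a subsets sort, or a countable family of new predicate symbols indexed by the $Y_n$---so that both Hrushovski's construction and the machinery of Section \ref{sec:bounded} apply without modification.
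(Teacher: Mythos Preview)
Your proposal is correct and follows the approach the paper sketches: formulate the ultraproduct version as an $\mathrm{L}_{\omega_1,\omega}$ sentence, invoke Hrushovski's result to verify it in every ultraproduct, and read off the uniform finitary statement via Corollary~\ref{thm:families}. The paper does not give a formal proof here---it simply states the ultraproduct theorem and remarks that ``the finite form above is then exactly what Corollary~\ref{thm:families} would predict''---so your write-up is, if anything, more detailed than the paper's.

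One cosmetic difference worth flagging: the paper leaves both $k$ and the universal quantifier over internal sets $X$ inside the sentence (using the second sort), obtaining a $\Pi^{\mathbb{N}}_3$ formula, whereas you absorb $k$ and $X$ into the class $\mathcal{M}_k$ and get a $\Sigma^{\mathbb{N}}_2$ sentence. Both packagings work; yours yields a slightly simpler bounded semantics to unfold, while the paper's makes the dependence on $k$ explicit in the functional bound (so that $\tilde e,\tilde c$ are seen to depend only on $k$ and $F$, which is of course also clear from your version since $\sigma_k$ depends only on $k$). Your handling of the monotonicity issue by passing to a majorant $F^*\geq F$ is the standard fix and is exactly what the monotone functional interpretation underlying Section~\ref{sec:bounded} anticipates.
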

The function parameter in this theorem is slightly unusual, but its appearance is unsurprising when we realize that the theorem is proven by showing the following in an ultraproduct:
\begin{theorem}
  Let $G^{\mathcal{U}}$ be an ultraproduct of groups.  For any $k$ there are $e, c$ so that, for any internal set $X\subseteq G^{\mathcal{U}}$ with $|XX^{-1}X|\leq k|X|$ and any $N$, there is a sequence 
\[X_N\subseteq X_{N-1}\subseteq\cdots\subseteq X_1\subseteq X^{-1}XX^{-1}X\]
of internal subsets of $G$ such that $X$ and $X_1$ are $e$-commensurable and, for $1\leq m,n<N$, we have
\begin{itemize}
\item $X_n=X_n^{-1}$,
\item $X_{n+1}X_{n+1}\subseteq X_n$,
\item $X_n$ is contained in the union of $c$ right cosets of $X_{n+1}$,
\item $[X_n,X_m]\subseteq X_k$ for any $k<\min\{N,n+m\}$.
\end{itemize}
\end{theorem}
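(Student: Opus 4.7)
This is the main technical theorem of \cite{hrushovski}, and I would cite it rather than attempt a self-contained proof: the argument uses model-theoretic machinery (Keisler measures on pseudofinite structures, the construction of a locally compact ``Lie model'' associated to an approximate subgroup, and commutator estimates inside that model) that sits well outside the syntactic framework of the present paper. What is worth spelling out, though, is why the statement has exactly the shape needed to feed into Theorem \ref{thm:main}.

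At a high level, from $X$ with $|XX^{-1}X|\leq k|X|$ one constructs a left-translation-invariant Keisler measure on $\langle X\rangle$, runs a stabilizer argument (stability-theoretic in flavor) to obtain a type-definable subgroup $H$ commensurable with $X$, and produces a continuous surjection $H\twoheadrightarrow L$ onto a locally compact group whose identity component is a Lie group. A descending neighborhood base $V_1\supseteq V_2\supseteq\cdots$ of the identity in that Lie group can be chosen with $V_n=V_n^{-1}$, $V_{n+1}V_{n+1}\subseteq V_n$, $[V_n,V_m]\subseteq V_{n+m}$, and with the index $[V_n:V_{n+1}]$ bounded by a constant depending only on the dimension of the Lie group (and hence only on $k$). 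Pulling the $V_n$ back through $H\twoheadrightarrow L$ and realizing the resulting type-definable sets as internal sets by countable saturation of $G^{\mathcal{U}}$ gives the chain $X_1\supseteq\cdots\supseteq X_N$; truncating at $N$ is harmless because the bounds $e,c$ are uniform in $N$.

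The main obstacle is the Lie model construction itself; this is the technical heart of \cite{hrushovski} and has no shortcut. Everything after it --- choosing the neighborhood filtration, pulling back to internal sets, and truncating --- is routine manipulation of internal sets and saturation. The payoff for the present paper is the syntactic observation that the resulting conclusion is $\Pi^{\mathbb{N}}_3$ in $\mathrm{L}_{\omega_1,\omega}$ (universal in $N$, existential in $e,c$ and the chain, first-order in the remaining conditions), with $e$ and $c$ uniform in $N$; it is precisely this uniformity that allows Corollary \ref{thm:families} to transport the statement back to finite groups in the form of the function-parameter theorem quoted above it, and the outer $\forall N$ after the existential block is exactly what forces the unusual function parameter $F$ to appear in that formulation.
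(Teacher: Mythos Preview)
Your proposal is correct and matches the paper's treatment: the paper does not prove this theorem either but simply attributes it to \cite{hrushovski}, then observes that it can be written as a $\Pi^{\mathbb{N}}_3$ sentence so that Corollary \ref{thm:families} recovers the finite function-parameter version. Your added high-level sketch of Hrushovski's Lie-model argument is a reasonable gloss the paper omits; one small point of presentation is that in your parenthetical the quantifier order should read $\forall k\,\exists e,c\,\forall X\,\forall N$ followed by a first-order existential for the finite chain, rather than listing ``universal in $N$'' before ``existential in $e,c$''.
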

We may view this as an $\mathrm{L}_{\omega_1,\omega}$ sentence over a two-sorted language, with one sort for the group and one sort for the subsets.  In the ultraproduct, this second sort is inhabited by the internal subsets of $G$.  We may write this theorem as a $\Pi^{\mathbb{N}}_3$ sentence, and the finite form above is then exactly what Corollary \ref{thm:families} would predict.

\subsection{Szemer\'edi Regularity}\label{ex:regularity}

Another example comes from the ``strong regularity lemma'' \cite{MR1804820,MR2212136}.

We work in a large finite graph $(X,E)$.
\begin{definition}
  When $U,V\subseteq X$, we define the \emph{edge density between $U$ and $V$} by
\[d(U,V)=\frac{|E\cap(U\times V)|}{|U\times V|}.\]

We say $U,V$ are \emph{$\epsilon$-regular} if whenever $U'\subseteq U$, $V'\subseteq V$, and $\frac{|U'|}{|U|}\geq\epsilon$ and $\frac{|V'|}{|V|}\geq\epsilon$, then
\[\left|d(U,V)-d(U',V')\right|<\epsilon.\]
\end{definition}
Roughly speaking, $U,V$ are $\epsilon$-regular if the edges of $E$ are distributed roughly uniformly between them---any reasonably large subsets have about the ``right'' number of edges.

One version of Szemer\'edi's regularity lemma \cite{szemeredi:MR0369312} says
\begin{theorem}
  For each $\epsilon>0$ there is a $K$ so that, for any finite graph $(X,E)$, there is a partition $X=U_1\cup\cdots\cup U_k$ with $k\leq K$ so that
\[\frac{\sum_{i,j\leq k,U_i\text{ and }U_j\text{ are }\epsilon\text{-regular}}|U_i\times U_j|}{|X|^2}\geq(1-\epsilon).\]
\end{theorem}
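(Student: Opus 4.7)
The plan is to express Szemer\'edi's regularity as a $\Pi^{\mathbb{N}}_2$ sentence in $\mathrm{L}_{\omega_1,\omega}$, prove it holds in any ultraproduct of finite graphs, and apply Corollary \ref{thm:families}. We work in a two-sorted language with sorts for vertices and for subsets of vertices, with an edge relation, a membership relation, and countably many predicates comparing cardinalities (e.g., $|A|/|B|\geq p/q$ for each positive rational $p/q$). A finite graph $(X,E)$ becomes a structure with the subset sort interpreted as $\mathcal{P}(X)$; the ultraproduct then has the subset sort inhabited by the internal subsets of $X^{\mathcal{U}}$.

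For each rational $\epsilon>0$ and $k\in\mathbb{N}$, the statement ``there is a partition into at most $k$ pieces such that the regular pairs cover at least a $1-\epsilon$ fraction of ordered pairs'' is a first-order formula $\phi_{\epsilon,k}$: both the existence of the partition and the definition of $\epsilon$-regularity can be expressed using quantifiers over the subset sort and the cardinality predicates. The full theorem is then
\[\sigma := \bigwedge_{\epsilon\in\mathbb{Q}^+}\bigvee_{k\in\mathbb{N}}\phi_{\epsilon,k},\]
a $\Pi^{\mathbb{N}}_2$ sentence. The conclusion we want --- that every finite graph satisfies $\sigma$ with a uniform bound on $k$ depending only on $\epsilon$ --- is exactly what Corollary \ref{thm:families} would produce once we know $\sigma$ holds in every ultraproduct of finite graphs.

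The main step is therefore to show that every such ultraproduct $\mathfrak{M}^{\mathcal{U}}$ satisfies $\sigma$. This is the standard ultraproduct proof of Szemer\'edi regularity: equip the internal $\sigma$-algebra on $X^{\mathcal{U}}$ with the Loeb measure, view the indicator of $E$ as an element of $L^2(X^{\mathcal{U}}\times X^{\mathcal{U}})$, and apply an $L^2$-energy increment argument to produce a finite partition of $X^{\mathcal{U}}$ into Loeb-measurable sets such that most pairs are $\epsilon$-regular in the Loeb sense. Approximate each measurable piece by an internal set using countable saturation; a small loss in the regularity constant can be absorbed by replacing $\epsilon$ with a slightly smaller rational. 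This yields an internal partition witnessing $\phi_{\epsilon,k}$ in $\mathfrak{M}^{\mathcal{U}}$ for some $k$.

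Applying Corollary \ref{thm:families} to the family of all finite graphs then gives, for each $A\in\mathbb{S}^\forall_\sigma$, an $E\in\mathbb{S}^\exists_\sigma$ such that every finite graph satisfies $\sigma^{\leq A,E}$; since $\sigma$ is $\Pi^{\mathbb{N}}_2$, the analysis of Section \ref{sec:applications} reduces $A$ to the choice of $\epsilon$ and $E$ to the bound $K$, recovering the theorem. The main obstacle is the internal approximation in the ultraproduct argument: one must carefully quantify how much regularity can degrade when a Loeb-measurable set is replaced by a nearby internal set, and ensure the two-sorted formalization of $\epsilon$-regularity in the subset sort is preserved under this approximation. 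The energy-increment step itself is routine once the measure-theoretic setup is in place.
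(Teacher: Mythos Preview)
Your proposal is sound but follows a different route from the paper. The paper does not prove Szemer\'edi regularity directly via the bounded semantics; it cites the result and then, in the surrounding discussion, derives it from Tao's stronger regularity lemma (Theorem \ref{thm:tao_regularity}). That lemma is what the paper actually obtains from Corollary \ref{thm:families}: the ultraproduct statement ``$\chi_E$ has approximate projections onto $\mathcal{B}_1^2$'' is $\Pi^{\mathbb{N}}_3$, and its finitary translation is precisely Tao's lemma with its function parameter $F$. Szemer\'edi regularity then falls out as the special case $F(n)=n2^n$, using the standard fact that an irregular pair permits an energy increment.

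Your approach instead encodes Szemer\'edi regularity directly as a $\Pi^{\mathbb{N}}_2$ sentence in the two-sorted language and verifies it in the ultraproduct by a Loeb-measure energy increment followed by internal approximation. This is more economical for the specific statement---you never need the function parameter or the $\Pi^{\mathbb{N}}_3$ analysis---and the $\Pi^{\mathbb{N}}_2$ case of Corollary \ref{thm:families} is just the transfer theorem. The paper's detour through Tao's lemma buys more: it yields the genuinely stronger result from which many regularity variants follow, and it serves the expository purpose of exhibiting a natural $\Pi^{\mathbb{N}}_3$ instance of the bounded semantics, which is the point of that subsection. One remark on your sketch: the cleanest way to handle the approximation step you flag as the main obstacle is to establish near-maximality of the energy first (a quantity stable under small $L^1$ perturbations of the partition, hence under internal approximation) and deduce regularity from that, rather than proving regularity for Loeb-measurable pieces and then trying to preserve it.
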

Roughly speaking, this says that most points $(x,y)$ belong to a rectangle $U_i\times U_j$ which is $\epsilon$-regular.  There are many variants (for instance, many versions also require that the $U_i$ all have almost exactly the same size, perhaps at the cost of one additional partition piece $U_0$), but these versions can all be derived from each other with a small amount of additional effort.

Many variants of the regularity lemma (such as the ``strong'' regularity lemma \cite{MR1804820}) can be derived from a more general theorem showing that there are always partitions with ``nearly maximal energy''.

\begin{definition}
  If $X=U_1\cup\cdots\cup U_k$ and $X=V_1\cup\cdots\cup V_m$, we say $\{V_j\}_{j\leq m}$ \emph{refines} $\{U_i\}_{i\leq k}$ if, for each $j\leq m$, there is an $i\leq k$ with $V_j\subseteq U_i$.

  The \emph{energy} of a partition $X=U_1\cup\cdots\cup U_k$ is 
\[\mathcal{E}(\{U_i\}_{i\leq k})=\sum_{i,j\leq k}d(U_i,U_j)^2|U_i|\cdot|U_j|.\]
\end{definition}

It is not hard to see that $0\leq\mathcal{E}(\{U_i\}_{i,j\leq k})\leq 1$ and a Cauchy-Schwarz argument shows that when $\{V_j\}_{j\leq m}$ refines $\{U_i\}_{i\leq k}$, $\mathcal{E}(\{U_i\}_{i\leq k})\leq\mathcal{E}(\{V_j\}_{j\leq m})$.

\begin{theorem}\label{thm:tao_regularity}
  For every $\epsilon>0$ and every $F:\mathbb{N}\rightarrow\mathbb{N}$, there is a $K$ so that, for any finite graph $(X,E)$, there is a partition $X=U_1\cup\cdots\cup U_k$ with $k\leq K$ so that whenever $X=V_1\cup\cdots\cup V_m$ is a partition refining $\{U_i\}_{i\leq k}$ with $m\leq F(k)$,
\[\mathcal{E}(\{V_j\}_{j\leq m})<\mathcal{E}(\{U_i\}_{i\leq k})+\epsilon.\]
\end{theorem}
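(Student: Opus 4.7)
The plan is to realize Theorem \ref{thm:tao_regularity} as the unpacking of Corollary \ref{thm:families} applied to a $\Sigma^{\mathbb{N}}_2$ sentence $\sigma_\epsilon$ about graphs whose ultraproduct truth is an easy energy-increment argument. I would work in a two-sorted language with one sort for vertices (carrying the edge relation) and one sort for subsets (with a membership predicate), together with countably many first-order relation symbols encoding the rational inequalities needed to state, for any fixed number of subsets, that a list of subsets is a partition, that one list of subsets refines another, and that $\mathcal{E}(\{V_j\}) < \mathcal{E}(\{U_i\}) + q$ for any rational $q$. Each finite graph $(X,E)$ is interpreted by letting the subset sort be the full power set, so in the ultraproduct the subset sort is populated by the internal subsets and these density/refinement predicates recover the Loeb-measure values of $\mathcal{E}$ and the usual notion of refinement.

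For each rational $\epsilon>0$ I would then write
\[
\sigma_\epsilon \;=\; \bigvee_{k\in\mathbb{N}} \exists U_1\cdots\exists U_k\,\Bigl[\mathrm{Part}(U)\wedge \bigwedge_{m\in\mathbb{N}} \forall V_1\cdots \forall V_m\,\bigl(\mathrm{Part}(V)\wedge V\succeq U \rightarrow \mathcal{E}(V) < \mathcal{E}(U)+\epsilon\bigr)\Bigr].
\]
This is $\Sigma^{\mathbb{N}}_2$ over a first-order matrix, so by the discussion in the \emph{Interpreting the Definition} subsection an element of $\mathbb{S}^\forall_{\sigma_\epsilon}$ is (after the constant-on-$\dom$ reduction described there) a single monotone function $F\colon\mathbb{N}\rightarrow\mathbb{N}$, while $\mathbb{S}^\exists_{\sigma_\epsilon}$ is $\mathbb{N}$. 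Thus the $\mathbb{S}^\forall$-bounded form of $\sigma_\epsilon$ guaranteed by Corollary \ref{thm:families} is precisely the statement of Theorem \ref{thm:tao_regularity}: for every $F$ there is a uniform $K$ so that every finite graph admits $k\leq K$ and a partition $\{U_i\}_{i\leq k}$ whose refinements of size $\leq F(k)$ only increase the energy by less than $\epsilon$.

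It remains to verify that $\mathfrak{G}^{\mathcal{U}}\vDash \sigma_\epsilon$ for every ultraproduct $\mathfrak{G}^{\mathcal{U}}$ of finite graphs in our language. This is the standard energy increment argument: starting from the trivial partition $\{X^{\mathcal{U}}\}$, iterate the rule ``if the current partition admits a refinement into \emph{finitely} many internal subsets whose energy exceeds the current energy by at least $\epsilon$, replace it by that refinement, else stop.'' Since $0\leq\mathcal{E}\leq 1$ and each step increases the energy by at least $\epsilon$, the process halts in at most $\lceil 1/\epsilon\rceil$ stages; the terminal partition has standardly many pieces and witnesses $\sigma_\epsilon$. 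Applying Corollary \ref{thm:families} with $\mathcal{M}$ the class of all finite graphs (in this language) then gives Theorem \ref{thm:tao_regularity}; the quantification over $\epsilon$ can be absorbed either by applying the corollary once per $\epsilon$ or, uniformly, to $\bigwedge_{\epsilon\in\mathbb{Q}_{>0}}\sigma_\epsilon$.

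The main obstacle is bookkeeping rather than mathematics: choosing the first-order relation symbols for densities and cardinality comparisons so that in the ultraproduct they compute the Loeb-theoretic $\mathcal{E}$ exactly as used in the energy-increment step, and so that the partition produced by that step really is a tuple from the subset sort (i.e.\ internal). Once this language is in place, everything else is either an appeal to Corollary \ref{thm:families} or the standard one-paragraph Cauchy--Schwarz/bounded-energy argument that terminates the refinement process.
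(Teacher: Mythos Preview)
Your proposal is correct and takes essentially the same route as the paper: both recognize Tao's regularity lemma as precisely the bounded form, via Corollary~\ref{thm:families}, of the $\mathrm{L}_{\omega_1,\omega}$ statement that for each $\epsilon$ there is a finite partition into internal sets past which no refinement raises the energy by $\epsilon$. The only cosmetic differences are that the paper keeps the $\epsilon$-quantifier inside (calling the sentence $\Pi^{\mathbb{N}}_3$ rather than your per-$\epsilon$ $\Sigma^{\mathbb{N}}_2$, which makes your appeal to the ``constant-on-$\dom$'' reduction unnecessary) and verifies truth in the ultraproduct by invoking the conditional expectation $\mathbb{E}(\chi_E\mid\mathcal{B}_1^2)$ rather than your equivalent direct energy-increment iteration.
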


The usual Szemer\'edi's Theorem follows by taking $F(n)=n2^n$ and showing that whenever $(U_i,U_j)$ fails to be regular, we can split $U_i$ and $U_j$ into two sets $U_i=U_i^+\cup U_i^-$ and $U_j=U_j^+\cup U_j^-$ in such a way that the energy increases by an amount on the order of $\epsilon^4|U_i\times U_j|$.  Other variants follow with different choices of $F$.

The presence of the function $F$ suggests that there should be a corresponding fact in an ultraproduct.  We work in a language with a binary relation $\mathbf{E}$ whch will represent the edges of a graph. 



Consider a sequence of finite graphs $G_n=(X_n,E_n)$ with $|X_n|\rightarrow\infty$.  Let $\mathfrak{M}$ be the ultraproduct; we have an edge relation $E$ on $X=|\mathfrak{M}|$ and on each set $X^n$ we have the Loeb measure $\mu^n$ (see \cite{MR1643950} for details). 

In the ultraproduct, the set $E$ is a measurable subset of $X^2$.  We have two $\sigma$-algebras on $X^2$---the $\sigma$-algebra $\mathcal{B}_2$ given directly by the Loeb measure, and the $\sigma$-algebra $\mathcal{B}_1^2$ given by taking the power of the $\sigma$-algebra $\mathcal{B}_1$ on $X$.  The $\sigma$-algebra $\mathcal{B}_1^2$ is generated by rectangles $B\times C$ where $B,C\in\mathcal{B}_1$.

Standard facts about Loeb measure ensure that $\mathcal{B}_1^2\subseteq\mathcal{B}_2$, so we can consider the \emph{projection} of $E$ onto $\mathcal{B}_1^2$: $\mathbb{E}(\chi_E\mid\mathcal{B}_1^2)$ is the function which is measurable with respect to $\mathcal{B}_1^2$ which minimizes the distance $||\chi_E-\mathbb{E}(\chi_E\mid\mathcal{B}_1^2)||_{L^2}$.

The actual existence of the function $\mathbb{E}(\chi_E\mid\mathcal{B}_1^2)$ is an abstract measure-theoretic fact that we cannot express even in $\mathrm{L}_{\omega_1,\omega}$.  But we can express the next best thing: that $\mathbb{E}(\chi_E\mid\mathcal{B}_1^2)$ can be approximated by the projections of $\chi_E$ onto finite sub-algebras.  That is, for each $\epsilon>0$ there is a finite algebra $\mathcal{B}\subseteq\mathcal{B}_1^2$ so that for any other finite algebra $\mathcal{B}'\subseteq\mathcal{B}_1^2$,
\[||\mathbb{E}(\chi_E\mid\mathcal{B}')||_{L^2}<||\mathbb{E}(\chi_E\mid\mathcal{B})||_{L^2}+\epsilon.\]

By standard manipulations in probability theory, we can restrict ourselves to the case where $\mathcal{B}$ has the form $\{U_1,\ldots,U_k\}\times\{U_1,\ldots,U_k\}$, and where we only consider $\mathcal{B}'$ refining $\mathcal{B}$ (by replacing $\mathcal{B}'$ with the common refinement of $\mathcal{B}$ and $\mathcal{B}'$).  Finally, observing that $||\mathbb{E}(\chi_E\mid\mathcal{B})||_{L^2}^2$ is precisely $\mathcal{E}(\{U_i\}_{i\leq k})$, we see that we are considering the statement:
\begin{quote}
  For each $\epsilon>0$ there is a partition $X=U_1\cup\cdots\cup U_k$ such that whenever $X=V_1\cup\cdots\cup V_m$ refines $\{U_i\}_{i\leq k}$,
\[\mathcal{E}(\{V_j\}_{j\leq m})^2<\mathcal{E}(\{U_i\}_{i\leq k})^2+\epsilon.\]
\end{quote}
Technically we need to add some predicates to our language making it possible to write down formulas about the measure; see \cite{goldbring:_approx_logic_measure} for one way to do this.  We also need to add a sort for internal subsets of $X$.  After these tweaks to the language, this is a $\Pi^{\mathbb{N}}_3$ sentence.  Stated like this, we see that the strong regularity lemma is precisely what Corollary \ref{thm:families} predicts.

\subsection{Hypergraph Regularity}

Building on the work in the previous subsection, consider an ultraproduct of finite $d$-ary hypergraphs: $\mathfrak{M}=(X,H)$ with $H\subseteq X^d$, where the Loeb measure gives us measures $\mu^d$ on each $X^d$, and assume $\mu^d(H)>0$.

The Loeb measure gives a $\sigma$-algebra $\mathcal{B}_d$ on $X^d$ containing all internal subsets.  But for any $r<d$ we can define a $\sigma$-algebra $\mathcal{B}_{d,r}$ consisting of sets of the form
\[\{(x_1,\ldots,x_d)\mid\forall s\in {d\choose r} \vec x_s\in B_s\}\]
where each $B_s\in\mathcal{D}_r$.  Then $\mathcal{B}_1^d=\mathcal{B}_{d,1}\subseteq\mathcal{B}_{d,2}\subseteq\cdots\subseteq\mathcal{B}_{d,d}=\mathcal{B}_d$, and in general these inclusions are strict.

We can describe a series of projections of $\chi_H$.  For notational simplicity, consider the case where $d=3$:
\begin{theorem}
  For any $\epsilon>0$ there is a finite $\mathcal{B}\subseteq\mathcal{B}_{3,2}$ so that
\begin{itemize}
\item for any $\mathcal{B}'\subseteq\mathcal{B}_{3,2}$,
\[||\mathbb{E}(\chi_H\mid\mathcal{B}')||_{L^2}<||\mathbb{E}(\chi_H\mid\mathcal{B})||_{L^2}+\epsilon,\]
and
\item for $\epsilon'>0$ there is a $\mathcal{C}\subseteq\mathcal{B}_{2,1}$ so that, for each $B\in\mathcal{B}$ and any $\mathcal{C}'\subseteq\mathcal{B}_{2,1}$,
\[||\mathbb{E}(\chi_C\mid\mathcal{C}')||_{L^2}<||\mathbb{E}(\chi_C\mid\mathcal{C})||_{L^2}+\epsilon'.\]
\end{itemize}
\end{theorem}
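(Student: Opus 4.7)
The plan is to prove this as a purely measure-theoretic fact about the single ultraproduct $\mathfrak{M}$, using iterated Hilbert space projection rather than the $\mathrm{L}_{\omega_1,\omega}$ machinery developed in the body of the paper. The key lemma, to be applied twice, is: for any sub-$\sigma$-algebra $\mathcal{F}$ of a probability space, any $f\in L^2$, and any $\delta>0$, there is a finite subalgebra $\mathcal{B}\subseteq\mathcal{F}$ such that $||\mathbb{E}(f\mid\mathcal{B}')||_{L^2}<||\mathbb{E}(f\mid\mathcal{B})||_{L^2}+\delta$ for every $\mathcal{B}'\subseteq\mathcal{F}$. This combines two standard facts: the contraction inequality $||\mathbb{E}(f\mid\mathcal{B}')||_{L^2}\leq||\mathbb{E}(f\mid\mathcal{F})||_{L^2}$, obtained from the tower property $\mathbb{E}(f\mid\mathcal{B}')=\mathbb{E}(\mathbb{E}(f\mid\mathcal{F})\mid\mathcal{B}')$, together with density of simple functions measurable with respect to some finite subalgebra of $\mathcal{F}$ in $L^2(\mathcal{F})$, which yields a finite $\mathcal{B}$ with $||\mathbb{E}(f\mid\mathcal{B})||_{L^2}>||\mathbb{E}(f\mid\mathcal{F})||_{L^2}-\delta$.

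The proof then proceeds in two steps. First I would apply this lemma to $f=\chi_H$, $\mathcal{F}=\mathcal{B}_{3,2}$, and $\delta=\epsilon$, producing the finite $\mathcal{B}$ required by the first bullet. Second, given this $\mathcal{B}$, I would apply the same lemma separately to each $\chi_B$ for $B\in\mathcal{B}$, with $\mathcal{F}=\mathcal{B}_{2,1}$ and $\delta=\epsilon'$, obtaining a finite $\mathcal{C}_B\subseteq\mathcal{B}_{2,1}$ for each. To satisfy the second bullet with a single $\mathcal{C}$, I would take $\mathcal{C}$ to be the subalgebra generated by $\bigcup_{B\in\mathcal{B}}\mathcal{C}_B$; this is still finite because $\mathcal{B}$ is finite. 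Refinement gives $||\mathbb{E}(\chi_B\mid\mathcal{C})||_{L^2}\geq||\mathbb{E}(\chi_B\mid\mathcal{C}_B)||_{L^2}$, and combining this with the contraction bound $||\mathbb{E}(\chi_B\mid\mathcal{C}')||_{L^2}\leq||\mathbb{E}(\chi_B\mid\mathcal{B}_{2,1})||_{L^2}$ for arbitrary $\mathcal{C}'\subseteq\mathcal{B}_{2,1}$ yields the required inequality for every $B\in\mathcal{B}$ simultaneously.

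The main technical obstacle is the density input—that elements of $L^2(\mathcal{B}_{3,2})$ can be approximated by simple functions measurable with respect to \emph{finite} subalgebras of $\mathcal{B}_{3,2}$—since Loeb-type algebras need not be countably generated. I would argue directly from the generating description: $\mathcal{B}_{3,2}$ arises from the explicit algebra of sets of the form $\{\vec x:\vec x_s\in B_s\text{ for all }s\in\binom{3}{2}\}$ with $B_s\in\mathcal{B}_{2,1}$, and the standard fact that simple functions over a generating algebra are $L^2$-dense in the generated $\sigma$-algebra then supplies what is needed. The same argument one level down handles $\mathcal{B}_{2,1}$. Everything else is bookkeeping around the tower property, so once this density step is in hand the two-step iteration above closes the proof.
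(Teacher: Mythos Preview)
Your proposal is correct, and it is essentially what the paper has in mind. The paper does not give a proof of this theorem at all: it states it as a standard measure-theoretic fact about the ultraproduct (compare the analogous discussion in the preceding subsection, where the approximability of $\mathbb{E}(\chi_E\mid\mathcal{B}_1^2)$ by projections onto finite subalgebras is explicitly described as ``an abstract measure-theoretic fact''). Your two-step iteration---first approximate $\mathbb{E}(\chi_H\mid\mathcal{B}_{3,2})$ by a finite $\mathcal{B}$, then for each atom approximate the projection onto $\mathcal{B}_{2,1}$ by a finite $\mathcal{C}_B$, then pass to the common refinement---is exactly the intended argument.

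One small simplification: the density step you flag as the ``main technical obstacle'' is not actually an obstacle. For \emph{any} sub-$\sigma$-algebra $\mathcal{F}$ of a probability space, simple $\mathcal{F}$-measurable functions are dense in $L^2(\mathcal{F})$, and each such simple function is measurable with respect to the finite subalgebra generated by its level sets. No countable generation and no special description of $\mathcal{B}_{3,2}$ or $\mathcal{B}_{2,1}$ is needed; the projection $\mathbb{E}(f\mid\mathcal{B})$ is then within $\delta$ of $\mathbb{E}(f\mid\mathcal{F})$ because it is the closest $\mathcal{B}$-measurable function in $L^2$. So the generating-algebra argument you sketch is unnecessary, and the proof is even more routine than you suggest.
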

After some work combining conjunctions to get this statement in a prenex form, we get a $\Pi^{\mathbb{N}}_5$ sentence.  Theorem \ref{thm:families} gives a corresponding theorem about finite partitions in finite graphs, though it will clearly be quite complicated.  When $d>3$, the statement becomes even more complicated: the analogous statement for $d$-ary hypergraphs will be $\Pi^{\mathbb{N}}_{2d-1}$.

A special case (analogous to the way Szemer\'edi regularity follows from a special case of strong regularity) is known as hypergraph regularity \cite{MR2373376,rodl:MR2069663}; the statement is quite complicated, which is not surprising given that it corresponds to a fairly complicated $\mathrm{L}_{\omega_1,\omega}$ formula.

\subsection{Hilbertianity}

An example of how these results apply to non-prenex formulas is given by the Gilmore-Robinson characterization of Hilbertian fields.

\begin{definition}
  When $K$ is a field, $\vec T$ is a set of variables, $f_1(\vec T,X),\ldots,f_m(\vec T,X)$ are polynomials with coefficients in $K(\vec T)$ which are irreducible in $K(\vec T)[X]$, and $g$ is a polynomial in $K[\vec T]$, $H_K(f_1,\ldots,f_m;g)$ is the set of $\vec a\in K$ suc that $g(\vec a)\neq 0$ and each $f_i(\vec a,X)$ is defined and irreducible in $K[X]$.

A field is \emph{Hilbertian} if every set $H_K(f_1,\ldots,f_m;g)$ is nonempty.
\end{definition}
Hilbertianity is incompatible with algebraic closure except in trivial cases: roughly speaking, in a Hilbertian field, polynomials which always factor must factor uniformly.  That is, if, for every $a\in K$, the polynomial $f(a,X)\in K[X]$ factors, then actually $f(Y,X)\in K(Y)[X]$ must factor as well.

The Gilmore-Robinson characterization of Hilbertianity, restricted (for convenience) to characteristic $0$, says:
\begin{theorem}[\cite{MR0072088}]
  When $K$ has characteristic $0$, $K$ is Hilbertian if and only if there is an $t\in K^{\mathcal{U}}\setminus K$ such that $\overline{K(t)}\cap K^\mathcal{U}=K(t)$.
\end{theorem}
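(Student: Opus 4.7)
The plan is to encode the Gilmore--Robinson condition as an $\mathrm{L}_{\omega_1,\omega}$ sentence $\sigma$ and then apply Corollary~\ref{thm:families} with $\mathcal{M}=\{K\}$, thereby translating the ultraproduct condition into a bounded statement about $K$ that unpacks into Hilbertianity. Working in the language of rings augmented by a constant $c_a$ for each $a\in K$ (assume $K$ countable; otherwise reduce to a countable subfield containing all coefficients of interest), set
\[\sigma \;=\; \exists t\,\Bigl[\bigwedge_{a\in K}(t\neq c_a)\;\wedge\;\forall s\,\bigl(\mathrm{alg}_{K(t)}(s)\to \mathrm{rat}_{K(t)}(s)\bigr)\Bigr],\]
where $\mathrm{alg}_{K(t)}(s)$ is the $\Sigma^{\mathbb{N}}_1$ disjunction $\bigvee_{n,d,\vec c}\bigl(\sum c_{ij}t^is^j=0\wedge \sum c_{ij}Y^iX^j\not\equiv 0\text{ in }X\bigr)$ (over $\vec c\in K^{(n+1)(d+1)}$), and $\mathrm{rat}_{K(t)}(s)$ is the analogous $\Sigma^{\mathbb{N}}_1$ formula asserting $s=p(t)/q(t)$ for some $p,q\in K[Y]$ with $q(t)\neq 0$.

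For the direction ``$K$ Hilbertian $\Rightarrow$ witness exists'', I would enumerate the countably many irreducible $h(Y,X)\in K(Y)[X]$ of positive $X$-degree, together with nonzero $g\in K[Y]$, and use Hilbertianity (plus the standard fact that finite intersections of Hilbert sets are Hilbert and hence nonempty) to build diagonally a sequence $\langle a^i\rangle\subseteq K$ so that for every such $h,g$ the set $\{i:h(a^i,X)\text{ is irreducible in }K[X]\text{ and }g(a^i)\neq 0\}$ is cofinite, hence in every non-principal $\mathcal{U}$. Then $t=\langle a^i\rangle_{\mathcal{U}}$ is transcendental over $K$, and by \L{o}\'s's theorem each $h(t,X)$ is irreducible in $K^{\mathcal{U}}[X]$; consequently any $s\in K^{\mathcal{U}}$ algebraic over $K(t)$ must already lie in $K(t)$, since a nontrivial algebraic witness would give a nontrivial factor of $h(t,X)$ in $K^{\mathcal{U}}[X]$ (characteristic zero guarantees separability, so irreducibility in $K(Y)[X]$ transfers along $K(t)\cong K(Y)$). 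Hence $\overline{K(t)}\cap K^{\mathcal{U}}=K(t)$.

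For the converse, assume $K^{\mathcal{U}}\vDash\sigma$; by Corollary~\ref{thm:families}, for every $A\in\mathbb{S}^\forall_\sigma$ there is $E\in\mathbb{S}^\exists_\sigma$ with $K\vDash^{\leq A,E}\sigma$. The key observation is that $\sigma$ forces $h(t,X)$ to be irreducible in $K^{\mathcal{U}}[X]$ for every irreducible $h(Y,X)\in K(Y)[X]$: a nontrivial factor would produce $s\in K^{\mathcal{U}}$ algebraic over $K(t)$ but not in $K(t)$, contradicting $\sigma$. Since irreducibility of $h_1(t,X),\ldots,h_m(t,X)$ together with $g(t)\neq 0$ is first-order in $t$, \L{o}\'s delivers $a\in K$ with the same property, i.e., $a\in H_K(h_1,\ldots,h_m;g)$. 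The bounded semantics strengthens this to an effective Hilbertianity: choosing $A\in\mathbb{S}^\forall_\sigma$ to package a given Hilbert tuple at a specified degree produces via $E$ explicit complexity bounds on the witness $a$.

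The principal obstacle is the explicit matching of $A$ to the factorization test in the converse: one must choose the functional $A$---essentially a map from bounds on algebraic relations $\sum c_{ij}t^is^j=0$ to bounds on the degree of the rational presentation $s=p(t)/q(t)$, reflecting the $\exists t\,\forall s\,(\Sigma^{\mathbb{N}}_1\to\Sigma^{\mathbb{N}}_1)$ shape of $\sigma$---so that the bounded semantics precisely encodes irreducibility of each $h_j(a,X)$ in $K[X]$. Characteristic zero keeps every relevant extension separable, so this encoding is clean; the remaining work is routine book-keeping through the definitions of $\mathbb{P}^Q,\mathbb{F}^Q,\mathbb{S}^Q$ and the verification that the desired functional is realized as a genuine element of $\mathbb{S}^\forall_\sigma$.
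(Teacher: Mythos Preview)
The paper does not prove this theorem; it is quoted as the classical Gilmore--Robinson result (the citation \cite{MR0072088}) and then used only as raw material: the paper rewrites the right-hand side as an $\mathrm{L}_{\omega_1,\omega}$ sentence and applies Corollary~\ref{thm:sentence} to extract a bounded reformulation. There is no proof in the paper to compare against.

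That said, the classical portions of your sketch are essentially the standard argument and are correct. For the forward direction, the diagonal construction of $\langle a^i\rangle$ works once you also arrange the $a^i$ to be distinct (Hilbert sets in an infinite field are infinite), so that $t\notin K$. For the converse, the key step you identify---that a nontrivial factorization of $f_j(t,X)$ in $K^{\mathcal{U}}[X]$ would have coefficients algebraic over $K(t)$ and lying in $K^{\mathcal{U}}$, hence in $K(t)$, contradicting irreducibility over $K(t)$---is exactly right, and \L o\'s then produces $a\in H_K(f_1,\ldots,f_m;g)$.

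Where your proposal goes astray is in trying to route the converse through the bounded semantics. First, Corollary~\ref{thm:families} is the wrong tool: it concerns $\sigma$ holding in \emph{every} ultraproduct from $\mathcal{M}$, whereas the hypothesis hands you a single $\mathcal{U}$; the relevant statement would be Corollary~\ref{thm:sentence}. Second, and more importantly, the bounded machinery is unnecessary here and does not do the work you assign it. The paper's direction of use is the opposite of yours: it \emph{assumes} Gilmore--Robinson and then applies Corollary~\ref{thm:sentence} to read off what the ultraproduct condition says about $K$ in bounded form. Showing that this bounded form ``unpacks into Hilbertianity'' would itself require essentially the classical argument you already gave, so invoking the corollary gains nothing toward proving the theorem. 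Your direct \L o\'s argument already closes the converse; drop the bounded-semantics overlay.
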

When $K$ is countable, this becomes a formula of $\mathrm{L}_{\omega_1,\omega}$.  We take the language to be the language of $K$-rings---that is, the language of rings together with a constant symbol for each element of $k$.  Then the statement that $t\in K^{\mathcal{U}}\setminus K$ becomes 
\[\exists t\bigwedge_{k\in K}(t\neq k).\]

There are only countably many elements of $K(t)$, so we can quantify over them using a $\bigwedge$ quantifier: the statement $\overline{K(t)}\cap K^\mathcal{U}=K(t)$ becomes
\[\bigwedge_{p\in K(y)[x]}\forall x\, p(t,x)=0\rightarrow \bigvee_{u\in K(y)}x=u(t).\]
Putting these together, we get the formula
\[\exists t\left[\bigwedge_{k\in K}(t\neq k)\wedge \bigwedge_{p\in K(y)[x]}\forall x\left(p(t,x)=0\rightarrow \bigvee_{u\in K(y)}x=u(t)\right)\right].\]
This behaves roughly like a $\Sigma^{\mathbb{N}}_3$ sentence.

Let $\mathcal{S}$ consist of finite subsets of $K$.  By Corollary \ref{thm:sentence} (with the sequence $\mathfrak{M}_i=K$ for all $i$) together with a little work interpreting the result, we get:
\begin{quote}
  For every function $Q:((\mathcal{S}\times\mathbb{N})\rightarrow\mathbb{N})\rightarrow(\mathcal{S}\times\mathbb{N})$ which is continuous (where $\mathcal{S}\times\mathbb{N}$ and $\mathbb{N}$ have the discrete topology and the topology on $(\mathcal{S}\times\mathbb{N})\rightarrow\mathbb{N}$ is generated by sub-basic sets of the form $\{F\mid F(S,n)=m\}$), there is an $F:(\mathcal{S}\times\mathbb{N})\rightarrow\mathbb{N}$ such that, taking $Q(F)=(S,n)$, there is a $t\in K\setminus S$ so that for any $p\in K[t^{-1},t,x]$ of degree at most $n$, if there is an $x\in K$ with $p(x)=0$ then there is a $u\in K[t^{-1},t]$ of degree at most $F(S,n)$ with $p(u)=0$.
\end{quote}

\subsection{Other Examples}

The technique of this paper has been used in several places to obtain constructive or explicit information from proofs which use ultraproducts.  As the other examples show, the case of $\Pi^{\mathbb{N}}_3$ sentences has been investigated independently several times.  Therefore these techniques are mostly useful in proofs whose intermediate steps involve sentences of higher complexity.

In \cite{MR3643744}, the author applied this technique to a statement involving showing that two double limits are equal---that
\[\lim_m\lim_k a_{m,k}=\lim_k\lim_m a_{m,k}\]
for a particular sequence $a_{m,k}$.  At its fullest complexity, this is
\begin{quote}
  For every $\epsilon>0$ there are $m$ and $k$ so that for all $m'>m$, $k'>k$ there are $l$ and $n$ so that for all $l'>l$ and $n'>n$, $|a_{m',l'}-a_{n',k'}|<\epsilon$
\end{quote}
which is $\Pi^{\mathbb{N}}_5$.  In fact, \cite{MR3643744} analyzes the slightly less complicated $\Pi^{\mathbb{N}}_4$ statement 
\begin{quote}
  For every $\epsilon>0$, $m$, and $k$, there are $m'>m$ and $k'>k$ so that for every $l$ and $n$ there are $l'>l$ and $n'>$ so that $|a_{m',l'}-a_{n',k'}|<\epsilon$
\end{quote}
which has one fewer connective to deal with and suffices for the intended application \cite{towsner:banach}.  (Nonetheless, the proof goes through sentences of higher complexity.)

In \cite{1609.07509}, the Simmons and the author applied this technique to a result in differential algebra \cite{HTKM}.  The main result of \cite{HTKM} is $\Pi^{\mathbb{N}}_2$, so the final bounds are of the form ``for every $n$ and $d$ in $\mathbb{N}$ there is a $b$ in $\mathbb{N}$'', but various intermediate steps have higher complexity.

In \cite{MR3846327}, Goldbring, Hart, and the author apply a variant of these methods to sentences in continuous logic: in \cite{MR3694564}, Boutonnet, Chifan, and Ioana show that certain $\Pi_1$ factors are not elmentarily equivalent by showing that their ultraprowers are non-isomorphic, but without constructing actual sentences.  Goldbring and Hart \cite{MR3704741} analyzed the complexity of the sentences distinguishing these $\Pi_1$ factors, but were not able to identify the precise sentences.  The analysis in \cite{1701.07928} proceeds by noting that the properties distinguishing the ultrapowers are expressible in $\mathrm{L}_{\omega_1,\omega}$ (more precisely, some continuous variant of it), and then using techniques inspired by this paper to translate those into continuous sentences distinguishing the factors.

The translation given here is \emph{not} designed for continuous logic, and the sentences it produced required a substantial amount of tweaking by hand to get correct arguments.
\begin{question}
  What is the correct semantics to give a version of Theorem \ref{thm:ctbl_sat} for continuous logic?
\end{question}

\printbibliography
\end{document}